\documentclass[letter,11pt]{article}
\usepackage[top=1in, bottom=1in, left=1in, right=1in]{geometry}
\usepackage{amssymb}
\usepackage{amsthm}
\usepackage{amsfonts}
\usepackage{amsmath}
\usepackage{bbm}
\usepackage{abstract}
\usepackage{color}

\newtheorem{theorem}{Theorem}[section]
\newtheorem{lemma}[theorem]{Lemma}
\newtheorem{prop}[theorem]{Proposition}

\theoremstyle{definition}

\newtheorem{remark}[theorem]{Remark}

\newcommand{\Ric}{{\rm Ric}}

\newcommand{\ddbar}{\partial\bar\partial}

\usepackage{todonotes}

\usepackage{hyperref}
\hypersetup{
    bookmarks=true,         
    unicode=false,          
    pdftoolbar=true,       
    pdfmenubar=true,       
    pdffitwindow=false,    
    pdfstartview={FitH},      
    pdfauthor={T. Darvas, K.-R. Wu},     
    colorlinks=true,       
   linkcolor=black,          
    citecolor=black,        
    filecolor=black,      
    urlcolor=black}           

\title{Griffiths extremality, interpolation of norms, and K\"ahler quantization }
\author{Tam\'as Darvas and Kuang-Ru Wu}
\date{\vspace{-0.2in}}

\begin{document}
\maketitle
\begin{abstract} Following Kobayashi, we consider Griffiths negative complex Finsler bundles, naturally leading us to introduce Griffiths extremal Finsler metrics. As we point out, this notion is closely related to the theory of interpolation of norms, and is characterized by an equation of complex Monge--Amp\`ere type, whose corresponding Dirichlet problem we solve.  As applications, we prove that Griffiths extremal Finsler metrics quantize solutions to a natural PDE in K\"ahler geometry, related to the construction of flat maps for the Mabuchi metric.
\end{abstract}

\section{Introduction}

Let $\pi: E \to Y$ be a complex vector bundle of rank $r$ over a complex manifold $Y$ of dimension $m$. We say that $(E,f)$ is a (complex) \emph{Finsler bundle} with $f$ being the \emph{Finsler metric} if $f(y,\xi) \geq 0$ for all $y \in Y$ and $\xi \in E_y$, with $f(y,\xi) =0$ if and only if $\xi =0$. Moreover, $f(y, \lambda \xi) = |\lambda| f(y,\xi)$ for all $y \in Y,\xi \in E_y$ and $\lambda \in \Bbb C$. In case  $f(y, \cdot)$ also satisfies the triangle inequality on each $E_y$, then $f$ is a \emph{Finsler norm}. As usual, if $f(y, \cdot)$ is induced by a quadratic form, then $f$ is a Hermitian metric. We do not assume any regularity at this point. Later on, suitable regularity will be imposed on the Finsler metric $f$.

Despite their ubiquity, complex geometers often consider Finsler metrics too general to be relevant, with the special case of Hermitian metrics receiving most of the attention in the literature. Somewhat reversing this trend, the main purpose of the present article is to show that Finsler metrics arise naturally in the quantization of K\"ahler metrics that are solutions to canonical PDEs in the subject. Indeed, it seems that many times there are not enough Hermitian metrics to quantize such solutions.

Broadly speaking, we point out that interpolation methods for Euclidean norms going back to Rochberg \cite{Ro84},  Slodkowski \cite{Sl1,Sl2,Sl3}, Coifman--Semmes \cite{CS93}, and more recently Berndtsson--Cordero--Erausquin--Klartag--Rubinstein \cite{BCKR16} can be adapted to our Finsler setting, naturally leading us to Griffiths extremality. More importantly, we show that Griffiths extremal Finsler metrics quantize the solution to a natural complex Monge--Amp\`ere equation in K\"ahler geometry considered by Berman--Demailly and Darvas--Rubinstein \cite{BD09,DR16}, closely related to the notion of flatness in Mabuchi geometry. The link is provided by Berndtsson's theorem for positivity of direct images \cite{Brn09}, and classical methods of Kobayashi on Finsler metrics \cite{Ko75}.

\vspace{-0.1in}\paragraph{Griffiths negativity/extremality and interpolation of norms.} Before we address K\"ahler quantization, let us review Griffiths negativity of Finsler bundles and introduce Griffiths extermality, pointing out connections to interpolation of Euclidean norms at the end. 
Complex Finsler bundles were considered by Kobayashi \cite{Ko75}, who was motivated by the Griffiths conjecture on the relationship between ampleness and positivity of vector bundles \cite{Gr69} (for  recent progress see \cite{Brn09, MT07, FLW09}, and references within). For a detailed study of complex Finsler geometry we refer to the survey \cite{Wo07} and the book \cite{Ko14}, whose terminology we follow. 

In this paper, a Finsler bundle $\pi: (E,f) \to Y$ is said to be \emph{Griffiths negative} if $f$ is plurisubharmonic (psh) on the total space of the bundle $E$. With slight abuse of precision, this means that $f$ is upper semi-continuous (usc) and $i\ddbar f \geq 0$ on $E$, in the sense of currents. As is well known, in case $f$ is a smooth Hermitian metric this definition recovers the usual definition of Griffiths negativity \cite[Chapter VII]{De12}. For more details and comparison with Kobayashi's papers \cite{Ko75, Ko96}, see Section \ref{subsec griff neg of finsler}. 

One may ask, which are the least Griffiths negative Finsler metrics? As we are dealing with plurisubharmonicity, one is naturally led to the conditions  $i\ddbar f \geq 0$ together with $\dim \textup{Ker } i\ddbar f \geq 1$. This would be fine for smooth $f$, however in our case $f$ is only psh, hence it is more precise to ask that $f$ satisfies the complex Monge--Amp\`ere equation $(i\ddbar f)^{m+r} =0$ on $E$ in the sense of Bedford--Taylor \cite{BT76}. Griffiths negative metrics $f$ satisfying this equation will be called \emph{Griffiths extremal}.

We are naturally led to the following Dirichlet problem: given a relatively compact open subset $D \subset Y$ with smooth boundary and $g$ a Finsler metric on $E|_{\partial D}$, is it possible to find a Griffiths extremal metric $f$ on $E|_{\overline{D}}$ assuming the values of $g$ on the boundary? Our first result says that this is indeed the case under reasonable regularity assumptions on $D$ and $g$.

\begin{theorem}\label{thm: main PDE theorem} Let $D \subset Y$ be a relatively compact strongly pseudoconvex domain with smooth boundary, and $g$ a continuous Finsler metric on $E|_{\partial{D}}$ such that $g_z:=g(z,\cdot)$ is psh on $E_z, \ z \in \partial D$. Then there exists a unique continuous Finsler metric $f$ on $E|_{\overline{D}}$ solving the following Dirichlet problem:
\begin{equation}\label{eq: Griff_extr_equation_intr}
    \begin{cases}
    (i\partial \bar{\partial} f)^{m+r}=0 \text{  on $E|_D$,}\\
    f \in \textup{PSH}(E|_D),\\
    f=g, \text{  on $E|_{\partial D}$}.
        \end{cases}
\end{equation}
\end{theorem}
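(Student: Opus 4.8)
The plan is to reduce this to a scalar complex Monge–Ampère problem and then invoke the Bedford–Taylor theory of the Dirichlet problem on strongly pseudoconvex domains. A Finsler metric $f$ on $E|_{\overline D}$ is the same data as a function $F$ on the total space $E|_{\overline D}$ that is nonnegative, vanishes only on the zero section, and is $S^1$-homogeneous of degree one in the fiber variable. Griffiths negativity is the requirement that $F$ be psh on the total space; Griffiths extremality adds $(i\ddbar F)^{m+r}=0$. So I would work on the total space $E|_{\overline D}$, which is itself a complex manifold of dimension $m+r$, and try to solve

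\begin{equation*}
    (i\ddbar F)^{m+r}=0 \text{ on } E|_D,\qquad F\in\psh(E|_D),\qquad F=G \text{ on } E|_{\partial D},
\end{equation*}

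where $G$ is the degree-one homogeneous function on $E|_{\partial D}$ corresponding to $g$.

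**Choosing the domain and the envelope.** The natural candidate is the Perron–Bramble envelope
\begin{equation*}
    F=\sup\{\,v : v\in\psh(E|_D),\ v^*\le G \text{ on } E|_{\partial D}\,\},
\end{equation*}
and the work is to show (i) $F$ is continuous up to the boundary with $F=G$ there, (ii) $F$ solves the homogeneous Monge–Ampère equation in the Bedford–Taylor sense, and (iii) $F$ is the degree-one homogeneous function of a genuine Finsler metric, i.e. it is nonnegative, homogeneous, and strictly positive off the zero section. Steps (i) and (ii) are exactly the content of Bedford–Taylor's solution of the Dirichlet problem, provided one can produce psh subbarriers and superbarriers at every boundary point of $E|_D$ inside the total space. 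Here strong pseudoconvexity of $D$ and the psh assumption on $g_z$ enter: near a point $(z_0,\xi_0)\in E|_{\partial D}$, one builds a local barrier by adding a strictly psh defining function of $D$ pulled back to $E$ (which exists since $D$ is strongly pseudoconvex, and its pullback stays strictly psh in the base directions) to a smooth psh extension of $G$ in the fiber directions, using that $G$ restricted to each fiber is psh. The homogeneity in (iii) is inherited automatically: the $S^1$-action $\xi\mapsto e^{i\theta}\xi$ is a biholomorphism of $E|_D$ fixing the boundary data $G$, so it preserves the envelope, giving $S^1$-invariance; genuine $\Bbb C^*$-homogeneity $F(z,\lambda\xi)=|\lambda|F(z,\xi)$ then follows because both sides are psh in an appropriate sense and agree on the unit circle bundle of $g$ — more directly, one checks that $\log F$ is psh and the scaling $\xi\mapsto\lambda\xi$ lets one compare $F(z,\lambda\xi)/|\lambda|$ with $F(z,\xi)$ as competitors in the envelope. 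Positivity off the zero section follows by squeezing $F$ between two constant-multiple Hermitian barriers obtained from extending $g$.

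**The main obstacle and how to handle it.** The delicate point is the regularity and genuine-Finsler-metric property near the zero section: the total space $E|_D$ is noncompact in the fiber directions and the function $F$ degenerates (vanishes) along the zero section, so the envelope a priori need not even be finite, let alone continuous, without controlling behavior at fiber infinity and at $\xi=0$. I would handle fiber infinity by noting that competitors $v$ are dominated by $G$ on the circle bundle $\{g=1\}$ and by psh-ness along fibers this forces linear growth $v(z,\xi)\le C|\xi|$ on all of $E|_D$ for a fixed reference norm, so the sup is locally bounded; near the zero section, the degree-one homogeneity (already established from the $\Bbb C^*$-symmetry argument) forces $F(z,0)=0$ and gives the precise cone-like behavior, and continuity there follows from sandwiching between the two Hermitian barriers $c_1|\xi|\le F\le c_2|\xi|$. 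Uniqueness is the easy part: it is the standard comparison principle for bounded psh solutions of the homogeneous complex Monge–Ampère equation (Bedford–Taylor), applied on $E|_D$ with the two solutions agreeing on $E|_{\partial D}$, plus the remark that any solution is automatically homogeneous by the same symmetry argument so the comparison on the — necessarily relatively compact after quotienting by $\Bbb C^*$ — projectivized total space applies.
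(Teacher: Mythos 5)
There is a genuine gap, and it is exactly at the point you flag as ``the main obstacle.'' Your control at fiber infinity is unjustified: the competitors in your Perron family are constrained only on $E|_{\partial D}$, i.e.\ on the fibers over boundary points of $D$, and \emph{not} on the circle bundle $\{g=1\}$ over interior points, so the claim that ``psh-ness along fibers forces $v(z,\xi)\le C|\xi|$'' has no basis. Since $E|_D$ is not relatively compact, neither the Bedford--Taylor existence theorem nor the comparison principle applies off the shelf, even if you produce barriers at every point of $E|_{\partial D}$; for the same reason the proposed squeeze between Hermitian barriers $c_1|\xi|\le F\le c_2|\xi|$ is circular, because the upper bound already presupposes a maximum principle on the noncompact total space. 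The paper's mechanism for this is different and is the actual content of the proof: (a) boundedness and the boundary $\limsup$ are obtained by choosing, via Cartan's Theorem A on a Stein shrinking of $Y$, sections $s_1,\dots,s_k$ spanning $E$ over $\overline D$, so that composing any competitor with the resulting surjection $\overline D\times\mathbb{C}^k\to E|_{\overline D}$ produces, for each fixed $\lambda$, a psh function on the \emph{relatively compact} $D$ dominated by the harmonic function with boundary data $g\circ\phi(\cdot,\lambda)$; (b) continuity, the equation, and the correct boundary values are proved not on $E$ but on the compact-fibered projectivization $\mathbb{P}(E)|_{\overline D}$, where a Griffiths negative background Hermitian metric $h$ (built as $\tilde h e^{k\rho}$) gives a K\"ahler form $\alpha=-\Theta(h_L)$ and the problem becomes $(\alpha+i\ddbar\psi)^{m+r-1}=0$, solved by Blocki's theorem after Demailly-type approximation of the boundary data, with a Hermitian--Yang--Mills supersolution handling the delicate behavior at the zero section. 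None of this, nor a workable substitute, is present in your sketch.

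Uniqueness has a second gap of the same nature. A direct comparison on $E|_D$ is unavailable (noncompact domain, solutions unbounded in the fiber), and your fallback of ``quotienting by $\mathbb{C}^*$ and comparing on $\mathbb{P}(E)$'' silently assumes that $(i\ddbar f)^{m+r}=0$ on the total space is equivalent to $(\alpha+i\ddbar\varphi_f)^{m+r-1}=0$ on the projectivization. This is not formal: for a degree-one homogeneous $f$ the radial direction contributes a strictly \emph{positive} term (already $i\ddbar|\lambda|>0$ on a $\mathbb{C}^*$-orbit), so the dimension drop by one must be paid for by an explicit computation. The paper does this in the lemma following its Theorem 3.6, showing $\bigl(i\ddbar(e^{h}|\xi|)\bigr)^{k+1}\ge c\,|\xi|^{k-1}e^{(k+1)h}(i\ddbar h)^{k}\wedge i\,d\xi\wedge d\bar\xi$, which transfers positive Monge--Amp\`ere mass from the projectivized potential to the total space and yields the contradiction needed for uniqueness. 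Your homogeneity argument via scaling competitors is fine and matches the paper, but as it stands both the existence and the uniqueness halves of your outline rest on compactness-type tools that are not available on $E|_D$.
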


By continuity of the Finsler metrics $f$ and $g$ we understand continuity on the total space of the bundles $E|_{\overline{D}}$ and $E|_{\partial D}$ respectively.
Strong pseudoconvexity means that there exists an open set $D'\subset Y$ such that $ D' \supset \overline{D}$ and $\rho: D' \to \Bbb R$ smooth and strongly psh defining function of $\overline{D}$ (i.e., $d\rho\neq 0$ on $\partial D$, $\partial D = \{\rho = 0 \}$, and $D = \{\rho <  0\}$). 
This is perhaps the most natural condition to pose for $D$, when trying to solve complex Monge--Amp\`ere equations. Moreover, since $E|_{\partial D}$ contains complex directions, it is \emph{necessary} to ask that $g$ is psh in these directions, otherwise no Griffiths negative metric could assume these values. Interestingly enough, since $D$ is strongly pseudoconvex we need not require plurisubharmonicity of $g$ in directions of $\partial D$.

Similar equations have been already solved in the literature, the difficulty here is the fact the underlying domain $E|_{\overline{D}}$ is non-compact. We will circumvent this issue by a convenient projectivization argument. Moreover, as in the case of compact domains, we will see that the solution $f$ arises as the solution to the following Perron process:
\begin{equation}\label{eq: Griff_Extr_Metric_Perron}
f := \sup_{h \in F^\mathcal M_g} h,
\end{equation}
where $F^\mathcal M_g$ is the following family of Griffiths negative metrics:
$$ F^\mathcal M_g := \{h \textup{ is a Griff. negative Finsler metric on }E|_{D}, \limsup_{E|_{D} \ni (x',\xi') \to (x,\xi) \in E|_{\partial D}} h(x',\xi') \leq g(x,\xi)\}.$$
Staying with Perron processes, when $D \subset \Bbb C^m$ is a domain and $E$ is trivial, the envelope of $ F^\mathcal M_g $ has been considered specifically by Slodkowski in \cite{Sl1}. Slodkowski interpreted $f$ as the interpolation of the Finsler norm $g$ from $E|_{\partial D}$ into $E|_{\overline{D}}$. Interestingly, the connection with complex Monge--Amp\`ere equations seems to have not been emphasized, and the extension to pseudoconvex manifolds  seems not to have been considered in the literature until now either. With different motivation, the case $m=1$ and $E$ trivial has been considered by Coifman--Cwikel--Rochberg--Sagher--Weiss \cite{CCRSW82} and Rochberg \cite{Ro84}. Higher dimensional cases have been considered by Slodkowski \cite{Sl2,Sl3,Sl4}, Coifman--Semmes \cite{CS93} and Berndtsson--Cordero-Erausquin--Klartag--Rubinstein \cite{BCKR16}, with connections to Yang--Mills equations pointed out by Donaldson \cite{Do92}. 

In case the boundary data $g$ is a Hermitian metric (or a Finsler norm) one might naturally wonder if the solution $f$ to \eqref{eq: Griff_extr_equation_intr} is also a Hermitian metric (or a Finsler norm) on $D$. As pointed out by Slodkowski in \cite[Corollary 6.8]{Sl2} this may not be the case (in fact, he gave an example where the boundary data $g$ is a Finsler norm, but the envelope $f$ is not). However, in the Finsler norm case one may still consider the following partial Perron envelope 
\begin{equation}\label{eq: Griff_Extr_Norm_Perron}
\tilde f := \sup_{h \in F^\mathcal N_g} h,
\end{equation}
where $F^\mathcal N_g$ is the following family of Griffiths negative norms(!):
$$ F^\mathcal N_g := \{h \textup{ is a Griff. negative Finsler norm on }E|_{D}, \limsup_{E|_{D} \ni (x',\xi') \to (x,\xi) \in E|_{\partial D}} h(x',\xi') \leq g(x,\xi)\}.$$
Naturally $\tilde f \leq f$.  As shown by Slodkowski in \cite{Sl1}, in case $D$ is a (strongly pseudoconvex) smooth subdomain of $\Bbb C^m$ and $E$ is trivial, $\tilde f$ is a continuous Griffiths negative Finsler norm on $E|_D$ that assumes the values of $g$ on $E|_{\partial D}$. On the other hand, according to Slodkowski's example \cite[Corollary 6.8]{Sl2}, $f$ is not always a Finsler norm, so $f \neq \tilde f$ and $\tilde{f}$ does not solve \eqref{eq: Griff_extr_equation_intr} in general by uniqueness of the solution.

\vspace{-0.1in}\paragraph{A degenerate complex Monge--Amp\`ere equation in K\"ahler geometry.} Now we introduce a natural Dirichlet problem in K\"ahler geometry, closely related to Mabuchi flatness (see Section 2), whose solution we will quantize via the Griffiths extremal metrics of the previous paragraph.
Let $(X,L)$ be an $n$-dimensional compact K\"ahler manifold with an ample Hermitian line bundle $(L,h)$ such that $\omega:= i\Theta(h) >0$ is K\"ahler. Given this data, one can introduce the space of K\"ahler potentials and $\omega$-psh potentials respectively:
$$\mathcal H_\omega := \{u \in C^\infty(X) \textup{\ s.t. \ } \omega + i\ddbar u >0\}.$$
$$\textup{PSH}(X,\omega) := \{u:X \to [-\infty,\infty), u \textup{ is usc and } \omega + i\ddbar u \geq 0\}.$$

As before, $D \subset Y$ is a relatively compact strongly pseudoconvex domain with $\dim Y =m$. Moreover, given a boundary data $v \in C(\partial{D} \times X)$ satisfying $v_z:=v(z,\cdot) \in \textup{PSH}(X,\omega), \ z \in \partial D$, we consider the following equation: for a function $u$ on $\overline{D}\times X$
\begin{equation}
    \begin{cases}\label{eq: DR_equation}
    (\pi^*\omega +i\partial \bar{\partial}u)^{n+m}=0 \text{  on $D\times X$},\\
    u|_{\partial D\times X}=v\\
    \pi^*\omega +i\partial \bar{\partial}u\geq 0,
    \end{cases}
\end{equation} 
where $\pi: Y \times X \to X$. This equation was perhaps first considered by Berman--Demailly \cite{BD09}, when the boundary data is $C^2$ (see also \cite{Bo12,Bl13} for related cases). When $D \subset \Bbb C^m$ is a Bochner tube, this equation was studied in \cite{DR16}. In this latter case, due to the symmetry in the imaginary directions, it was shown in \cite{DR16} that the solution to \eqref{eq: DR_equation} is the Legendre transform of a family of rooftop envelopes.

When $D$ is a  Riemann surface ($m=1$), the above equation becomes Donaldson's generalization of the Wess--Zumino--Witten equation \cite{Do99,BK12}.
Furthermore, when $D \subset \Bbb C$ is an annulus, and $v$ is invariant under rotation of the annulus, the above equation reduces to the equation of geodesics in the space of K\"ahler metrics \cite{Ma87,Se92,Do99,Ch00}. 

When $v$ is smooth and non-degenerate in the $X$-fibers, it is known that the solution $u$ is $C^{1,\alpha}$ \cite{Bl13}. Though continuous boundary data has not been explicitly considered in the past, one can obtain the analogous result after an application of the maximum principle and approximation techniques of K\"ahler geometry:

\begin{theorem} \label{thm: deg_CMAE_cont_main}Given $v \in C(\partial{D} \times X)$ such that $v_z=v(z,\cdot) \in \textup{PSH}(X,\omega)$ for $z \in \partial D$, the Dirichlet problem \eqref{eq: DR_equation} has a unique solution $u  \in C(\overline{D} \times X) \cap \textup{PSH}(D \times X, \pi^* \omega)$. 
\end{theorem}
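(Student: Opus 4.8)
The plan is to solve the Dirichlet problem \eqref{eq: DR_equation} by a two-step approximation: first approximate the continuous boundary data $v$ from above and below by smoother data, then pass to a limit using stability estimates from pluripotential theory. The key structural fact I would exploit is that, after the standard trick of replacing $u$ by $u + \pi^*\varphi$ for a suitable potential $\varphi$, equation \eqref{eq: DR_equation} becomes a genuine (degenerate) Monge--Amp\`ere equation for a psh function on the product $\overline D \times X$, so that the classical Bedford--Taylor--Perron machinery applies. Concretely, one sets
\begin{equation*}
    u := \sup\{w \in \textup{PSH}(D\times X,\pi^*\omega) : \limsup_{(z',x') \to (z,x)} w(z',x') \leq v(z,x) \text{ for all } (z,x) \in \partial D \times X\}.
\end{equation*}
Since each $v_z$ is $\omega$-psh, this family is nonempty (the function $(z,x)\mapsto \inf_z \min_X v_z - C(\rho(z))$ with $C$ large and convex increasing is a candidate, using strong pseudoconvexity of $D$ via the defining function $\rho$), and the upper semicontinuous regularization $u^*$ is a $\pi^*\omega$-psh candidate in the family, hence equals $u$; this gives existence of the envelope as a $\pi^*\omega$-psh function.

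The next step is to verify that $u$ attains the boundary values continuously and solves the homogeneous Monge--Amp\`ere equation in the interior. For the boundary behavior I would construct explicit psh barriers from above and below at each boundary point: from below using the candidates in the defining family, and from above using that $v$ extends to a continuous function on $\overline D \times X$ which can be corrected by a large multiple of $\rho$ to become $\pi^*\omega$-psh (strong pseudoconvexity is exactly what makes $A\rho$ strictly plurisubharmonic enough to dominate). A modulus-of-continuity argument, quantifying the continuity of $v$ and the barriers, then upgrades this to $u \in C(\overline D \times X)$. That $u$ is maximal, i.e. $(\pi^*\omega + i\ddbar u)^{n+m} = 0$ on $D\times X$, follows from the standard balayage/comparison argument: if the Monge--Amp\`ere measure were nonzero on some ball, one could locally push $u$ up, contradicting maximality of the envelope; here one uses the local theory of Bedford--Taylor on small coordinate charts of $D\times X$ and the fact that $\pi^*\omega$ is locally $i\ddbar$ of a smooth function.

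Uniqueness is the comparison principle for the degenerate complex Monge--Amp\`ere operator on the relatively compact domain $D\times X$ with equal boundary data, which is classical once both competitors are continuous and $\pi^*\omega$-psh; one applies it to $u_1 - u_2$ after the usual reduction. I expect the main obstacle to be the continuity of $u$ up to the boundary with merely continuous $v$: the degeneracy of the equation (the form $\pi^*\omega$ is degenerate along the $D$-directions) means one cannot directly invoke the non-degenerate Bedford--Taylor regularity theory, so one must run an approximation $v^{(j)} \searrow v$ by restrictions of functions smooth and $\omega$-nondegenerate in the $X$-fibers (obtained by the Richberg-type regularization and the approximation techniques of K\"ahler geometry, e.g. Demailly's regularization of $\omega$-psh functions), solve \eqref{eq: DR_equation} with data $v^{(j)}$ using the $C^{1,\alpha}$ result of Blocki cited in the excerpt, and then control the convergence $u^{(j)} \to u$ by a maximum-principle estimate $\sup_{D\times X}|u^{(j)} - u^{(k)}| \leq \sup_{\partial D\times X}|v^{(j)} - v^{(k)}|$, which is where the linearity of the estimate in the boundary data is crucial. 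Matching this limit with the envelope above, and checking the limit still solves the homogeneous equation (upper semicontinuity of the mixed Monge--Amp\`ere operator under decreasing limits, again Bedford--Taylor), completes the argument.
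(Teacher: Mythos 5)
Your proposal follows essentially the same route as the paper: regularize the boundary data fiberwise by Demailly approximation (plus mollification in the $\partial D$ directions), extend to $\overline D\times X$ using strong pseudoconvexity, solve the nondegenerate problems via Blocki's theorem, pass to the limit using the comparison-principle estimate $\sup_{D\times X}|u^{j}-u^{k}|\leq \sup_{\partial D\times X}|v^{j}-v^{k}|$, obtain the equation in the limit from Bedford--Taylor convergence, and get uniqueness from the comparison principle. The Perron-envelope scaffolding is harmless but unnecessary, and the one shaky sub-claim in it (adding a large multiple of $\rho$ to a merely continuous extension of $v$ does not make it $\pi^*\omega$-psh) is bypassed anyway, since the approximation argument alone already yields continuity up to the boundary and the boundary values.
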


By estimates of Blocki \cite{Bl11}, in case $v$ is $C^{0,1}$ one can show that $u \in C^{0,1}$ as well. For brevity, we will focus on continuous solutions in this work, and we do not elaborate on regularity further.

As is well known, $\textup{PSH}(X,\omega) \cap C(X)$ is a subset of the Mabuchi metric completion of $\mathcal H_\omega$ \cite{Da17} (for a recent survey see \cite{Da19}). Consequently, one can think of  the solution $u$ to \eqref{eq: DR_equation} as a map $u_z:D \to  \overline{{\mathcal H}_\omega}$. As shown in Section 2, when $D$ is special, the map $z \to u_z$ is closely related to flat embeddings of convex domains into (the completion of) $\mathcal H_\omega$, hence  its understanding is geometrically well motivated.

\vspace{-0.1in}\paragraph{The dual Fubini--Study/Hilbert maps and Finsler quantization.} K\"ahler quantization has a long history going back to predictions of Yau \cite{Ya87} and early work of Tian \cite{Ti88, Ti90}, with refinements by Catlin \cite{Ca97}, Zelditch \cite{Ze98}, Lu \cite{Lu00} and many others. There is a canonical quantization scheme of  $\mathcal H_\omega$, whereby the infinite dimensional space $\mathcal H_\omega$ is approximated by the finite dimensional spaces of Hermitian metrics $\textup{Herm}(H^0(X,L^k))$ or $\textup{Herm}(H^0(X,L^k \otimes K_X))$. This scheme has been recently extended to the metric completion \cite{DLR18}, following up on work of many authors including \cite{Do01}, \cite{PS06}, \cite{SZ10}, \cite{Brn09b}, \cite{CS12}. 

In case $D$ is a Riemann surface, the quantization scheme of $z \to u_z$ was explained in embryonic form in \cite[Section 2.2.2]{BK12}. There the authors used the spaces of Hermitian metrics $\textup{Herm}(H^0(X,L^k \otimes K_X))$ to quantize, and they also speculated on the possibility of quantization in case $D$ is higher dimensional. Below we show that this can be carried out, however one needs to quantize using the Finsler(!) metrics of the dual space $H^0(X,L^k)^*$ instead, without any twisting by $K_X$. Indeed, it seems that there are not enough Hermitian metrics to quantize in case $\dim D > 1$. 

As a novelty of our work, we now point out that  one can find a natural extension of the classical Hilbert and Fubini--Study maps to the dual Finsler setting. We denote by 
\begin{align*}
&(1)\text{ $\mathcal{H}_k$ the space of Hermitian metrics on $H^0(X,L^k)$.}\\    
&(2)\text{ $\mathcal{N}_k$ the space of Finsler norms on $H^0(X,L^k)$.}\\    
&(3)\text{ $\mathcal{M}_k$ the space of psh Fisnler metrics on $H^0(X,L^k)$.}    
\end{align*}
The third one simply consists of Finsler metrics that are also psh on $H^0(X,L^k)$. Since norms are convex, and convex functions are psh, we have obvious inclusions $\mathcal{H}_k \subset \mathcal N_k \subset \mathcal M_k$.

Similarly we denote by $\mathcal{H}^*_k/\mathcal N^*_k/\mathcal M^*_k$ the space of Hermitian metrics/Finsler norms/psh Finsler metrics on the dual vector space $H^0(X,L^k)^*$. Again, we have the inclusions $\mathcal{H}_k^* \subset \mathcal N_k^* \subset \mathcal M_k^*$.

The classical Hilbert map $H_k: \mathcal{H}_{\omega}\rightarrow \mathcal{H}_k$ is given by the formula 
$$H_k(u)(s,s)=\int_X h^k(s,s)e^{-ku}\omega^n, \text{ for } s\in H^0(X,L^k).$$ 
The dual Hilbert map is $H_k^*:\mathcal{H}_{\omega}\rightarrow \mathcal{H}^*_k$, defined via dualization: 
\begin{equation}\label{eq: Hilb_k_usual_dual}
H^*_k(u) = H_k(u)^*.
\end{equation}
In the opposite direction, we have the well known Fubini--Study map $FS_k: \mathcal{H}_k \rightarrow \mathcal H_\omega$:
\begin{equation}\label{eq: FS_k_usual}
FS_k(G)=\frac{1}{k}\log \sup_{s\in H^0(X,L^k),G(s)\leq 1}h^k(s,s).
\end{equation}
One can similarly define the dual Fubini--Study map 
$FS^*_k: \mathcal{H}^*_k \rightarrow \mathcal H_\omega$, given by the formula 
$FS^*_k(G) := FS_k(G^*).$ 

We note that the operator $H_k$ is monotone decreasing in the sense that if $u_1\leq u_2$ then $H_k(u_1)(s,s)\geq H_k(u_2)(s,s)$ for any $s\in H^0(X,L^k)$. Also, $FS_k$ is monotone decreasing in the sense that if $G_1\leq G_2$ then $FS_k(G_1)\geq FS_k(G_2)$. On the other hand, the dual maps $H^*_k$ and $FS^*_k$ are monotone increasing due to the fact if a norm is large then its dual norm will be small. 

When trying to extend $FS^*_k$ to $\mathcal{N}^*_k$ and more generally, $\mathcal{M}^*_k$, a different point of view is necessary, as we now elaborate. Starting with an arbitrary global non-vanishing section of $(L^k)^*$ and dividing it by its length using the metric $(h^*)^k$, one obtains a global discontinuous section $s_k^*:X \to (L^k)^*$ such that $(h^*)^k(s_k^*(x),s_k^*(x))=1$ for all $x \in X$. Using this section one can introduce the evaluation map $\hat s_k^*: X \to H^0(X,L^k)^*$ given by the formula $\hat{s}_k^*(x)(\sigma) := s_k^*(\sigma(x))$, $\sigma \in H^0(X,L^k)$. 

We define $FS_k^*:\mathcal M_k^* \to \textup{PSH}(X,\omega)$ by the following formula
\begin{equation}\label{eq: FS_k_def_general}
FS^*_k(G^*)(x)= \frac{2}{k} \log\big[G^* (\hat{s}^*_k(x))\big], \ x \in X.
\end{equation}
It is not hard to see that  this definition is independent of the choice of $s_k^*$. We will show that it is consistent with the definition of \eqref{eq: FS_k_usual} and  $FS^*_k(G^*) \in \textup{PSH}(X,\omega)$ (Lemmas \ref{lem: KRW-formula} and \ref{lem: FS_k_PSH}). From the defining formula (\ref{eq: FS_k_def_general}), this extended Fubini--Study map $FS^*_k$ is also monotone increasing.

Let $v \in C(\partial{D} \times X)$ such that $v_z:=v(z,\cdot) \in \textup{PSH}(X,\omega), \ z \in \partial D$. We consider the trivial bundle $D\times H^0(X,L^k)^*$, and the following families of Griffiths negative Finsler metrics/norms on this bundle:
$$F_v^{\mathcal N,k}:= \{D \ni z \to U_z \in \mathcal N_k^* \textup{ is Griffiths negative and } \limsup_{z \to \partial D} U_z \leq H_k^*(v)\}.$$
$$F_v^{\mathcal M,k}:= \{D \ni z \to U_z  \in \mathcal M_k^*\textup{ is Griffiths negative and } \limsup_{z \to \partial D} U_z \leq H_k^*(v)\}.$$

As pointed out after \eqref{eq: Griff_Extr_Metric_Perron} and \eqref{eq: Griff_Extr_Norm_Perron}, these families are  stable under taking the supremum, allowing us to consider their upper envelopes: 

\begin{equation}\label{eq: Griff_extr_metric_norm_Perron_def_intr}
U^{\mathcal M,k}:= \sup_{V \in F^{\mathcal M,k}_v} V, \ \ \ \ \ U^{\mathcal N,k}:= \sup_{V \in F^{\mathcal N,k}_v} V, \ \ z \in D. 
\end{equation}

By the comments following Theorem \ref{thm: main PDE theorem}, $U^{\mathcal M,k}$ is the Griffiths extremal Finsler metric assuming the boundary values $H_k^*(v)$.  $U^{\mathcal N,k}$ also assumes the correct boundary values (Proposition \ref{prop: bdry data}), and trivially $U^{\mathcal N,k} \leq U^{\mathcal M,k}$, but (as discussed before) in general $U^{\mathcal N,k}$ and $U^{\mathcal M,k}$ are different. Despite this, in our main result we show that both of these envelopes tend to the solution $u$ of \eqref{eq: DR_equation} in the large $k$-limit:

\begin{theorem}\label{thm: C_0_quantization_main} Given $v \in C(\partial{D} \times X)$ such that $v_z \in \textup{PSH}(X,\omega), \ z \in \partial D$,  we have  that:\\
\noindent (i) $\|FS_k^*(U^{\mathcal N,k}) - u \|_{C^0(D \times X)} \to 0$ as $k \to \infty$,\\
\noindent (ii) $\|FS_k^*(U^{\mathcal M,k}) - u \|_{C^0(D \times X)} \to 0$ as $k \to \infty$,\\
where $u$ is the solution to \eqref{eq: DR_equation} and $U^{\mathcal N,k}/U^{\mathcal M,k}$ are the envelopes of Griffiths negative norms/metrics from \eqref{eq: Griff_extr_metric_norm_Perron_def_intr}.
\end{theorem}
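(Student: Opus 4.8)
The plan is to sandwich $FS_k^*(U^{\mathcal M,k})$ and $FS_k^*(U^{\mathcal N,k})$ between two families of $\pi^*\omega$-psh functions on $D\times X$ that both converge uniformly to $u$, and then invoke uniqueness from Theorem \ref{thm: deg_CMAE_cont_main}. The key mechanism translating between the Finsler side and the K\"ahler side is the identity $FS_k^* \circ H_k^* = \id$ up to a controlled error: by classical Bergman kernel asymptotics (Tian--Zelditch--Catlin, in the form used in \cite{DLR18}), one has $\|FS_k(H_k(w)) - w\|_{C^0(X)} \to 0$ for $w \in \mathcal H_\omega$, and by the Demailly--type approximation and the envelope characterization one extends this to $\|FS_k^*(H_k^*(w)) - P_\omega w\|_{C^0(X)}\to 0$ uniformly for continuous $\omega$-psh $w$, where $P_\omega w$ denotes the psh envelope (here $P_\omega v_z = v_z$ since $v_z$ is already $\omega$-psh). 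The first step is therefore to establish, using \eqref{eq: FS_k_def_general} and the monotonicity of $FS_k^*, H_k^*$, that $z \mapsto FS_k^*(U_z)$ is a legitimate $\pi^*\omega$-psh comparison function: Berndtsson's positivity of direct images \cite{Brn09} is what guarantees that a Griffiths negative Finsler metric on $D\times H^0(X,L^k)^*$ pushes forward, via $FS_k^*$, to a $\pi^*\omega$-psh function on $D\times X$, and Kobayashi's classical argument \cite{Ko75} running in reverse builds Griffiths negative metrics out of $\pi^*\omega$-psh families. This equivalence — Griffiths negativity of $z\mapsto U_z$ upstairs $\iff$ $\pi^*\omega$-plurisubharmonicity of $z\mapsto FS_k^*(U_z)$ downstairs — is the conceptual heart and should be isolated as a lemma.

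With that dictionary in hand, I would argue the upper bound as follows. Let $u$ solve \eqref{eq: DR_equation}. Since $FS_k^*(H_k^*(v_z)) \to v_z$ uniformly on $\partial D \times X$, the function $z \mapsto FS_k^*(U^{\mathcal M,k}_z)$ has boundary values within $\epsilon_k \to 0$ of $v$, and being $\pi^*\omega$-psh on $D\times X$, the comparison principle (or just the maximality of $u$ among $\pi^*\omega$-psh functions with the given boundary data, which is how $u$ is constructed via its own Perron envelope implicit in Theorem \ref{thm: deg_CMAE_cont_main}) gives $FS_k^*(U^{\mathcal M,k}) \leq u + \epsilon_k$ on $D\times X$. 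Since $U^{\mathcal N,k} \le U^{\mathcal M,k}$ and $FS_k^*$ is monotone, the same bound holds for $FS_k^*(U^{\mathcal N,k})$. For the lower bound one goes the other way: given the solution $u$, for each $z$ the function $U_z := H_k^*\big(FS_k^*\cdots\big)$ — more precisely, one takes $H_k^*(u_z)$, checks via the lemma above that $z \mapsto H_k^*(u_z)$ is Griffiths negative on the trivial bundle (this uses that $u$ itself is $\pi^*\omega$-psh, hence $z\mapsto H_k(u_z)$ has the right positivity, hence the dual is Griffiths negative by Berndtsson), and checks the boundary inequality $\limsup_{z\to\partial D} H_k^*(u_z) \le H_k^*(v)$ which is immediate from $u|_{\partial D \times X} = v$ and monotonicity of $H_k^*$. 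Hence $H_k^*(u_\cdot) \in F_v^{\mathcal N,k} \subset F_v^{\mathcal M,k}$, so $H_k^*(u_z) \le U^{\mathcal N,k}_z \le U^{\mathcal M,k}_z$, and applying $FS_k^*$ and the near-inverse asymptotics gives $u - \epsilon_k' \le FS_k^*(U^{\mathcal N,k}) \le FS_k^*(U^{\mathcal M,k})$ on $D\times X$. Combining with the upper bound yields uniform convergence to $u$, proving both (i) and (ii) simultaneously.

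The main obstacle I anticipate is the uniformity of the approximate-inverse estimate $FS_k^*\circ H_k^* \approx \id$ along the whole family $z\mapsto v_z$ (and more delicately $z\mapsto u_z$) with an error $\epsilon_k$ independent of $z$, since $u$ is merely continuous and not smooth in the $X$-fibers, so one cannot quote the standard smooth Bergman expansion directly. The resolution is a two-step squeeze: approximate $v$ (resp.\ $u$) uniformly from above and below by finite maxima of elements of $\mathcal H_\omega$ using the standard Demailly--Blocki--Kolodziej regularization adapted to the relative setting (this is essentially the content behind Theorem \ref{thm: deg_CMAE_cont_main}), apply the smooth asymptotics to each regularization with a uniform constant, and pass to the limit; the monotonicity of all four operators $H_k, FS_k, H_k^*, FS_k^*$ makes the squeeze go through. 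A secondary technical point is verifying that the discontinuous section $s_k^*$ and the resulting evaluation map $\hat s_k^*$ interact correctly with plurisubharmonicity in the base variable $z$ — but this is local in $z$ and reduces to the already-established Lemmas \ref{lem: KRW-formula} and \ref{lem: FS_k_PSH} fiberwise, combined with Berndtsson's theorem for the $z$-dependence.
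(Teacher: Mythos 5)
Your overall sandwich strategy is the same as the paper's, and the upper bound half is essentially correct: Griffiths negativity of $z\mapsto U^{\mathcal M,k}_z$ pushes down via $FS_k^*$ to a $\pi^*\omega$-psh function (Proposition \ref{prop: semi_classic_max_princ}), whose boundary values are $\leq v + \epsilon_k$ by the one-sided Bergman estimate $FS_k\circ H_k(v_z)\leq v_z + C\frac{\log k}{k}+CM_v(\tfrac1k)$ (Lemma \ref{lem: FS_k_H_k_convergence}), so maximality of $u$ gives the bound from above. The genuine gap is in your lower bound. You propose to show that $z\mapsto H_k^*(u_z)$ is Griffiths negative ``since $u$ is $\pi^*\omega$-psh, hence $z\mapsto H_k(u_z)$ has the right positivity by Berndtsson.'' This step fails as stated: the quantization here uses $H^0(X,L^k)$ with \emph{no} $K_X$ twist, so to invoke Berndtsson one must rewrite $h^ke^{-ku}\omega^n$ as an $L^2$-metric for the adjoint bundle $(L^k\otimes K_X^*)\otimes K_X$, whose curvature is $\eta_k=k\omega+\Ric\,\omega$. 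Griffiths negativity of $H_k^*(w_z)$ then requires $\pi^*\eta_k+ki\ddbar w\geq 0$, which does \emph{not} follow from $\pi^*\omega+i\ddbar w\geq 0$ when $\Ric\,\omega$ has negative part; one needs the strict positivity $\pi^*\omega+i\ddbar w\geq\varepsilon\pi^*\omega$ for $k\geq k_0(\varepsilon)$ (this is exactly Proposition \ref{prop: quant_max_princ}, and it is why that proposition carries the $\varepsilon$ hypothesis). The solution $u$ is maximally degenerate --- $(\pi^*\omega+i\ddbar u)^{n+m}=0$ --- so it never satisfies such a bound, and $H_k^*(u_\cdot)$ cannot be certified as a member of $F_v^{\mathcal N,k}$ by this route.

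The paper's fix, which is absent from your proposal, is a quantitative perturbation: fix $\delta>1$, replace the boundary data by $v^\delta_z:=P(\delta v_z)$ (continuous by Lemma \ref{lem: P_cont}), solve \eqref{eq: DR_equation} with data $v^\delta$ to get $u^\delta$, and observe that $\tfrac1\delta u^\delta$ satisfies $\pi^*\omega+i\ddbar\tfrac1\delta u^\delta\geq\tfrac{\delta-1}{\delta}\pi^*\omega$, so Proposition \ref{prop: quant_max_princ} applies and $H_k^*(\tfrac1\delta u^\delta_z)$ lies below the Perron envelopes; the error is controlled by the explicit candidate $v_z+(\delta-1)\inf v\leq v^\delta_z$, and one lets $k\to\infty$ then $\delta\to1$. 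The same strictness is also what makes the \emph{lower} half of the near-inverse estimate $v-\tfrac{C}{k}\leq FS_k\circ H_k(v)$ available (it rests on Ohsawa--Takegoshi and in Lemma \ref{lem: FS_k_H_k_convergence} is proved only under $\omega_v\geq\delta\omega$), so your proposed regularization-squeeze for merely continuous $\omega$-psh data would need constants uniform as the positivity degenerates, which is not justified. Finally, note that the dictionary you call an equivalence is only one implication in the paper (Griffiths negativity $\Rightarrow$ $\pi^*\omega$-psh after $FS_k^*$); the converse direction is exactly the Berndtsson step above and carries the positivity caveat.
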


Although $U^{\mathcal N,k}$ and $U^{\mathcal M,k}$ are different in general, they do agree when $\dim D=1$. In fact, in this particular case both $U^{\mathcal N,k}$ and $U^{\mathcal M,k}$ are Hermitian metrics.
Indeed, this is a consequence of the Wiener--Masani type decomposition of the boundary data $H_k^*(v)$, as elaborated in \cite{CS93,Do92}. Due to this observation,  Theorem \ref{thm: C_0_quantization_main} recovers  well known results on quantization of Mabuchi geodesics and solutions to Wess--Zumino--Witten equations \cite{Brn09b,BK12}. 

For smooth $v$ it is expected that the $C^0$-convergence of the above theorem can be upgraded to $C^{1,\alpha}$-convergence, at least when $\partial D$ is Levi flat. However this is an open question even in the case when $D$ is a Riemann surface, and will likely require a refined analysis of Bergman kernels that is beyond the scope of the current work. We hope to return to this sometime in the future.

\vspace{-0.1in}\paragraph{Organization.} To provide motivation, in Section \ref{sec motivation} we point out the connection between Mabuchi flatness and the complex Monge--Amp\`ere equations considered in this work. Theorem \ref{thm: main PDE theorem} is proved in Section \ref{sect Griff neg} (Theorem \ref{thm: main_PDE_noncompact}). Theorem \ref{thm: deg_CMAE_cont_main} is proved in Section \ref{subsec degenerate CMAE} (Theorem \ref{thm: deg_CMAE_cont_boundary_data}). Theorem \ref{thm: C_0_quantization_main} is proved in Section \ref{sec Griff neg and quantization} (Theorem \ref{thm: C_0_quantization}). 

\vspace{-0.1in}\paragraph{Acknowledgments.}We would like to thank L\'aszl\'o Lempert for numerous suggestions improving the presentation of the paper. We thank the referees for careful reading and helpful comments. The  first  named  author  has  been  partially  supported  by  NSF  grants  DMS-1610202 and DMS-1846942(CAREER). The second named author has been partially supported by NSF grant DMS-1764167.

\section{Motivation: complex Monge--Amp\`ere equations and Mabuchi flatness}\label{sec motivation}

In this short section, we equip the space of K\"ahler potentials $\mathcal H_\omega$ with the usual Mabuchi $L^2$ geometry \cite{Ma87,Se92,Do99}. Given a compact convex set $\Omega \subset \Bbb R^m$ with non-empty interior, we would like to find a flat embedding $\Omega  \ni x \to u_x \in \mathcal H_\omega$. Here flatness is understood in the sense of metric spaces: the image of any segment in $\Omega$ under $x \to u_x$ is a geodesic of $\mathcal H_\omega$ \cite[Chapter II.2]{BH99}. For related work on quantizing harmonic maps into $\mathcal H_\omega$ we refer to \cite{RZ10}.

Given that $(\mathcal H_\omega,\langle\cdot,\cdot\rangle)$ is non-positively curved, the study of flat maps  plays a special role in the exploration of $\mathcal H_\omega$ and its Mabuchi completion $(\mathcal E^2,d_2)$, which is a CAT(0) space \cite{Da17}.  

There is a natural connection with complex Monge--Amp\`ere equations, specifically \eqref{eq: DR_equation}. As our goal is to provide motivation, we assume for simplicity that $\Omega \ni x \to u_x \in \mathcal H_\omega$ is a smooth flat embedding. 
Introducing $\Omega^\Bbb C := \Omega + i \Bbb R^m \subset \Bbb C^m$ (the Bochner tube with base $\Omega$), and the projection $\pi: \Omega^\Bbb C \times X \to X$, one can consider the ``complexification" $\Omega^\Bbb C \ni z \to u_z \in \mathcal H_\omega$, where $u_z := u_{\textup{Re }z}$. For sake of well-posedness we assume the positivity condition $\pi^* \omega + i\partial_{\Omega^\Bbb C \times X} \bar \partial_{\Omega^\Bbb C \times X} u \geq 0$ on $\Omega^\Bbb C \times X$, where $u(z,x)=u_z(x)$. 

Due to flatness, for any $a,b \in \Omega$, the curve $t \to u_{a + t(b-a)}$ is a Mabuchi geodesic. Due to positivity, we obtain that the (1,1)-form $\pi^*\omega + i\partial_{\Omega^\Bbb C \times X} \bar \partial_{\Omega^\Bbb C \times X} u$ has a zero eigenvalue for all $(z,x) \in \Omega^\Bbb C \times X$.  Consequently, we have that 
\begin{equation}\label{eq: CMAE_motivation}
(\pi^* \omega + i\partial_{\Omega^\Bbb C \times X} \bar \partial_{\Omega^\Bbb C \times X}  u)^{m+n} = 0 \ \ \textup{ on } \ \ \Omega^\Bbb C \times X.
\end{equation}
Clearly, equation \eqref{eq: CMAE_motivation} does not characterize the flatness condition for a smooth map $x \to u_x$. However flatness most often leads to over-determined problems in geometric analysis. On the other hand, the weaker condition of the above equation does allow for a robust setup, as explored in the previous sections. This same exact PDE was considered in \cite{DR16}, and one can think of the Dirichlet problem \eqref{eq: DR_equation} as trying to find (weak) flat maps into the space of K\"ahler metrics with prescribed boundary data.

Unfortunately $\mathcal H_\omega$ lacks smooth geodesics, so one is ultimately interested in flat embeddings into the metric completion $(\mathcal E^2,d_2)$, that is a geodesic CAT(0) metric space \cite{Da17}.  This motivates our consideration of Bedford--Taylor solutions to \eqref{eq: CMAE_motivation} throughout this paper.

It remains an interesting question to study the additional constraints under which solutions to \eqref{eq: CMAE_motivation} are always flat. One such condition is asking for affinity of  $x \to I(u_x), \ x \in \Omega,$ where $I$ is the Monge--Amp\`ere energy, and we hope to return to this problem in a future publication.

\section{Preliminaries}

In this section we collect known facts from the literature, introduce and explain our own terminology, and prove some preliminary technical results. 

Let $X$ be a complex manifold, we say that $u: X \to [-\infty,\infty)$ is  \emph{quasi-plurisubharmonic} (qpsh) if there exists an atlas $\{U_j\}_j$ for $X$, and $f_j \in C^\infty(U_j)$ such that $f_j + u|_{U_j}$ are psh functions on each $U_j$. If each $f_j$ can be taken to be zero, then $u$ is psh on $X$.

If $\omega$ is a K\"ahler form on $X$, then we say that $u: X \to [-\infty,\infty)$ is \emph{$\omega$-psh} $(u \in \textup{PSH}(X,\omega))$ if there exists an atlas $\{U_j\}_j$ for $X$, and $v_j \in C^\infty(U_j)$ local potentials for $\omega$ (i.e., $\omega|_{U_j} = i\partial \bar \partial v_j$), such that $v_j + u|_{U_j}$ are psh functions on each $U_j$.

\subsection{Approximation of families of quasi-psh potentials}

In this subsection we prove an elementary approximation result for a family of qpsh functions, that will be used multiple times in this work. Let $\pi: X \to B$ be a smooth submersion with $X,B$ compact smooth manifolds and each fiber $X_b := \pi^{-1}(b)$  a complex manifold.

We assume that $B$ has a fixed Riemannian metric $r$.
The compact fibers $X_b$ are diffeomorphic to a fixed compact complex manifold $Z$ (when $B$ is connected). However typically they are not biholomorphic to $Z$, in fact the complex structure may not even change continuously, the way we set things up. For this reason, we introduce below the ad-hoc notion of split holomorphicity, that allows for smooth families of biholomorphisms between nearby fibers, and will suffice for our purposes.

Let $TB \times_B X \to B$ be the fibered direct product of the tangent bundle $TB \to B$ and $X \to B$.
We say that $\pi$ \emph{splits holomorphically} if the zero section of the tangent bundle $TB$ has an open neighborhood $U$ for which there exists a smooth map $F: U \times_B X \to X$ with the \vspace{0.1cm} following properties (we denote $U_b:=U\cap T_bB $ for $b\in B$): \medskip

\noindent (i)\vspace{0.15cm}  $exp_b : U_b \to exp_b(U_b)$ is a diffeo, $b \in B$, where $exp_b$ is the Riemannian exponential map of $r$; \\ 
\noindent (ii) if\vspace{0.15cm} $v \in U_b$, $b \in B$ then $F(v, \cdot): X_b \to X_{exp_b(v)}$ is a biholomorphism; \\ 
\noindent (iii) for $0 \in U_b \subset T_b B$, $F(0,\cdot): X_b \to X_b$ is the identity map. \medskip

\begin{remark}\label{rem: split_holom_ex} We give two examples of submersions that split holomorphically:

\noindent (a) if $X = B \times Z$ then clearly $\pi:= pr_1 :B \times Z \to B$ splits holomorphically, with $U$ being the neighborhood of the zero section in $TB$, where $exp_b: U_b \to exp_p(U_b)$ is a diffeomorphism, and $F(v,b,z)=(exp_b(v),z)$ for any $v \in U_b$ and $z\in Z$. 

\noindent (b) a more intricate example is when $E \to B$ is a smooth $\Bbb C$-vector bundle with rank $r$. Then $\pi:\Bbb P(E) \to B$ is again seen to split holomorphically, where $\mathbb{P}(E)$ is the projectivization of $E$. Indeed, let $U$ be the neighborhood of the zero section in $TB$, where $exp_b: U_b \to exp_p(U_b)$ is a diffeomorphism.

Now let $g$ be a hermitian metric on $E$. Then $g$-parallel transport along geodesics of $r$ gives a $\Bbb C$-linear isomorphism between $T_{v}: E_b \to E_{exp_b(v)}$ for any $v \in U_b, \ b \in B$. After projectivization, we get a biholomorphism $\Bbb P T_{v}: \Bbb P(E_b) \to \Bbb P(E_{exp_b(v)})$, hence $F(v,x):= \Bbb P T_{v}(x), \ (v,x) \in U \times_B \mathbb{P}(E)$ satisfies the requirements in the definition of holomorphic splitness.
\end{remark}

\begin{theorem}\label{thm: approx_family} Let $\pi: X \to B$ be a smooth submersion with $X,B$ compact smooth manifolds and each fiber $X_b := \pi^{-1}(b)$  a complex manifold. Assume that $\pi$ splits holomorphically.
Let $\alpha$ be a smooth real $2$-form on $X$ such that $\alpha_b := \alpha|_{X_b}$ is K\"ahler on each $X_b$.

Let $u \in C(X)$ such that $u_b:= u|_{X_b} \in \textup{PSH}(X_b,\alpha_b)$. Then there exists $u^k \in C^\infty(X)$ such that $u^k \searrow u$ and $\alpha_b + i\ddbar u^k_b$ is K\"ahler on $X_b$ where $u^k_b:=u^k|_{X_b}$.
\end{theorem}

\begin{proof}Let us fix $b \in B$ momentarily, and let $k \in \Bbb N$. By \cite[Theorem 1]{BK07} there exists $\tilde u^k_b \in C^\infty(X_b)$ such that $\alpha_b + i\ddbar \tilde u^k_b$ is K\"ahler on $X_b$ and $\sup_{X_b}|\tilde u^k_b - u_b|<1/2k$.

By split holomorphicity, and since $\alpha$ is smooth on $X$ we obtain that $\alpha_{exp_b(v)} + i\ddbar \tilde u^k_b(F(v,\cdot)^{-1})>0$ for $v$ in a neighborhood of $0 \in T_b B$. Since $exp_b$ is a diffeomorphism near $0 \in T_b B$, we can find a neighborhood $V_b \subset B$ of $b$, such that $\tilde u^k_b$ extends to  a smooth function $\check u^k: \pi^{-1}(V_b) \to \Bbb R$ such that $\alpha_{b'} + i\ddbar \check u^k_{b'}$ is a K\"ahler form on $X_{b'}$ for all $b' \in V_b$. Moreover, since  $u \in C(X)$, we can further assume that $\sup_{X_{b'}}|\check u^k_{b'} - u_{b'}|<1/k$ for $b'\in V_b$. This gives the local version of Theorem \ref{thm: approx_family} on $\pi^{-1}(V_b)$.

Now we globalize the above local construction. Let $V_{b_1},\ldots, V_{b_m}$ be a finite cover of $B$, and let $\check u^k_j$ be the corresponding approximants on each $\pi^{-1}(V_{b_j})$ constructed above. Let $\rho_1,\ldots, \rho_m$ represent a partition of unity for $B$ that is subordinate to the cover  $V_{b_1},\ldots, V_{b_m}$.

We see that $\check u^k(x) := \sum_j \rho_j( \pi(x)) \check u^k_j(x) \in C^\infty(X)$, $\check u^k$ converges uniformly to $u$ on $X$, moreover  $\alpha_{b} + i\ddbar \check u^k_{b}$ is a K\"ahler form on $X_{b}$ for all $b \in B$.
Due to uniform convergence, there exist constants $c_k \searrow 0$ such that $u^k:= \check u^{k} + c_k \searrow u$, finishing the argument. 
\end{proof}

\subsection{Griffiths negativity of Finsler bundles} \label{subsec griff neg of finsler}

Let $Y$ be an $m$-dimensional complex manifold. We start with a discussion on the connection between a holomorphic vector bundle $E \to Y$ of rank $r$ and its tautological bundle $L(E) \to \Bbb P(E)$ in the Finsler context. Here $\Bbb P(E)$ is the projectivization of $E$ and $L(E) \to \Bbb P(E)$ is the tautological line bundle. There is a natural map $\Tilde{p}: L(E)\to E$ mapping $([x],\lambda x)$ to $\lambda x$, which is biholomorphic away from the zero sections. As observed by Kobayashi, Hermitian metrics on $L(E)$ are in one-to-one correspondence with Finsler metrics on $E$! 

In \cite{Ko75, Ko96}, Kobayashi defined for a strongly pseudoconvex smooth Finsler metric $f$ a notion of curvature, and from his computation one can deduce that $f$ is negatively curved in the sense of Kobayashi if and only if the associated metric $f_L=f\circ \tilde{p}$ on $L(E)$ has negative curvature, i.e., in local coordinates $\log (f_L)$ is psh.

For us, a Finsler metric $f$ is Griffiths negative if it is psh on the total space $E$ of the bundle. Notice that we do not assume strong pseudoconvexity or smoothness. By the proposition below, our definition indeed extends Kobayashi's.

\begin{prop}\label{prop: Griff_Kob_negative_equiv} Let $f$ be a Finsler metric on $\pi: E \to Y$. Then the following are equivalent:\\
\noindent (i) $f$ is Griffiths negative.\\
\noindent (ii) $f$ is plurisubharmonic on the total space of $E$.\\
\noindent (iii) $\log f$ is plurisubharmonic on the the total space of $E$. \\
\noindent (iv) $f_L$ has negative curvature on the line bundle $L(E)$.\\
\noindent (v) $f_L$ is plurisubharmonic on the total space of $L(E)$.\\
\noindent (vi) $\log f_L$ is plurisubharmonic on the total space of $L(E)$. 
\end{prop}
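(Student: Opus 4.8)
The plan is to establish the chain of equivalences in two stages. First I would observe that the biholomorphism $\tilde p : L(E) \to E$ away from the zero sections immediately transfers plurisubharmonicity: since $f_L = f \circ \tilde p$ and $\tilde p$ is biholomorphic on the complement of the zero sections, $f$ is psh on $E \setminus \{0\text{-section}\}$ if and only if $f_L$ is psh on $L(E)\setminus\{0\text{-section}\}$; the same applies verbatim with $\log f$ and $\log f_L$. So the content reduces to (a) showing psh-ness across the zero section is automatic, and (b) showing that for a function on the total space of a \emph{line} bundle, homogeneity makes ``psh'', ``$\log$ is psh'', and ``negative curvature'' all coincide.

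For step (a), I would argue that a function which is psh off a complex-analytic subset (here the zero section, which is locally $\{w = 0\}$ in fiber coordinates) and locally bounded above extends to a psh function across it; the Finsler axioms $f(y,\lambda\xi)=|\lambda|f(y,\xi)$ and $f\geq 0$ give precisely the local boundedness near the zero section. Hence (ii) on $E\setminus\{0\}$ upgrades to (ii) on all of $E$, and likewise (v) on $L(E)\setminus\{0\}$ upgrades to (v) on all of $L(E)$; the subtle point for $\log f$ is that it tends to $-\infty$ along the zero section, but a function that is $-\infty$ on a complex-analytic set and psh elsewhere (and locally bounded above) is still psh, so (iii) and (vi) make sense globally. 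This handles the issue that $\log f$ is genuinely singular.

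For step (b), I would work in a local trivialization of $L(E)$ over a coordinate chart $U$ of $\Bbb P(E)$, with fiber coordinate $w$, so that a Finsler-type function on the total space has the form $f_L(\zeta, w) = |w|\,\varphi(\zeta)$ for some positive function $\varphi$ on $U$, where $\zeta$ are coordinates on $U$. A direct Levi-form computation (or, cleanly, the fact that $\log(|w|\varphi(\zeta)) = \log|w| + \log\varphi(\zeta)$ is pluriharmonic in $w$ plus a function of $\zeta$) shows: $\log f_L$ psh $\iff$ $\log\varphi$ psh $\iff$ (by definition) negative curvature of $f_L$. And since $\log$ of a psh function need not be psh but $\exp$ of a psh function is always psh, one direction (vi)$\Rightarrow$(v) is free; for the converse (v)$\Rightarrow$(vi) I would again use homogeneity: restricting $i\ddbar f_L \geq 0$ to the circle $\{|w| = c\}$ directions and using $f_L = |w|\varphi$ forces $i\ddbar_\zeta \varphi \geq 0$ \emph{and} a compatibility with the $w$-direction that is equivalent to $i\ddbar_\zeta \log\varphi \geq 0$; this is the standard fact that a $1$-homogeneous psh function of one complex variable times a base function is psh iff the base function's logarithm is psh. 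Assembling: (vi)$\iff$(v)$\iff$(iv) on $L(E)$, these transfer via $\tilde p$ to (iii)$\iff$(ii) on $E$, and (iii)$\iff$(ii) is the same one-variable homogeneity fact applied directly on $E$ (whose fibers $E_y$ have dimension $r$, but the relevant homogeneity is still in the single scaling parameter $\lambda\in\Bbb C$).

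I expect the main obstacle to be step (b) in the direction ``$f$ psh $\Rightarrow$ $\log f$ psh'' — i.e., showing that the raw plurisubharmonicity of a $1$-homogeneous function already forces the stronger logarithmic convexity. This is false for general psh functions, and what rescues it is exactly the $\Bbb C^*$-homogeneity $f(y,\lambda\xi) = |\lambda| f(y,\xi)$: averaging $i\ddbar f \geq 0$ over the $S^1$-action, or equivalently testing the Levi form against vectors tangent to the orbits of $\lambda \mapsto \lambda\xi$, extracts the missing inequality. Making this averaging argument precise (and checking it is not disturbed by the zero section, via step (a)) is where the real work lies; everything else is the biholomorphism $\tilde p$ and Kobayashi's bookkeeping, which I would cite from \cite{Ko75,Ko14}.
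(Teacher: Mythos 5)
Your overall architecture is close to the paper's: transferring plurisubharmonicity through $\tilde p$ away from the zero sections (the paper notes these are pluripolar), getting (iii)$\Rightarrow$(ii) and (vi)$\Rightarrow$(v) for free because $\exp$ is convex increasing, and identifying (iv) with (vi) via the local decomposition $\log f_L=\log|w|+\log\varphi(\zeta)$ in a trivialization of $L(E)$ -- all of that matches the paper's bookkeeping. The genuine gap is in the one direction that carries the content, (ii)$\Rightarrow$(iii) (equivalently (v)$\Rightarrow$(vi)), where the mechanism you describe would not work as stated. Averaging $i\ddbar f\geq 0$ over the $S^1$-action is vacuous: $f$ is already $S^1$-invariant by the homogeneity axiom, so the average returns $f$ itself. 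Testing the Levi form only against vectors tangent to the scaling orbits is also insufficient: writing $f_L=|w|e^{\psi(\zeta)}$ with $\psi=\log\varphi$, at points with $w\neq 0$ the Levi form is $|w|e^{\psi}$ times $i\ddbar\psi+i\big(\partial\psi+\tfrac{dw}{2w}\big)\wedge\big(\bar{\partial}\psi+\tfrac{d\bar w}{2\bar w}\big)$, whose restriction to the fiber (orbit) direction is automatically nonnegative and says nothing about $i\ddbar\psi$; to extract $i\ddbar\psi(t,\bar t)\geq 0$ one must test against the \emph{mixed} vector $(t,\tau)$ with $\tau=-2w\,\partial\psi(t)$, chosen to cancel the rank-one gradient term. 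Moreover $f$ is merely usc and psh, not smooth, so even the corrected Levi-form computation requires a regularization compatible with the product structure $|w|\varphi(\zeta)$, which you do not address. Invoking instead the ``standard fact'' that $|w|e^{\psi}$ is psh iff $\psi$ is psh is legitimate (it is essentially Kobayashi's correspondence), but then the hard content of the proposition is being cited rather than proved, and your sketch of why that fact holds is precisely the part that fails.

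The paper's device for this step is different and worth adopting: by the maximum-principle characterization of psh functions (Gunning, Theorem J.7), it suffices to show that for every holomorphic $G:V\to E$ and holomorphic $g:V\to\CC$ the function $\log f(\pi(G),G)-\textup{Re}\,g$ satisfies the maximum principle on $V$; homogeneity gives the identity $\log f(\pi(G(z)),G(z))-\textup{Re}\,g(z)=\log f\big(\pi(G(z)),G(z)e^{-g(z)}\big)$, and since $z\mapsto G(z)e^{-g(z)}$ is again holomorphic, hypothesis (ii) makes the right-hand side the logarithm of a psh function, which satisfies the maximum principle. This proves (ii)$\Rightarrow$(iii) directly on $E$, for merely usc $f$, with no smoothing, no local normal form, and no separate treatment of the zero section. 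One smaller point: in your step (a), the psh extension across the zero section must agree with the prescribed value $f(y,0)=0$; this does hold, but it uses homogeneity together with compactness of the fibers of $\PP(E)$ (local upper bounds for $f_L$ force $f\to 0$ along the zero section), and should be said explicitly rather than folded into ``local boundedness above.''
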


\begin{proof} The equivalence between (i) and (ii) follows from our definition of Griffiths negativity. Now we show that (ii) implies (iii). Let $G: V \to E$ be holomorphic, with $V \subset \Bbb C$ an arbitrary open set. We need to show that $\log f(\pi(G),G)$ is psh on $V$. By \cite[Theorem J.7]{Gu90} it is enough to show that for any open set $W\subset V $ and any $g: W \to \Bbb C$ holomorphic, the function $\log f(\pi(G),G) - \textup{Re } g$ satisfies the maximum principle on $W$. By homogeneity of  $f$ we have that
$$\log f(\pi(G(z)),G(z)) - \textup{Re }g(z)=\log f(\pi(G(z)),G(z)e^{-g(z)}).$$
By assumption $z \to f(\pi(G(z)),G(z)e^{-g(z)})$ is psh on $W$, hence it satisfies the maximum principle on $W$. Consequently we then obtain that the logarithm of this expression satisfies the maximum principle as well, implying (iii), as desired.

That (iii) implies (ii) follows from the fact that the composition of psh functions with increasing convex functions stays psh.

That (ii) is equivalent with (v)  (and (iii) is equivalent with (vi)) follows from the fact that $\tilde p: L(E) \to E$ is a biholomorphism away from the zero sections of these bundles, which themselves are pluripolar sets.

Now we argue that (iii) implies (iv). Let $w \in \Bbb P(E)$ and  $U \subset \Bbb P(E)$ an open neighborhood of $w$, where $L(E)$ has a nonvanishing section $s: U \to L(E)$. Now (iv) follows as $z \to  \log f_L(s(z))$ is psh on $U$, since (iii) implies that so is $z \to \log f(\pi(\tilde{p}(s(z))),\tilde{p}(s(z)))$, and $f_L(s)=f(\pi(\tilde{p}(s)),\tilde{p}(s))$.

To finish the proof, we argue that (iv) implies (vi). Let $U \times \Bbb C^r$ be a local trivialization of $E|_U$, where we assume that $U \subset Y$ is a coordinate patch. 

Let $H_j \subset \Bbb C^{r}$ be the set of vectors whose $j$-th coordinate is equal to $1$, providing the classical coordinate coverings of $\Bbb C \Bbb P^{r-1}$.
Then (iv) implies that
$U \times H_j \ni (z,\xi) \to \log  f_L(z,[\xi])(\xi)$ is psh. Consequently, $U \times H_j \times \Bbb C \ni (z,\xi,s) \to \log  f_L(z,[\xi])(s\xi) = \log|s| + \log f_L(z,[\xi])(\xi)$ is psh as well. Since the sets of the type $U \times H_j \times \Bbb C$ provide coordinate charts near all points of $L(E)$, the proof of (vi) is finished.
\end{proof}

\section{The Dirichlet problem for Griffiths extremality.} 
\label{sect Griff neg}

Now we consider $D \subset Y$ a relatively compact strongly pseudoconvex smooth domain. This simply means that there exist an open set $D'\subset Y$ such that $D' \supset \overline{D}$ and $\rho \in C^\infty(D')$ such that $i\ddbar \rho >0$, $\rho^{-1}(-\infty,0) = D$, $\rho^{-1}(0)=\partial D$, and $d\rho\neq 0 $ on $\partial D$. We fix such a $\rho$ for this whole paragraph. In our first lemma we point out that we can pick a smooth Hermitian metric on $E$ that is Griffiths strictly negative in a neighborhood of $D$:

\begin{lemma}\label{lem: background_form_construct} There exists an open neighborhood $\tilde D \supset D$ and a smooth Hermitian metric $h$ on $E|_{\tilde D}$ that is Griffiths strictly negative. 
\end{lemma}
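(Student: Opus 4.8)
The plan is to construct $h$ by a patching-and-correction argument, using the strong pseudoconvexity of $D$ to dominate curvature errors by a large multiple of $i\ddbar\rho$. First I would take any smooth Hermitian metric $h_0$ on $E$ over a neighborhood of $\overline D$; this exists since $\overline D$ is compact (cover by finitely many trivializing charts and glue with a partition of unity). In general $h_0$ need not be Griffiths negative, so the idea is to twist it: set $h := e^{-\lambda\rho} h_0$ on the set $\{\rho < \delta\}$ for a suitably large constant $\lambda > 0$ and small $\delta > 0$, so that $\tilde D := \{\rho < \delta\}$ is the desired neighborhood. The point is that conformally rescaling a Hermitian metric by $e^{-\lambda\rho}$ shifts the Griffiths curvature form by $\lambda\, i\ddbar\rho$ tensored with the identity, which is strictly positive, so for $\lambda$ large the result is Griffiths positive — but we want Griffiths \emph{negative}, so instead we rescale by $e^{+\lambda\rho}$, i.e. $h := e^{\lambda\rho} h_0$.

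Concretely, recall from Proposition \ref{prop: Griff_Kob_negative_equiv} that $h$ is Griffiths negative iff the associated function on the total space $E$ is psh, equivalently iff (locally in a trivialization $U\times\Bbb C^r$) the function $(z,\xi)\mapsto h(z)(\xi,\xi)$ has $i\ddbar \geq 0$ along holomorphic disks into $E$. Writing $h_0(z)(\xi,\xi)$ in a local frame, its complex Hessian in $(z,\xi)$ jointly is a smooth Hermitian form whose only possible negativity comes from the $z$-derivatives of the frame coefficients; this negativity is bounded on the compact set $\overline D$ (say by $-C\,|\xi|^2\,\omega_0$ in the base directions, for some fixed Kähler form $\omega_0$ on $Y$ and constant $C>0$, uniformly in the fiber direction by homogeneity). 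On the other hand, for $h := e^{\lambda\rho}h_0$ one computes
\begin{equation*}
i\ddbar_{(z,\xi)}\big(e^{\lambda\rho(z)}h_0(z)(\xi,\xi)\big) = e^{\lambda\rho}\Big(i\ddbar_{(z,\xi)}\big(h_0(z)(\xi,\xi)\big) + \lambda\, h_0(z)(\xi,\xi)\, i\ddbar\rho + 2\lambda\,\textup{Re}\big(i\,\partial\rho \wedge \dbar(h_0(\xi,\xi))\big) + \lambda^2 h_0(\xi,\xi)\, i\partial\rho\wedge\dbar\rho \Big).
\end{equation*}
The last term is $\geq 0$; the Cauchy--Schwarz trick absorbs the cross term $2\lambda\,\textup{Re}(\cdot)$ into a small multiple of the $\lambda^2$ term plus a bounded multiple of $\lambda\,|\dbar(h_0(\xi,\xi))|^2$, which in turn (again by compactness and fiber-homogeneity) is controlled by a fixed constant times $h_0(\xi,\xi)$ times a background form. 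Since $i\ddbar\rho$ is strictly positive on $\overline D$, hence $\geq c\,\omega_0$ for some $c>0$, choosing $\lambda$ large enough makes the $\lambda\, h_0(\xi,\xi)\,i\ddbar\rho$ term dominate all the bounded error terms, giving $i\ddbar_{(z,\xi)}(h(\xi,\xi)) \geq 0$ on $\tilde D\times\Bbb C^r$ for every local trivialization.

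The main obstacle is bookkeeping the fiber direction correctly: unlike the scalar psh case, one must check positivity of the full complex Hessian in the joint variables $(z,\xi)$, and verify that the estimates are uniform in $\xi$ — this works precisely because all the relevant quantities are homogeneous of degree $2$ in $(\xi,\bar\xi)$, so it suffices to check on the unit sphere bundle, which is compact over $\overline D$. A cleaner alternative, which I would actually prefer to present, is to pass to the tautological line bundle $L(E) \to \Bbb P(E)$ via Proposition \ref{prop: Griff_Kob_negative_equiv}: a Hermitian metric $h_0$ on $E$ corresponds to a Hermitian metric $h_{0,L}$ on $L(E)$, and $h$ is Griffiths negative iff $h_L$ has negative curvature (Chern curvature $\leq 0$). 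Over $\Bbb P(E)|_{\overline D}$, which is compact, the curvature of $h_{0,L}$ is a bounded real $(1,1)$-form, so $i\Theta(h_{0,L}) \leq C\,\pi_{\Bbb P(E)}^*\omega_0$ for some $C$, where now $\pi_{\Bbb P(E)}:\Bbb P(E)\to Y$; twisting by $e^{\lambda\rho\circ\pi_{\Bbb P(E)}}$ adds $-\lambda\,\pi_{\Bbb P(E)}^*(i\ddbar\rho)\le -\lambda c\,\pi_{\Bbb P(E)}^*\omega_0$, so for $\lambda \geq C/c$ the twisted metric has curvature $\leq 0$ on $\Bbb P(E)|_{\tilde D}$, i.e. negative curvature, hence the corresponding Finsler–Hermitian metric $h$ on $E|_{\tilde D}$ is Griffiths negative. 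Either way one sets $\tilde D = \{\rho < \delta\}$ with $\delta$ small enough that $\overline{\tilde D}$ is contained in the region where $\rho$ and the curvature bounds are controlled, completing the proof.
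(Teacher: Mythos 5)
Your proposal is correct and is essentially the paper's argument: the authors also take an arbitrary smooth Hermitian metric $\tilde h$ and twist it to $h=\tilde h e^{k\rho}$, noting $i\Theta(h)=i\Theta(\tilde h)-k\,i\ddbar\rho\otimes \mathrm{Id}_E$, so that for $k$ large the strict positivity of $i\ddbar\rho$ on a compact neighborhood of $\overline D$ forces Griffiths negativity there. Your preferred variant via $L(E)\to\Bbb P(E)$ and your direct total-space computation are just two equivalent ways of phrasing this same one-line curvature estimate, so no further comparison is needed.
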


\begin{proof} Pick an arbitrary smooth Hermitian metric $\tilde h$ on $E \to Y$. For $k \in \Bbb N$ high enough $h = \tilde h e^{k\rho }$ satisfies the requirement of the lemma, because $i\Theta(h)=i\Theta(\tilde{h})-ki\ddbar \rho\otimes Id_E$ (see  \cite[Chapter V]{De12}).
\end{proof}

The above result allows to shrink $Y$ (without changing $D$), so that $(E,h)$ is Griffiths strictly negative and smooth globally. By picking $Y := \rho^{-1}(-\infty,\varepsilon)$, we can assume that $Y$ is Stein. We make these assumptions throughout this section.  This will not lead to loss of generality, as our focus is on the restricted bundle $E|_{\overline{D}}$.  
Moreover, let $h_L=h\circ \tilde{p}$ be the associated metric on $L(E)$  and let $\alpha = - \Theta(h_L)$ denote the negative of the Chern curvature (1,1)-form of $h_L$. By Lemma \ref{lem: background_form_construct}, $\alpha$ is a K\"ahler form on $\Bbb P(E)$.

We will denote by $F^\mathcal M_-$ the collection of Griffiths negative Finsler metrics on $E \to D$. 
Given a Finsler metric $g$ on the boundary $E|_{\partial D}$, we are interested in finding a Griffiths negative Finsler metric $f$ on $E|_{\overline{D}}$ assuming the values of $g$ on $E|_{\partial D}$ that is extremal, as defined in the introduction. For this it is necessary to impose the condition $g_z=g(z,\cdot) \in \textup{PSH}(E_z), \ z \in \partial D$.

By $F^\mathcal M_g$  we denote the Finsler metrics $v \in F^\mathcal M_-$ such that  $v \leq g$ on $E|_{\partial D}$. By this last condition we mean that $\limsup_{E|_D \ni (y,\xi) \to (y',\xi')} v(y,\xi) \leq  g(y',\xi')$ for any $(y',\xi') \in E|_{\partial D}$. 
As $F^\mathcal M_g$ is stable under maximum it makes sense to consider a Perron type envelope $f_g$ associated to $g$:
\begin{equation}\label{eq: Perron_Finsler_def}
f_g := \sup_{v \in F^\mathcal M_g} v.
\end{equation}

There are a number of things we would like to know about $f_g$: does it assume the right boundary values? Is the supremum finite? More importantly, is $f_g$ an element of $F^\mathcal M_g$? Does it uniquely solve some PDE?

As we will show below, the answer to all these questions is in the affirmative.  
Before we introduce the Dirichlet problem(s) that our Griffiths extremal metric $f_g$ will solve, some preliminary work is necessary. For any $f \in F^\mathcal M_-$ we have that 
$$f_L = h_L e^{\varphi_f},$$ 
for some  function $\varphi_f: \Bbb P(E)|_D \to \Bbb R$. We obtain that $\alpha + i\ddbar \varphi_f = -\Theta(f_L) \geq 0$ on $\Bbb P(E)|_D$.
 
In particular, since there is a one-to-one correspondence between Finsler metrics $f$ on $E\to D$ and Hermitian metrics $f_L$ on $L(E) \to \Bbb P(E)|_D$, we get that there is a one-to-one correspondence between $\gamma \in F^\mathcal M_-$ and $\varphi_\gamma \in \textup{PSH}(\Bbb P(E)|_D,\alpha)$ (Proposition \ref{prop: Griff_Kob_negative_equiv}). We can take this correspondence one step further:

\begin{lemma}\label{lem: bijection_metric_psh}There is a one-to-one correspondence between the metrics $\eta \in F^\mathcal M_g$ and the potentials $\varphi_\eta \in \textup{PSH}_{\varphi_g}(\Bbb P(E)|_D,\alpha)= \{\chi \in \textup{PSH}(\Bbb P(E)|_D,\alpha) \textup{ s.t. } \limsup_{\Bbb P(E)|_{D} \ni z \to y} \chi(z) \leq \varphi_g:=\log \big(\frac{g_L}{h_L}\big)(y), \ y \in \Bbb P(E)|_{\partial D}\}$.
\end{lemma}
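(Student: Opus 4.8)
The statement is essentially a dictionary-style lemma: we already have (from Proposition \ref{prop: Griff_Kob_negative_equiv} and the paragraph after it) a bijection between Griffiths negative Finsler metrics $\gamma$ on $E|_D$ and $\alpha$-psh potentials $\varphi_\gamma$ on $\Bbb P(E)|_D$, via $f_L = h_L e^{\varphi_f}$. What remains is to check that this bijection respects the boundary conditions: $\eta \in F^{\mathcal M}_g$ (i.e. $\limsup \eta \le g$ on $E|_{\partial D}$) if and only if $\varphi_\eta$ lies in $\textup{PSH}_{\varphi_g}(\Bbb P(E)|_D,\alpha)$ (i.e. $\limsup \varphi_\eta \le \varphi_g$ on $\Bbb P(E)|_{\partial D}$).

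So the plan is as follows. First I would recall that the correspondence $\gamma \mapsto \varphi_\gamma$ is already established as a bijection from $F^{\mathcal M}_-$ onto $\textup{PSH}(\Bbb P(E)|_D,\alpha)$; since $F^{\mathcal M}_g \subset F^{\mathcal M}_-$ and $\textup{PSH}_{\varphi_g}(\Bbb P(E)|_D,\alpha) \subset \textup{PSH}(\Bbb P(E)|_D,\alpha)$, it suffices to verify that this bijection carries the first set onto the second, i.e. to prove the equivalence of the two boundary conditions. The key identity is that, pulling back along $\tilde p: L(E) \to E$ and using the relation $f_L(s) = f(\pi(\tilde p(s)),\tilde p(s))$ together with $f_L = h_L e^{\varphi_f}$, one has on the level of Finsler metrics on $E$ the pointwise formula $f(y,\xi) = h(y,\xi)\,e^{\varphi_f([y,\xi])}$ for $\xi \ne 0$ (where $[y,\xi] \in \Bbb P(E)$ is the line through $\xi$), since $h_L$ corresponds to the smooth Hermitian-in-general Finsler metric $h$ in exactly the same way. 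Thus $f = g$, or $f \le g$, on $E|_{\partial D}$ translates directly into $\varphi_f = \varphi_g$, resp. $\varphi_f \le \varphi_g$, on $\Bbb P(E)|_{\partial D}$, once one handles the $\limsup$ correctly.

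Concretely, to prove $\eta \in F^{\mathcal M}_g \Rightarrow \varphi_\eta \in \textup{PSH}_{\varphi_g}$: take $y \in \Bbb P(E)|_{\partial D}$ and a sequence $z_j \to y$ in $\Bbb P(E)|_D$; lift to a nonvanishing local section $s$ of $L(E)$ near $y$, so that $w_j := \tilde p(s(z_j)) \in E|_D$ converges to $w := \tilde p(s(y)) \in E|_{\partial D}$, with $w \ne 0$ (the zero section is a proper subvariety, and $\varphi_\eta$ as an $\alpha$-psh function is determined by its values off it, so we may restrict attention to $y$ with $w \neq 0$; alternatively work in the charts $U\times H_j\times\Bbb C$ used in the proof of Proposition \ref{prop: Griff_Kob_negative_equiv}). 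Then $\varphi_\eta(z_j) = \log \eta(\pi(w_j),w_j) - \log h(\pi(w_j),w_j)$, and since $h$ is continuous and nonvanishing off the zero section, $\limsup_j \varphi_\eta(z_j) = \limsup_j \log\eta(\pi(w_j),w_j) - \log h(\pi(w),w) \le \log g(\pi(w),w) - \log h(\pi(w),w) = \varphi_g(y)$, using the hypothesis $\limsup \eta \le g$ on $E|_{\partial D}$ and the continuity of $g$ and $h$. The reverse implication is symmetric: given $\varphi_\eta \le \varphi_g$ on $\Bbb P(E)|_{\partial D}$, for $(y',\xi') \in E|_{\partial D}$ with $\xi' \ne 0$ and $(y,\xi) \to (y',\xi')$ in $E|_D$, write $\eta(y,\xi) = h(y,\xi)e^{\varphi_\eta([y,\xi])}$, pass to the $\limsup$, use that $[y,\xi] \to [y',\xi']$, that $h$ is continuous, and the inequality on $\varphi$; when $\xi' = 0$ the inequality $\limsup \eta(y,\xi) \le g(y',0) = 0$ is automatic from Griffiths negativity of $\eta$ and the homogeneity/positivity of Finsler metrics, plus boundedness coming from comparison with $h$. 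Combined with Lemma \ref{lem: background_form_construct} ensuring $\alpha > 0$ and the already-known bijection, this establishes the claimed one-to-one correspondence.

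The only genuinely delicate point is the bookkeeping at the zero section of $E$ (equivalently, the indeterminacy of $[y,\xi]$ as $\xi \to 0$) and the interplay between the $\limsup$ taken in the total space $E|_D$ versus the one taken in $\Bbb P(E)|_D$. I expect the clean way to handle this is exactly the chart argument from the proof of Proposition \ref{prop: Griff_Kob_negative_equiv}: on $U \times H_j \times \Bbb C$, $\log \eta$ differs from $\log|s| + \varphi_\eta$ by the smooth bounded function $\log h$, so all the $\limsup$ comparisons reduce to elementary statements about usc functions on a product, and the boundary behavior in the fiber variable $s$ factors out harmlessly. Everything else is a direct translation through the dictionary $f_L = h_L e^{\varphi_f}$ and poses no real difficulty.
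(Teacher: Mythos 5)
You correctly reduce the lemma, via the dictionary $\eta_L = h_L e^{\varphi_\eta}$, to the equivalence of the two boundary $\limsup$ conditions; your forward direction and the backward direction at points $(y',\xi')$ with $\xi'\neq 0$ are fine and match what the paper dismisses as elementary. The genuine difficulty is exactly the point you flag — approach to the zero section of $E$ over $\partial D$ — and there your argument has a gap. For $E|_D \ni (y_j,\xi_j) \to (y',0) \in E|_{\partial D}$, writing $\eta(y_j,\xi_j) = h(y_j,\xi_j)\,e^{\varphi_\eta(y_j,[\xi_j])}$ gives $h(y_j,\xi_j)\to 0$, but you must also rule out $\varphi_\eta(y_j,[\xi_j]) \to +\infty$: the projectivized points $(y_j,[\xi_j])$ may approach $\Bbb P(E)|_{\partial D}$ in an uncontrolled way (or not converge at all), and the hypothesis $\varphi_\eta \in \textup{PSH}_{\varphi_g}(\Bbb P(E)|_D,\alpha)$ only provides a pointwise $\limsup$ bound at each fixed boundary point of $\Bbb P(E)$, not a bound uniform in the direction. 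Your justification — ``automatic from Griffiths negativity and homogeneity, plus boundedness coming from comparison with $h$'' — assumes precisely what has to be proved: an inequality $\eta \leq C\,h$ near $\partial D$ is equivalent to a uniform upper bound on $\varphi_\eta$, and no such bound follows from Griffiths negativity or fiberwise homogeneity alone. This is exactly why the paper invokes Lemma \ref{lem: YM_supersol}: every element of $\textup{PSH}_{\varphi_g}(\Bbb P(E)|_D,\alpha)$ lies below the solution $\phi$ of the linear problem \eqref{eq: YM_phi_eq}, i.e. $\eta_L \leq h_{YM}^g$, and since $h_{YM}^g$ is continuous up to the boundary one gets $\eta(y_j,\xi_j) \leq h_{YM}^g(y_j,[\xi_j],\xi_j) \to 0$.

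The missing step can in fact be supplied more cheaply than with the Hermitian--Yang--Mills metric: since $\Bbb P(E)|_{\overline D}$ is compact and $\varphi_g$ is continuous (hence bounded) on $\Bbb P(E)|_{\partial D}$, any $\alpha$-psh function on $\Bbb P(E)|_D$ whose $\limsup$ at every boundary point is at most $\varphi_g$ is bounded above: if $\varphi_\eta(z_j)\to+\infty$, a subsequential limit of $z_j$ is either an interior point, contradicting local boundedness above of psh functions, or a boundary point, contradicting the $\limsup$ hypothesis there. With $\sup_{\Bbb P(E)|_D}\varphi_\eta \leq C$ one obtains $\eta \leq e^C h$ off the zero section, and the desired convergence follows from continuity of $h$. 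But as written your proposal does not establish this uniform bound — the phrase ``comparison with $h$'' begs the question — and this bound is the crux of the backward implication.
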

\begin{proof} By the above discussion we only need to argue that  $\limsup_{E|_D \ni (y,\xi) \to (y',\xi')} \eta(y,\xi) \leq  g(y',\xi')$ is equivalent with $\limsup_{\Bbb P(E)|_{D} \ni a \to b} \varphi_\eta(a) \leq \log \big(\frac{g_L}{h_L}\big)(b)$  for any $b \in \Bbb P(E)|_{\partial D}$ and $\eta \in F^\mathcal M_-$.

The forward direction is elementary, so we will only argue the backward direction.
Let us assume that $\varphi_\eta \in \textup{PSH}_{\varphi_g}(\Bbb P(E)|_D,\alpha)$.

By going back and forth between $E \to D$ and $L(E) \to \Bbb P(E)|_D$ one can see that 
if $\xi' \in E_{y'}$ with $y' \in \partial D$ and $\xi'\neq 0$, then 

$$\limsup_{E|_D \ni (y,\xi) \to (y',\xi')} \eta(y,\xi)=\limsup_{E|_D \ni (y,\xi) \to (y',\xi')} \eta_L(y,[\xi],\xi)=\limsup_{E|_D \ni (y,\xi) \to (y',\xi')} h_L e^{\varphi_\eta}(y,[\xi])(\xi) \leq  g(y',\xi').$$

What remains to argue is the case $\xi' =0$. We argue this part by contradiction: let $y'_j \to y'$ and $\xi'_j \to \xi'= 0$ such that $\limsup \eta(y_j',\xi'_j)> \varepsilon$. In fact, after taking a subsequence, we can even assume that $\lim \eta(y_j',\xi'_j)> \varepsilon$, and each $\xi'_j \in E_{y'_j}$ is non-zero. Homogeneity allows to write:
\begin{equation} \label{eq: zeta_def}
\eta(y'_j,\xi'_j) = h(y'_j,\xi'_j) \eta(y'_j,\zeta'_j), 
 \ \ \textup{ where } \ \zeta'_j = \frac{\xi'_j}{h(y'_j,\xi'_j)}.
 \end{equation}
Since the $h$-length of $\zeta'_j$ is equal to $1$, after possibly taking subsequence, there exists $\zeta' \in E_{y'}$ (also with $h$-length equal to $1$) such that $\zeta'_j \to \zeta'$. Then the first step gives $\limsup_j \eta(y'_j,\zeta'_j)   \leq  g(y',\zeta')$.

Using \eqref{eq: zeta_def} we arrive at $\lim_j \eta(y'_j,\xi'_j)=\lim_j h(y'_j,\xi'_j)\eta(y'_j,\zeta'_j) \leq    g(y',\zeta') \limsup_j h(y'_j,\xi'_j) =0,$  a contradiction.
\end{proof}

Before looking at \eqref{eq: Griff_extr_equation_intr}, we need to consider the analgous Dirichlet problem on the projectivization, with solutions interpreted in the language of Bedford--Taylor theory: for a function $\psi$ on $\Bbb P(E)|_{\overline{D}}$

\begin{equation}\label{eq: Giff_extremality_system}
    \begin{cases}
    (\alpha + i\ddbar \psi)^{m+r-1}=0 \text{  on $\Bbb P (E)|_D$},\\
    {\psi} \in \textup{PSH}(\Bbb P (E)|_D,\alpha) \cap L^\infty,\\
     \psi = \log \frac{g_L}{h_L} \text{  on $\Bbb P(E)|_{\partial D}$}.
        \end{cases} 
\end{equation}

\begin{theorem}\label{thm: extremal_PDE} $f_g$ is the unique element of $F^\mathcal M_g$ for which $\varphi_{f_g}$ is bounded and solves \eqref{eq: Giff_extremality_system}. In addition, $\varphi_{f_g}$ and $f_g$ are continuous up to the boundary.
\end{theorem}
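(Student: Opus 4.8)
The plan is to push the whole problem down to the projectivization via Lemma \ref{lem: bijection_metric_psh}, where it becomes a Dirichlet problem for the degenerate complex Monge--Amp\`ere operator $(\alpha+i\ddbar\,\cdot\,)^{m+r-1}$ on the relatively compact domain $\Bbb P(E)|_{\overline D}$. Under the bijection of Lemma \ref{lem: bijection_metric_psh} the family $F^{\mathcal M}_g$ corresponds to $\textup{PSH}_{\varphi_g}(\Bbb P(E)|_D,\alpha)$, and (since the correspondence $v\leftrightarrow v_L=h_Le^{\varphi_v}$ is pointwise order preserving) $f_g$ corresponds to the envelope $\Phi:=\sup\{\chi:\chi\in\textup{PSH}_{\varphi_g}(\Bbb P(E)|_D,\alpha)\}$, in the sense that $f_{g,L}=h_Le^{\Phi}$. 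First I would note this family is nonempty — it contains the constant $\inf_{\Bbb P(E)|_{\partial D}}\varphi_g$, finite because $\varphi_g=\log(g_L/h_L)$ is continuous on the compact space $\Bbb P(E)|_{\partial D}$ — and that every member is $\le\phi$ by Lemma \ref{lem: YM_supersol}; hence $\Phi$ is bounded, and as $\Phi^*\le\phi$ keeps the boundary inequality, $\Phi^*\in\textup{PSH}_{\varphi_g}(\Bbb P(E)|_D,\alpha)$, forcing $\Phi^*=\Phi$, so $\varphi_{f_g}=\Phi$ is bounded and $\alpha$-psh. That it satisfies $(\alpha+i\ddbar\Phi)^{m+r-1}=0$ is the usual Bremermann/Walsh balayage argument: on a small ball $B\Subset\Bbb P(E)|_D$ replace $\Phi|_B$ by the Bedford--Taylor solution of the homogeneous Monge--Amp\`ere problem in $B$ with data $\Phi|_{\partial B}$; the glued function $\widetilde\Phi$ is $\alpha$-psh, $\ge\Phi$, agrees with $\Phi$ off $B$, hence still has $\limsup\le\varphi_g$ at $\partial D$, so Lemma \ref{lem: YM_supersol} gives $\widetilde\Phi\le\phi$ and thus $\widetilde\Phi\in\textup{PSH}_{\varphi_g}(\Bbb P(E)|_D,\alpha)$; maximality forces $\widetilde\Phi=\Phi$ on $B$, and $B$ was arbitrary.

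Next I would treat the boundary values of $\Phi$. The upper bound $\limsup\Phi\le\varphi_g$ at $\partial D$ is immediate from $\Phi\le\phi$ and the continuity of $\phi$ up to $\partial D$ with $\phi=\varphi_g$ there. The real content is the matching lower bound, for which I would produce a subsolution: a $\underline\psi\in\textup{PSH}_{\varphi_g}(\Bbb P(E)|_D,\alpha)$ continuous up to $\partial D$ with $\underline\psi=\varphi_g$ there, so that $\underline\psi\le\Phi\le\phi$ pins down the boundary values and the continuity of $\Phi=\varphi_{f_g}$. I would build $\underline\psi$ first for \emph{regular} data — $\varphi_g$ smooth with strictly positive fibrewise Hessian along $\Bbb P(E_z)$, $z\in\partial D$. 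Extending $\varphi_g$ to a smooth, still fibrewise strictly psh $\widehat\varphi$ on a collar of $\Bbb P(E)|_{\partial D}$, and using a local holomorphic support function $h_{z_0}$ of the strongly pseudoconvex $D$ at $z_0\in\partial D$ ($\textup{Re}\,h_{z_0}(z_0)=0$, $\textup{Re}\,h_{z_0}\le -c|z-z_0|^2$ near $z_0$, $\textup{Re}\,h_{z_0}\le -c'<0$ away from $z_0$), I set
\[
\underline\psi_{z_0}:=\widehat\varphi+A\,\textup{Re}(h_{z_0}\circ\pi)+C(\rho\circ\pi).
\]
Since $\textup{Re}(h_{z_0}\circ\pi)$ is pluriharmonic, $\alpha+i\ddbar\underline\psi_{z_0}=\alpha+i\ddbar\widehat\varphi+C\pi^*i\ddbar\rho$; in a local base/fibre block decomposition the fibre block is bounded below by a positive form (regularity), the mixed block is bounded, and $C\pi^*i\ddbar\rho$ enlarges the base block at will (strong pseudoconvexity makes $\pi^*i\ddbar\rho$ strictly positive in the base directions), so a Schur-complement estimate gives $\alpha+i\ddbar\underline\psi_{z_0}\ge0$ for $C$ large (independent of $A$). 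Then $A$ large makes $\underline\psi_{z_0}\le\inf\varphi_g$ near the lateral boundary of the chart and deep in the collar, so $\max(\underline\psi_{z_0},\inf\varphi_g)$ extends by the constant $\inf\varphi_g$ to a member of $\textup{PSH}_{\varphi_g}(\Bbb P(E)|_D,\alpha)$ that equals $\varphi_g$ on $\Bbb P(E_{z_0})$, and the usc-regularized supremum over $z_0\in\partial D$ is the required $\underline\psi$. For general continuous data with $\varphi_g|_{\Bbb P(E_z)}$ $\alpha$-psh I would approximate $z\mapsto\varphi_g(z,\cdot)$ uniformly on $\Bbb P(E)|_{\partial D}$ by regular data $\varphi_{g^{(j)}}$ (fibrewise Bergman--Demailly regularization on $\Bbb P(E_z)\cong\Bbb P^{\,r-1}$, then a mollification in $z$ and a vanishing strictly-psh correction), and invoke the stability estimate $\|\Phi^{(j)}-\Phi\|_{C^0(\Bbb P(E)|_D)}\le\|\varphi_{g^{(j)}}-\varphi_g\|_{C^0(\Bbb P(E)|_{\partial D})}$, immediate since adding a constant to the boundary datum adds it to the envelope; the $\Phi^{(j)}$ are continuous up to $\partial D$ by the regular case, hence so is the uniform limit $\Phi$, with $\Phi=\varphi_g$ on $\partial D$. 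Transporting back through the biholomorphism $\widetilde p$ (and using homogeneity at the zero section) yields continuity of $f_g$ on the total space of $E|_{\overline D}$ with $f_g=g$ on $E|_{\partial D}$.

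For uniqueness: if $\eta\in F^{\mathcal M}_g$ has $\varphi_\eta$ bounded and solving \eqref{eq: Giff_extremality_system}, then $\varphi_\eta\in\textup{PSH}_{\varphi_g}(\Bbb P(E)|_D,\alpha)$, whence $\varphi_\eta\le\Phi=\varphi_{f_g}$; conversely $\varphi_{f_g}$ and $\varphi_\eta$ are bounded $\alpha$-psh functions on $\Bbb P(E)|_D$ with $(\alpha+i\ddbar\,\cdot\,)^{m+r-1}=0$ and the same continuous boundary values $\varphi_g$, so the comparison principle for the complex Monge--Amp\`ere operator on the relatively compact domain $\Bbb P(E)|_D$ (applied with both Monge--Amp\`ere measures vanishing) gives $\varphi_{f_g}\le\varphi_\eta$; hence $\varphi_\eta=\varphi_{f_g}$ and $\eta=f_g$.

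The hard part is the construction of $\underline\psi$, i.e.\ the attainment of the boundary data. The obvious ``peak function'' barrier localizes only along the base and vanishes quadratically there, so it cannot absorb even the Lipschitz — let alone merely continuous — variation of $\varphi_g$ along $\partial D$; this is precisely what forces the two-stage argument through regular data, where a Schur-complement computation genuinely couples strong pseudoconvexity of $D$ with the fibrewise strict positivity of the regularized datum, followed by a uniform fibrewise regularization and the stability estimate. Everything else — the Perron/balayage step, Lemma \ref{lem: YM_supersol} serving as an upper barrier, and the comparison principle for uniqueness — is routine.
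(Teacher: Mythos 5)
Your overall route is the same as the paper's: reduce to the projectivization via Lemma \ref{lem: bijection_metric_psh}, run a Perron/balayage argument there, use Lemma \ref{lem: YM_supersol} (i.e.\ the Hermitian--Yang--Mills solution $\phi$) as an upper barrier, regularize the data fibrewise \`a la Demailly, mollify along $\partial D$, and gain strict positivity by adding a large multiple of $\rho\circ\pi$. But there is a genuine gap at the heart of the continuity claim: \emph{interior} continuity of the envelope is never established. The sandwich $\underline\psi\le\Phi\le\phi$ with $\underline\psi=\phi=\varphi_g$ on $\Bbb P(E)|_{\partial D}$ only gives continuity of $\Phi$ \emph{at boundary points}; in the interior, $\Phi$ is at this stage merely a bounded $\alpha$-psh (hence usc) function. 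Your closing step --- ``the $\Phi^{(j)}$ are continuous up to $\partial D$ by the regular case, hence so is the uniform limit'' --- assumes exactly what is missing: for the regular data $\varphi_{g^{(j)}}$ your barrier argument again yields only boundary attainment, not continuity of $\Phi^{(j)}$ on $\Bbb P(E)|_D$, so the uniform stability estimate has nothing continuous to pass to the limit. The classical bridge from boundary attainment to interior continuity (Walsh's theorem) uses translations of the domain in $\Bbb C^n$ and is unavailable on $\Bbb P(E)|_{\overline D}$; this is precisely why the paper instead invokes Blocki's existence theorem \cite[Theorem 26]{Bl13}, which for the smooth boundary data $\chi^k$ (extended so that $\alpha+i\ddbar\chi^k>0$ on $\Bbb P(E)|_{\overline D}$) produces a solution that is Lipschitz up to the boundary and coincides with the Perron envelope; the uniform decreasing limit of these continuous solutions is then continuous and is identified with $\varphi_{f_g}$ via Lemma \ref{lem: bijection_metric_psh}. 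Without such an ingredient (a gradient estimate, or some substitute for Walsh on manifolds with boundary), your argument proves boundedness, the vanishing of $(\alpha+i\ddbar\varphi_{f_g})^{m+r-1}$, and the boundary values, but not the asserted continuity of $\varphi_{f_g}$ and $f_g$ up to the boundary.

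A secondary, fixable point: in the uniqueness step, the comparison principle ``applied with both Monge--Amp\`ere measures vanishing'' is vacuous as written (it reads $0\le 0$). To get $\varphi_{f_g}\le\varphi_\eta$ you need the domination principle for maximal functions --- e.g.\ compare $\varphi_{f_g}$ with $(1-\delta)\varphi_\eta+\delta w$ for a bounded $w$ with $\alpha+i\ddbar w\ge \delta'\alpha$ and let $\delta\to 0$ --- or simply quote the version of the comparison principle for bounded solutions with continuous boundary data, \cite[Theorem 21]{Bl13}, as the paper does; the reverse inequality $\varphi_\eta\le\varphi_{f_g}$ is indeed immediate from the Perron definition.
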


\begin{proof} Uniqueness of solutions  to \eqref{eq: Giff_extremality_system} is a consequence of \cite[Theorem 21]{Bl13}. Although this uniqueness theorem is stated for continuous solutions, its proof goes through for bounded solutions as long as $\log \frac{g_L}{h_L}$ is continuous on $\Bbb P(E)|_{\partial D}$.

To address existence, we first construct a sequence of approximate $C^{0,1}$ subsolutions. 

First we construct a sequence of smooth approximate boundary data. This is a standard technical argument, so we will be brief. Since $\varphi_g= \log \frac{g_L}{h_L} \in C(\Bbb P(E)|_{\partial D})$ and $\varphi_g(z) \in \textup{PSH}(\Bbb P(E_z),\alpha_z)$ for all $z \in \partial D$, we can use Remark \ref{rem: split_holom_ex}(b) and Theorem \ref{thm: approx_family} to get $\chi^k  \in C^\infty(\Bbb P(E)|_{\partial D})$ such that $\alpha_z + i\ddbar \chi^k_z>0, \ z \in \partial D$ and $\chi^k \searrow \varphi_g$ uniformly. 

Now one extends $\chi^k$ smoothly to $\Bbb P(E)|_{\overline{D}}$, such that  $\alpha_z + i\ddbar \chi^k_z>0, \ z \in \overline{D}$. Since we are only asking for positivity in the fiber directions, this can be done by extending $\chi^k$ first arbitrarily, then multiplying this extension by an appropriate smooth cutoff function  of the boundary $\Bbb P(E)|_{\partial D}$ that is constant on the fibers.

After adding $l\rho$ to this smooth extension (with $l > 0$ sufficiently big) we get that $\alpha + i\ddbar \chi^k >0$ on $\Bbb P(E)|_{\overline{D}}$. Since $\rho = 0$ on $\partial D$, this last step did not change the values of $\chi^k$ on $\partial D$.

Now let $\psi^k$ be the solution to the following PDE:
\begin{equation}\label{eq: Giff_extremality_system_k}
    \begin{cases}
    (\alpha + i\ddbar \psi^k)^{m+r-1}=0 \text{  on $\Bbb P (E)|_D$},\\
    {\psi^k} \in \textup{PSH}(\Bbb P (E)|_D,\alpha) \cap L^\infty,\\
     \psi^k = \chi^k \text{  on $\Bbb P(E)|_{\partial D}$}.
        \end{cases} 
\end{equation}
By Lemma \ref{lem: Lipschitz_solution} below, $\psi^k$ exists and is Lipschitz up to the boundary. Moreover, 
\begin{equation}\label{eq: Perron_approx}
\psi^k := \sup_{v \in \textup{PSH}_{\chi^k}(\Bbb P(E)|_{\overline{D}},\alpha)} v.
\end{equation}
Additionally, by the comparison principle \cite[Theorem 21]{Bl13} we have that the $\{\psi^k\}_k$ is a decreasing sequence of continuous functions, because so is the boundary data $\{\chi^k\}_k$. Moreover, since $\sup_{\Bbb P(E)|_{\partial D}}|\chi^k - \chi^l| \to 0$ as $k,l \to 0$ the same comparison principle implies that $\sup_{\Bbb P(E)|_{\overline{D}}}|\psi^k - \psi^l| \to 0$ as $k,l \to 0$. As a result $\{\psi^k\}_k$ is a Cauchy sequence with respect to uniform convergence, converging to $\psi \in \textup{PSH}(\Bbb P (E)|_D,\alpha) \cap C(\Bbb P (E)|_{\overline{D}})$. Hence $\psi$ is continuous and is equal to $\varphi_g$ on the boundary. Basic theorems of Bedford--Taylor theory now imply that $(\alpha + i\ddbar \psi)^{m+r-1}=0$. Lastly, \eqref{eq: Perron_approx} gives
$$\psi := \sup_{v \in \textup{PSH}_{\varphi_g}(\Bbb P(E)|_{\overline{D}},\alpha)} v.$$
Using Lemma \ref{lem: bijection_metric_psh} we obtain that $\psi = \varphi_{f_g}$, finishing the proof.
\end{proof}

\begin{lemma}\label{lem: Lipschitz_solution} Equation \eqref{eq: Giff_extremality_system_k} has a solution $\psi^k \in C^{0,1}(\Bbb P(E)|_{\overline{D}})$, with $\|\psi^k\|_{C^{0,1}}$ being controlled by $\| \chi^k\|_{C^{0,1}}$.
\end{lemma}

\begin{proof} This follows from \cite[Theorem 26]{Bl13}. To start we set up the system
\begin{equation}\label{eq: Giff_extremality_system_k_eps}
    \begin{cases}
    (\alpha + i\ddbar \psi_\varepsilon^k)^{m+r-1}=\varepsilon \alpha^{m+r-1} \text{  on $\Bbb P (E)|_D$},\\
    {\psi_\varepsilon^k} \in \textup{PSH}(\Bbb P (E)|_D,\alpha) \cap L^\infty,\\
     \psi_\varepsilon^k = \chi^k \text{  on $\Bbb P(E)|_{\partial D}$}.
        \end{cases} 
\end{equation}
Since the datum of this elliptic PDE is smooth, \cite[Theorem 26]{Bl13} and \cite[Theorem 19]{Bl13} is applicable to give a smooth solution $\psi_\varepsilon^k$, with $\|\psi_\varepsilon^k\|_{C^{0,1}}$ being controlled by $\| \chi^k\|_{C^{0,1}}$, and is independent of $\varepsilon>0$. After taking a subsequence, the Arzel\`a--Ascoli theorem guarantees that $ \psi_\varepsilon^k \to \psi^k$ uniformly on $\Bbb P(E)|_{\overline{D}}$, as $\varepsilon \to 0$, and $\|\psi^k\|_{C^{0,1}}$ is controlled by $\| \chi^k\|_{C^{0,1}}$. That $\psi^k$ solves \eqref{eq: Giff_extremality_system_k} follows from the convergence of complex Monge--Amp\`ere measures along uniformly convergent sequences of bounded potentials \cite{BT76}.
\end{proof}

We now start focusing on the Dirichlet problem of Theorem \ref{thm: main PDE theorem}, and we will eventually show that $f_g$ is the unique continuous solution to this PDE, that we now recall:
\begin{equation}\label{eq: Griff_extr_equation}
    \begin{cases}
    (i\partial \bar{\partial} f)^{m+r}=0 \text{  on $E|_D$}\\
    f \in \textup{PSH}(E|_D),\\
    f=g, \text{  on $E|_{\partial D}$}.
        \end{cases}
\end{equation}
Again, $g \in C(E|_{\partial D})$ is a Finsler metric satisfying $g_z := g(z,\cdot) \in \textup{PSH}(E_z), \ z \in \partial D$. 

Such Dirichlet problems are often solvable using a  Perron process that we now consider. Let $\textup{PSH}_g(E|_D)$ be the set of psh functions $u$ on $E|_D$ such that $\limsup_{E|_D \ni (z',\xi') \to (z,\xi) \in E|_{\partial D}}u(z',\xi') \leq g(z,\xi)$, and consider the following upper envelope:
$$u_g := \textup{usc}\Big(\sup_{v \in \textup{PSH}_g(E|_D)} v\Big),$$

Here $\textup{usc}(\cdot)$ is the upper semicontinuous regularization. Compared to \eqref{eq: Perron_Finsler_def}, we note that the elements of $\textup{PSH}_g(E|_D)$ are not homogeneous in the fibers of $E$. Additionally, we do not even know if $u_g$ is bounded above.  If one can show that $u_g \in \textup{PSH}_g(E|_{D})$, then automatically $(i\partial \bar{\partial} u_g)^{m+r}=0$ by the classical balayage argument of Bedford--Taylor \cite{BT76}. We confirm all of this and more in the main theorem of this section:

\begin{theorem}\label{thm: main_PDE_noncompact} $u_g$ is locally bounded above and $u_g \in \textup{PSH}_g(E|_{D})$. Moreover, $u_g = f_g$, automatically implying that $u_g$ solves \eqref{eq: Griff_extr_equation}. Lastly, $u_g$ is also the unique solution in $F^\mathcal M_g$ to \eqref{eq: Griff_extr_equation}.
\end{theorem}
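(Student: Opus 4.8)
The plan is to leverage the heavy lifting already done on the projectivization in Theorem \ref{thm: extremal_PDE}, transporting everything back to the total space $E|_{\overline D}$ via the biholomorphism $\tilde p: L(E) \to E$ (away from zero sections). First I would establish the inequality $u_g \geq f_g$: since every $v \in F^\mathcal M_g$ is in particular a psh function on $E|_D$ with $v \leq g$ on the boundary in the $\limsup$ sense, we have $F^\mathcal M_g \subset \textup{PSH}_g(E|_D)$, so $f_g \leq u_g$ pointwise, and hence $f_g \leq u_g$ after usc-regularization (using that $f_g$ is already usc, indeed continuous, by Theorem \ref{thm: extremal_PDE}). The reverse inequality $u_g \leq f_g$ is the crux and is where local boundedness must be proved simultaneously.

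For the reverse direction, the key observation is that $f_g$ is a genuine barrier from above for every competitor $v \in \textup{PSH}_g(E|_D)$. Fix such a $v$. On $E|_D \setminus (\text{zero section})$, transport $f_g$ to $L(E)$: we have $f_g \circ \tilde p = h_L e^{\varphi_{f_g}}$ with $\varphi_{f_g}$ the bounded solution of \eqref{eq: Giff_extremality_system}. The homogeneity of $f_g$ in the fibers means that, in a local trivialization $U \times \Bbb C^r$ of $E$, $f_g(z,\xi) = |\xi| f_g(z, \xi/|\xi|)$ and the fiber-unit-sphere factor is controlled by $\varphi_{f_g}$ pulled back along the sphere; since $\varphi_{f_g}$ is bounded on $\Bbb P(E)|_{\overline D}$, we get two-sided bounds $c|\xi| \leq f_g(z,\xi) \leq C|\xi|$ on compact subsets of $E|_{\overline D}$. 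In particular $f_g$ is locally bounded, and since $u_g \geq f_g$ I also want the matching upper bound on $u_g$. To get it, I would run the standard comparison/Perron argument on a slightly enlarged relatively compact piece: for $v \in \textup{PSH}_g(E|_D)$, consider $\max(v, f_g - \epsilon \log|\xi|^2 \cdot 0)$ — more cleanly, compare $v$ against the family of Griffiths-negative metrics majorizing $g$. The right move is to use that $f_g - v$ is, near the boundary $E|_{\partial D}$, bounded above by a quantity tending to $0$ (both have boundary values $\leq g$, and $f_g$ is continuous up to $\partial D$ attaining exactly $g$), and then apply the comparison principle for the degenerate Monge–Ampère operator $(i\ddbar \cdot)^{m+r}$ on $E|_D$ — but here one must handle non-compactness of $E|_D$, which is precisely the difficulty the authors flag. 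I expect this to be dealt with by first intersecting with the sub-level sets $\{f_g < R\}$ (relatively compact by the bound just established, once $\partial D$ is handled), noting $v \leq f_g$ on the "lateral" boundary $\{f_g = R\}$ by homogeneity comparison, and $v \leq g = f_g$ on the boundary over $\partial D$, then concluding $v \leq f_g$ inside by the comparison principle on this bounded piece. Taking sup over $v$ and then usc-regularizing yields $u_g \leq f_g$; combined with the first step, $u_g = f_g$.

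Once $u_g = f_g$ is established, the remaining assertions follow formally: $f_g \in F^\mathcal M_g$ and is continuous up to the boundary by Theorem \ref{thm: extremal_PDE}, hence $u_g \in \textup{PSH}_g(E|_D)$; and since $u_g$ equals its own Perron envelope, the classical Bedford–Taylor balayage argument \cite{BT76} forces $(i\ddbar u_g)^{m+r} = 0$ on $E|_D$, so $u_g$ solves \eqref{eq: Griff_extr_equation}. For uniqueness within $F^\mathcal M_g$: given any solution $f' \in F^\mathcal M_g$, we have $f' \in \textup{PSH}_g(E|_D)$ so $f' \leq u_g = f_g$; conversely $f'$ being Griffiths negative with the right boundary inequality places it in $F^\mathcal M_g$, so $f' \leq f_g$ — wait, that gives only one direction, so for the reverse one argues symmetrically that $f_g \leq f'$ using that $\varphi_{f'}$ solves \eqref{eq: Giff_extremality_system} and invoking the uniqueness clause of Theorem \ref{thm: extremal_PDE} (which rests on \cite[Theorem 21]{Bl13}), forcing $\varphi_{f'} = \varphi_{f_g}$ and hence $f' = f_g$.

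The main obstacle is the non-compactness of $E|_D$ in the comparison argument of the second paragraph: the comparison principle for degenerate complex Monge–Ampère is classically stated on compact-closure domains, so the whole point is to genuinely exploit fiber-homogeneity (equivalently, the projectivization $\Bbb P(E)|_{\overline D}$, which \emph{is} compact over $\overline D$) to reduce to the bounded setting. Concretely, I anticipate that the cleanest route entirely bypasses a direct comparison on $E|_D$: instead, push the competitor $v$ down — note $v \circ \tilde p$ need not be homogeneous, but $\log v$-type manipulations and the correspondence in Lemma \ref{lem: bijection_metric_psh} plus Lemma \ref{lem: YM_supersol} already pin down that any homogeneous competitor is $\leq f_g$; the non-homogeneous ones in $\textup{PSH}_g(E|_D)$ must be controlled by a separate argument, likely by regularizing $v$ and using that psh functions on $E|_D$ with linear growth in fibers are dominated by their homogenizations. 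Getting this homogenization/domination step right, uniformly up to $\partial D$, is the technical heart.
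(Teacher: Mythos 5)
Your easy direction ($f_g \leq u_g$) and your formal deductions at the end are fine, but the crux --- local boundedness above of $u_g$ and the reverse inequality $u_g \leq f_g$ --- is not actually proved, and the route you sketch does not work. The comparison argument on the sublevel sets $\{f_g < R\}$ is circular: a general competitor $v \in \textup{PSH}_g(E|_D)$ is \emph{not} fiberwise homogeneous and carries no a priori bound whatsoever on the lateral boundary $\{f_g = R\}$, so the claim ``$v \leq f_g$ there by homogeneity comparison'' assumes exactly what is to be shown; your fallback (``dominate non-homogeneous competitors by their homogenizations, which have linear growth'') is likewise not an argument, since competitors have no a priori fiber growth control --- you yourself flag this as the unproved ``technical heart.'' Note also that the two-sided bound $c|\xi| \leq f_g \leq C|\xi|$ only bounds $u_g$ from \emph{below}; the upper bound is the nontrivial part. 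The paper closes this gap by two ideas absent from your proposal: (a) since $Y$ may be taken Stein, Cartan's Theorem A gives global sections $s_1,\dots,s_k$ spanning $E$ over $\overline D$; restricting any competitor to the holomorphic slices $z \mapsto \sum_j \lambda_j s_j(z)$ and dominating it by the harmonic function with boundary data $g\circ\phi(\cdot,\lambda)$ yields local boundedness above of $u_g$ and the boundary $\limsup$ condition, hence $u_g \in \textup{PSH}_g(E|_D)$ and, by Bedford--Taylor balayage \cite{BT76}, the equation; (b) the family $\textup{PSH}_g(E|_D)$ is invariant under $v \mapsto v(\cdot,\lambda\,\cdot)/|\lambda|$, so the envelope $u_g$ is itself homogeneous, and combined with $f_g \leq u_g$ and $u_g(\cdot,0)=0$ it is a Griffiths negative Finsler metric, i.e.\ $u_g \in F^{\mathcal M}_g$, whence $u_g \leq f_g$ directly from the definition of $f_g$ as the envelope over $F^{\mathcal M}_g$. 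No comparison principle on the non-compact total space is needed at all.

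There is a second, smaller gap in your uniqueness argument: you assert that for another solution $f' \in F^{\mathcal M}_g$ of \eqref{eq: Griff_extr_equation} the potential $\varphi_{f'}$ solves the projectivized problem \eqref{eq: Giff_extremality_system}, and then invoke the uniqueness clause of Theorem \ref{thm: extremal_PDE}. But the passage from $(i\ddbar f')^{m+r}=0$ on $E|_D$ to $(\alpha+i\ddbar\varphi_{f'})^{m+r-1}=0$ on $\Bbb P(E)|_D$ is precisely what must be justified. The paper argues in the contrapositive: if $\varphi_{f'} \neq \varphi_{f_g}$, then by uniqueness on the projectivization the measure $(\alpha+i\ddbar\varphi_{f'})^{m+r-1}$ charges some open set, and the explicit computation of the unnumbered lemma following the theorem (showing $\big(i\ddbar(e^{h}|\xi|)\big)^{k+1}$ dominates $e^{(k+1)h}(i\ddbar h)^k\wedge i\,\partial\xi\wedge\bar\partial\xi$ up to a positive factor on an annulus in the fiber) transfers this mass to $(i\ddbar f')^{m+r}$ on an open subset of $E|_D$, contradicting the equation upstairs. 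Without this transfer lemma (or a substitute), your uniqueness step does not close.
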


\begin{proof} First some preliminary analysis, to argue that  $u_g \in \textup{PSH}_g(E|_D):$ since $Y$ is Stein, by Cartan's Theorem A \cite[Theorem 7.28]{Ho90}, there exist sections $s_1,\ldots,s_k \in H^0(Y,E)$ that span $E$ on $\overline{D}$. This gives a surjective morphism of bundles $\phi: \Bbb C^k|_{\overline{D}}\to E|_{\overline D}$ given by the formula $\phi(\lambda_1,\ldots,\lambda_k) = \sum_j \lambda_j s_j$. Moreover, we have that $v \circ \phi \in \textup{PSH}_{g \circ \phi}(\Bbb C^k|_{D})$ for any $v \in \textup{PSH}_g(E|_D)$.

Let $h_\lambda \in C(\overline{D}) \cap C^\infty(D)$ denote the harmonic function for which $h_\lambda|_{\partial D} = g\circ \phi(\cdot,\lambda)$. By the maximum principle for harmonic functions, for $\varepsilon >0$ there exists $\delta >0$ such that $|h_{\lambda'} - h_\lambda| \leq \varepsilon$ for $|\lambda - \lambda'| \leq \delta$, i.e., $(z,\lambda) \to h_\lambda(z)$ is continuous. 

Since $v\circ \phi(z,\lambda) \leq h_\lambda(z)$ for $v \in \textup{PSH}_g(E|_D)$, we get that also $u_g \circ \phi(z,\lambda) \leq h_\lambda(z),$ ultimately giving:
$$\limsup_{D \times \Bbb C^k \ni (z',\lambda') \to (z,\lambda) \in \partial D \times \Bbb C^k} u_g \circ \phi(z',\lambda') \leq g \circ \phi(z,\lambda).$$

This implies automatically that $u_g \in \textup{PSH}_g(E|_{D})$. As discussed before the proof, we immediately get that  $(i\partial \bar{\partial} u_g)^{m+r}=0$. 

If $\lambda \in \Bbb C^*$ and $v\in \textup{PSH}_g(E|_{D})$ then $v(z,\lambda \xi)/|\lambda| \in \textup{PSH}_g(E|_{D})$ due to homogeneity of $g$, it follows that $u_g(z,\lambda \xi)/|\lambda|=u_g(z,\xi)$, proving the homogeneity of $u_g$. Since $f_g\leq u_g$ and $f_g$ is a Finsler metric, $u_g(z,\xi)=0$ implies $\xi=0$; on the other hand, we know from above that $u_g \circ \phi(z,0) \leq h_0(z)$, but $h_0(z)$ is identically zero due to $h_0|_{\partial D} = g\circ \phi(\cdot,0)=0$, so $u_g(z,0)=0$. We have just shown that $u_g$ is a Finsler metric, i.e., $u_g\in F^\mathcal M_g$, implying that $u_g \leq f_g$, i.e. $u_g = f_g$. Since $f_g$ assumes the correct boundary values, so does $u_g$, and we obtain that $u_g$ solves \eqref{eq: Griff_extr_equation}, as desired.

Now we discuss uniqueness of $u_g$. Let $v \in F^\mathcal M_g$ be another solution to \eqref{eq: Griff_extr_equation}. Then naturally $v \leq f_g$ and $\varphi_v \leq \varphi_{f_g}$ with $\varphi_v \neq \varphi_{f_g}$. Since we have uniqueness of solutions to \eqref{eq: Giff_extremality_system} already established, we obtain that the measure $(\alpha + i\ddbar \varphi_v)^{m+r-1}$ puts mass on some open set $U \subset \Bbb P(E)|_D$. Since  $L(E)$ and $E$ biholomorphic away from the zero sections, and $v_L = h_L e^{\varphi_v}$, the lemma below then implies that $(i\ddbar v)^{m+r}$ also puts mass on an open subset of $E|_{D}$, a contradiction with the fact that $v$ solves \eqref{eq: Griff_extr_equation}.
\end{proof}

\begin{lemma} Suppose that $V \subset \Bbb C^{k}$ open and $U \subset V$ is a relatively compact open subset. Let $h \in \textup{PSH}(V) \cap L^\infty(V)$ be such that $\int_U(i\ddbar h)^k>0.$ Then we have that $e^{h(z)}|\xi| \in \textup{PSH}(V \times \Bbb C)$ and $\int_{U \times \{B(0,1) \setminus \overline{B(0,\frac{1}{2})}\}}(i\ddbar (e^{h(z)}|\xi|)^{k+1} > 0$. 
\end{lemma}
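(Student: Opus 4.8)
First I would verify the plurisubharmonicity claim. The function $(z,\xi)\mapsto e^{h(z)}|\xi|$ is the product of the nonnegative psh functions $e^{h(z)}$ (psh since $h\in\textup{PSH}(V)$ and $t\mapsto e^t$ is convex increasing) and $|\xi|$ (psh on $\Bbb C$); but a product of nonnegative psh functions need not be psh, so instead I would note that $\log\big(e^{h(z)}|\xi|\big) = h(z) + \log|\xi|$ is psh on $V\times\Bbb C^*$ as a sum of psh functions, and then invoke the equivalence $\textup{(iii)}\Leftrightarrow\textup{(ii)}$ in the spirit of Proposition \ref{prop: Griff_Kob_negative_equiv} (composition with $\exp$, plus the fact that a bounded psh function on $V\times\Bbb C^*$ that is locally bounded near $V\times\{0\}$ extends across the pluripolar set $V\times\{0\}$; here $e^{h(z)}|\xi|$ is already continuous and $=0$ on $V\times\{0\}$, so the extension is immediate). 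This gives $e^{h(z)}|\xi|\in\textup{PSH}(V\times\Bbb C)$.

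Next, for the Monge--Amp\`ere mass, the key point is that on the region $V\times\Bbb C^*$ the two functions $w:=e^{h(z)}|\xi|$ and $\tilde w := h(z)+\log|\xi|$ are related by $w=e^{\tilde w}$, i.e. $w = F\circ\tilde w$ with $F(t)=e^t$ smooth strictly convex increasing. I would use the Bedford--Taylor change-of-variables/chain rule for $(i\ddbar F\circ\tilde w)^{k+1}$: expanding,
\[
(i\ddbar\, e^{\tilde w})^{k+1} = e^{(k+1)\tilde w}\big(i\ddbar \tilde w + i\partial\tilde w\wedge\bar\partial\tilde w\big)^{k+1}.
\]
Since $\tilde w = h(z)+\log|\xi|$ and $i\ddbar\log|\xi|=0$ on $\Bbb C^*$ while $i\partial\log|\xi|\wedge\bar\partial\log|\xi| = \frac{i\,d\xi\wedge d\bar\xi}{4|\xi|^2}$ (a nonnegative $(1,1)$-form in the $\xi$ variable, linearly independent from the $z$-directions), the wedge power $\big(i\ddbar h + i\partial h\wedge\bar\partial h + i\partial\log|\xi|\wedge\bar\partial\log|\xi|\big)^{k+1}$ picks up exactly one factor of the $\xi$-form and $k$ factors from the $z$-block, producing a term bounded below by (a positive multiple of) $(i\ddbar h)^k \wedge i\partial\log|\xi|\wedge\bar\partial\log|\xi|$. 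More carefully, $(i\ddbar h + i\partial h\wedge\bar\partial h)^k \geq (i\ddbar h)^k$ as positive currents, so the full $(k+1)$-st power dominates $e^{(k+1)\tilde w}\,(i\ddbar h)^k\wedge \frac{i\,d\xi\wedge d\bar\xi}{4|\xi|^2}$.

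Finally I would integrate over $U\times A$, where $A = B(0,1)\setminus\overline{B(0,\tfrac12)}$. On this set $|\xi|$ is bounded away from $0$ and $\infty$, so $e^{(k+1)\tilde w}=w^{k+1} = e^{(k+1)h(z)}|\xi|^{k+1}$ is bounded below by a positive constant (using $h\in L^\infty(V)$, hence bounded below on $\overline U$), and $\frac{1}{|\xi|^2}$ is likewise bounded below. By Fubini (valid since all the forms in play are positive and the relevant currents are well-defined Radon measures away from the pluripolar zero section), the integral factors as a positive constant times $\big(\int_U (i\ddbar h)^k\big)\cdot\big(\int_A \tfrac{i\,d\xi\wedge d\bar\xi}{4|\xi|^2}\big)$, and both factors are strictly positive — the first by hypothesis, the second because $\frac{d\xi\wedge d\bar\xi}{|\xi|^2}$ has positive mass on the annulus. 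This gives the desired strict inequality.

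The main obstacle I anticipate is making the Bedford--Taylor chain rule and Fubini argument rigorous for merely bounded psh $h$ (rather than smooth): one must justify that the product measure computation is legitimate, which is where I would lean on the fact that $e^{\tilde w}$ is continuous on $V\times\Bbb C$ (being the bounded-above-and-locally-bounded extension of a continuous function), that $(i\ddbar h + i\partial h\wedge\bar\partial h)^k$ can be controlled by a regularization/monotone-limit argument (approximate $h$ by smooth psh $h_j\searrow h$, run the computation, pass to the limit using continuity of Monge--Amp\`ere along decreasing sequences of bounded psh functions), and that the zero section $V\times\{0\}$, being pluripolar, carries no mass for the relevant Monge--Amp\`ere measures. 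None of these steps is deep, but together they are the technical heart of the lemma.
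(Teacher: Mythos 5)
Your proposal is correct and takes essentially the same route as the paper: plurisubharmonicity via $h(z)+\log|\xi|$ and exponentiation, the smooth-case computation of $i\ddbar\big(e^{h(z)}|\xi|\big)$ as $e^{h}|\xi|\,i\ddbar h$ plus a semipositive rank-one part, a degree-counting argument isolating the term $e^{(k+1)h}|\xi|^{k-1}(i\ddbar h)^k\wedge i\,d\xi\wedge d\bar\partial\xi/4$, Fubini over $U\times\{B(0,1)\setminus\overline{B(0,\tfrac12)}\}$, and mollification with Bedford--Taylor convergence for non-smooth $h$. The only cosmetic slip is that your displayed expansion drops the cross terms $i\partial h\wedge\bar\partial\log|\xi|+i\partial\log|\xi|\wedge\bar\partial h$, which is harmless because exactly the degree argument you invoke kills those terms when wedged against $(i\ddbar h)^k$.
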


\begin{proof} Clearly $h(z) + \log|\xi|$ is psh on $V \times \Bbb C$, implying that so is $e^{h(z)}|\xi|$.

Let us assume momentarily that $h$ is smooth. Then on $V \times B(0,1) \setminus \overline{B(0,\frac{1}{2})}$ we get that 
$$i \ddbar (e^{h(z)}|\xi|)=e^{h(z)}|\xi| i \ddbar h +ie^h|\xi|\partial h\wedge \bar{\partial}h+ie^h\partial h\frac{\xi}{2|\xi|}d\Bar{\xi}-ie^h\Bar{\partial}h\frac{\Bar{\xi}}{2|\xi|}d\xi +\frac{e^{h(z)}}{4|\xi|}i \partial \xi \wedge \bar \partial \xi,$$ where the last four terms together represent a semipositive $(1,1)-$form. 
In particular, 
\begin{align*}
  i \ddbar (e^{h(z)}|\xi|)^{k+1} &\geq  e^{(k+1)h(z)}(|\xi| i \ddbar h)^k\wedge (i|\xi|\partial h\wedge \bar{\partial}h+i\partial h\frac{\xi}{2|\xi|}d\Bar{\xi}-i\Bar{\partial}h\frac{\Bar{\xi}}{2|\xi|}d\xi +\frac{1}{4|\xi|}i \partial \xi \wedge \bar \partial \xi)\\
  &=\frac{|\xi|^{k-1}}{4}e^{(k+1)h(z)} (i \ddbar h)^{k} \wedge (i \partial \xi \wedge \bar \partial \xi),  
\end{align*} by an argument on degrees.
Using the above estimate and Fubini's theorem we obtain that
$$\int_{U \times \{B(0,1) \setminus \overline{B(0,\frac{1}{2})}\}}(i\ddbar (e^{h(z)}|\xi|)^{k+1} \geq C \int_U(i\ddbar h)^k>0.$$
The case when $h$ is non-smooth follows using mollification of $h$ and standard convergence theorems of Bedford--Taylor theory. 
\end{proof}

\subsection{Degenerate CMAE with continuous boundary data}\label{subsec degenerate CMAE}

In this short subsection we give the simple argument of Theorem \ref{thm: deg_CMAE_cont_main}, building on estimates of Blocki \cite{Bl13} (see \cite{Ch00,Bo12,BD09} for closely related results):

\begin{theorem}\label{thm: deg_CMAE_cont_boundary_data} Given $v \in C(\partial{D} \times X)$ such that $v_z=v(z,\cdot) \in \textup{PSH}(X,\omega), \ z \in \partial D$, the Dirichlet problem \eqref{eq: DR_equation} has a unique solution $u  \in C(\overline{D} \times X) \cap \textup{PSH}(D \times X,\pi^* \omega)$. 
\end{theorem}

\begin{proof} To address existence, we first construct a sequence of approximate smooth subsolutions. Since $v \in C({\partial D} \times X)$ and $v_z \in \textup{PSH}(X,\omega), \ z \in \partial D$, we  can use Remark \ref{rem: split_holom_ex}(a) and Theorem \ref{thm: approx_family} to find $v^k  \in C^\infty({\partial D} \times X)$ such that $\omega + i\ddbar v^k_z>0, \ z \in \partial D$ and $v^k \searrow v$ uniformly. 

Using appropriate cutoffs and strong pseudoconvexity of $D$, we extend $v^k$ to 
$v^k  \in C^\infty({\overline{D}} \times X)$ such that $\pi^* \omega + i\ddbar v^k >0$ on ${\overline{D}} \times X$. Now let $u^k$ be the solution to the following PDE:
\begin{equation}
    \begin{cases}\label{eq: DR_equation_approx}
    (\pi^*\omega +i\partial \bar{\partial}u^k)^{n+m}=0 \text{  on $D\times X$},\\
    u^k|_{\partial D}=v^k\\
    \pi^*\omega +i\partial \bar{\partial}u^k\geq 0.
    \end{cases}
\end{equation} 
By \cite[Theorem 26]{Bl13} and an argument identical to Lemma \ref{lem: Lipschitz_solution}, we actually have that $u^k$ exists and is Lipschitz up to the boundary.

As in the last step of the proof of Theorem \ref{thm: extremal_PDE}, the comparison principle gives that $\{u^k\}_k$ is a Cauchy sequence of continuous functions, because so is the boundary data $\{v^k\}_k$. As a result $u^k \searrow u$ uniformly, with $u$ being continuous and equal to $v$ on the boundary. Basic theorems of Bedford--Taylor theory also imply that $    (\pi^*\omega +i\partial \bar{\partial}u)^{n+m}=0$. Hence $u$ is a continuous solution to \eqref{eq: DR_equation}, and is unique due to \cite[Theorem 21]{Bl13}.
\end{proof}

\section{Griffiths negativity and quantization}\label{sec Griff neg and quantization}

First we show that our definition of the Fubini--Study map on $\mathcal M^*_k$ from \eqref{eq: FS_k_def_general} is compatible with the classical one from \eqref{eq: FS_k_usual}: 
\begin{lemma}\label{lem: KRW-formula} For any $G \in \mathcal H_k$ and $x \in X$ we have that 
\begin{equation}\label{eq: KRW-formula}
FS_k(G)(x) := \frac{1}{k}\log \sup_{s\in H^0(X,L^k),G(s)\leq 1}h^k(s,s)= \frac{2}{k} \log\big[G^* (\hat{s}^*_k(x))\big]=: FS^*_k(G^*)(x),
\end{equation}
where $s_k^*:X \to (L^k)^*$ is any discontinuous section satisfying $(h^*)^k(s_k^*(x),s_k^*(x))=1, \ x \in X$. Moreover, $\hat s^*_k: X \to H^0(X,L^k)^*$ is the pointwise evaluation map of $s_k^*$.
\end{lemma}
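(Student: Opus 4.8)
The plan is to unwind both sides of \eqref{eq: KRW-formula} in a local trivialization and check they coincide pointwise. Fix $x \in X$, and a local holomorphic frame $e$ of $L^k$ near $x$ with $h^k(e,e) = e^{-k\phi}$ for a local potential $\phi$; write any section $s \in H^0(X,L^k)$ locally as $s = \hat s(x) e$ where I abuse notation and let $\hat s(x) \in \CC$ denote the coefficient, so $h^k(s,s)(x) = |\hat s(x)|^2 e^{-k\phi(x)}$. I would first record the elementary duality fact: for $G \in \mathcal H_k$, the dual norm $G^*$ on $H^0(X,L^k)^*$ satisfies, for any $\mu \in H^0(X,L^k)^*$,
\[
G^*(\mu) = \sup_{s \in H^0(X,L^k),\, G(s) \le 1} |\mu(s)|.
\]
Applying this with $\mu = \hat s_k^*(x)$, whose defining property is $\hat s_k^*(x)(s) = s_k^*(s(x)) = \hat s(x)\, s_k^*(e(x))$, and using $(h^*)^k(s_k^*(x),s_k^*(x)) = 1$, which in the frame $e$ reads $|s_k^*(e(x))|^2 e^{k\phi(x)} = 1$, i.e. $|s_k^*(e(x))| = e^{-k\phi(x)/2}$, I get
\[
G^*(\hat s_k^*(x)) = \sup_{G(s) \le 1} |\hat s(x)|\, e^{-k\phi(x)/2} = \sup_{G(s) \le 1} \big(h^k(s,s)(x)\big)^{1/2}.
\]

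Squaring and taking $\tfrac{2}{k}\log$ of both sides then yields exactly
\[
\frac{2}{k}\log\big[G^*(\hat s_k^*(x))\big] = \frac{1}{k}\log \sup_{G(s)\le 1} h^k(s,s)(x) = FS_k(G)(x),
\]
which is the desired identity. I would also remark here that the right-hand side manifestly does not depend on the choice of the unit section $s_k^*$: any two such choices differ, at $x$, by a unimodular scalar (since both have unit $(h^*)^k$-norm in the one-dimensional fiber $(L^k_x)^*$), so $|s_k^*(e(x))|$ and hence $G^*(\hat s_k^*(x))$ is unchanged; and the measurability/discontinuity of $s_k^*$ is irrelevant since we only evaluate at a single point $x$. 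This handles the case $G \in \mathcal H_k$; for the general $G^* \in \mathcal M_k^*$ the formula \eqref{eq: FS_k_def_general} is taken as the definition, so nothing further is needed for this lemma (the $\PSH$ claim is deferred to Lemma \ref{lem: FS_k_PSH}).

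The main obstacle — really the only point requiring care — is bookkeeping the duality and the various square roots/factors of $2$ correctly: one must be careful that $\mathcal H_k$ consists of Hermitian \emph{metrics} (so $G(s)$ is a norm, $G(s)^2$ the associated quadratic form), that $H_k^*(v) = H_k(v)^*$ is the dual \emph{metric}, and that $\hat s_k^*(x) \in H^0(X,L^k)^*$ is genuinely linear (not conjugate-linear) in $\sigma$, so that the sup defining $G^*$ is over $|\mu(s)|$ with the correct homogeneity. Once the trivialization is set up and the identity $|s_k^*(e(x))| = h^k(e,e)(x)^{1/2}$ is noted, the computation is a one-line chain of equalities. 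I expect no analytic difficulty; the content is purely the observation that dualizing a Hermitian inner product and then evaluating at the canonical point $\hat s_k^*(x)$ reproduces the Fubini--Study supremum $\sup_{G(s)\le 1} h^k(s,s)(x)$.
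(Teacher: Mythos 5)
Your proof is correct and follows essentially the same route as the paper: a local trivialization at $x$, the duality identity $G^*(\mu)=\sup_{G(s)\le 1}|\mu(s)|$, and the unit-norm condition on $s_k^*$ giving $|\hat s_k^*(x)(s)|=h^k(s,s)(x)^{1/2}$, with the unimodular-ambiguity remark handling independence of the choice of $s_k^*$. The only cosmetic difference is that the paper verifies the identity for one explicit choice $s_k^*=h^k(\cdot,s)/h^k(s,s)^{1/2}$ after reducing to that case, whereas you work with an arbitrary unit section directly; the content is identical.
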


\begin{proof} We start by noticing that two different choices of $s_k^*$ differ only by a unimodular complex factor, hence $FS_k^*$ is independent of such a choice. 

Consequently, it is enough to verify \eqref{eq: KRW-formula} for a specific choice of $s_k^*$ at a fixed $x \in X$. Let us pick a non-vanishing section $s \in H^0(U,L^k)$ in a neighborhood of $U$ of $x$. In this neighborhood  our desired section $s^*_k \in \Gamma(U,(L^k)^*)$ will be defined by 
\begin{equation}\label{eq: s_k_def}
s^*_k:=\frac{h^k(\cdot,s)}{h^k(s,s)^{\frac{1}{2}}}.
\end{equation}
Comparing the definitions \eqref{eq: FS_k_usual} and \eqref{eq: FS_k_def_general} at $x$, we conclude that
\begin{flalign*}
FS_k(G)(x) &= \frac{1}{k}\log \sup_{\sigma\in H^0(X,L^k),G(\sigma)\leq 1}h^k(\sigma,\sigma)(x)\\
&= \frac{1}{k}\log \sup_{\sigma\in H^0(X,L^k),G(\sigma)\leq 1} \bigg|\frac{\sigma(x)}{s(x)}\bigg|^2 h^k(s,s)\\
&= \frac{2}{k}\log \sup_{\sigma\in H^0(X,L^k),G(\sigma)\leq 1}|\hat{s}^*_k(x)(\sigma)|=\frac{2}{k} \log\big[G^* (\hat{s}^*_h(x))\big]=FS^*_k(G^*)(x),
\end{flalign*}
what we desired to prove.
\end{proof}

In the next lemma we show that $FS_k^*(\Lambda)$ is indeed $\omega$-psh for any $\Lambda \in \mathcal M^*_k$.

\begin{lemma}\label{lem: FS_k_PSH} For $\Lambda \in \mathcal M^*_k$ we have that $FS_k^*(\Lambda) \in \textup{PSH}(X,\omega)$.
\end{lemma}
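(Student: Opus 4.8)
The plan is to check the $\omega$-psh condition locally around an arbitrary point $x_0 \in X$, by exhibiting an explicit local formula for $FS_k^*(\Lambda)$ as a local potential of $\omega$ plus a bona fide psh function. First I would fix a non-vanishing holomorphic section $s \in H^0(U,L^k)$ on a small open set $U \ni x_0$ and use the local expression \eqref{eq: s_k_def} for $s_k^*$, i.e.\ $s_k^* = h^k(\cdot,s)/h^k(s,s)^{1/2}$. Writing $\sigma(x) = (\sigma(x)/s(x))\,s(x)$ for $\sigma \in H^0(X,L^k)$ and evaluating, one obtains the factorization $\hat s_k^*(x) = h^k(s,s)^{1/2}(x)\,\hat e(x)$, where $\hat e(x) \in H^0(X,L^k)^*$ is defined by $\hat e(x)(\sigma) := \sigma(x)/s(x)$. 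The map $x \mapsto \hat e(x)$ is holomorphic from $U$ into $H^0(X,L^k)^*$ (its pairing with any fixed $\sigma$ is the holomorphic function $\sigma/s$), and it is nowhere zero since $\hat e(x)(s) = 1$. Using the homogeneity of the Finsler metric $\Lambda$ and the fact that $h^k(s,s)^{1/2}(x) > 0$, we get $\Lambda(\hat s_k^*(x)) = h^k(s,s)^{1/2}(x)\,\Lambda(\hat e(x))$, hence on $U$
\begin{equation*}
FS_k^*(\Lambda)(x) = \tfrac1k\log h^k(s,s)(x) + \tfrac2k\log\Lambda(\hat e(x)).
\end{equation*}

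Next I would recall that $-\tfrac1k\log h^k(s,s)$ is a local potential for $\omega$ on $U$: since $\omega = i\Theta(h)$, we have $i\partial\bar\partial\bigl(-\tfrac1k\log h^k(s,s)\bigr) = \omega$ there. It then remains to show that $x \mapsto \tfrac2k\log\Lambda(\hat e(x))$ is psh on $U$. For this, one uses that $\log\Lambda$ is psh on $H^0(X,L^k)^*$ because $\Lambda \in \mathcal{M}_k^*$ is psh and homogeneous — this is precisely the argument of Proposition~\ref{prop: Griff_Kob_negative_equiv}, (ii)$\Rightarrow$(iii), carried out on the vector space: for any holomorphic $g$ on $U$, homogeneity gives $\log\Lambda(\hat e(x)) - \operatorname{Re} g(x) = \log\Lambda\bigl(e^{-g(x)}\hat e(x)\bigr)$, the map $x \mapsto \Lambda(e^{-g(x)}\hat e(x))$ is psh hence satisfies the maximum principle, and so does its logarithm; the criterion \cite[Theorem J.7]{Gu90} then yields psh-ness of $x\mapsto\log\Lambda(\hat e(x))$. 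Since $\hat e$ is nowhere zero and $\Lambda$ is finite-valued and strictly positive away from the origin, this function is finite-valued, in particular locally bounded above and $\not\equiv -\infty$.

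Combining the two points, $\varphi + FS_k^*(\Lambda) = \tfrac2k\log\Lambda(\hat e(\cdot))$ with $\varphi := -\tfrac1k\log h^k(s,s)$ a local $\omega$-potential, so $\omega + i\partial\bar\partial FS_k^*(\Lambda) \ge 0$ on $U$; as $x_0$ was arbitrary this holds on all of $X$. The local expressions patch consistently because $FS_k^*(\Lambda)$ does not depend on the choice of $s_k^*$ (as in Lemma~\ref{lem: KRW-formula}), so $FS_k^*(\Lambda)$ is globally usc and not identically $-\infty$; hence $FS_k^*(\Lambda) \in \textup{PSH}(X,\omega)$. The only genuinely non-formal step — and the one I would be most careful about — is the psh-ness of $\log\Lambda$ on the vector space, i.e.\ correctly transcribing the maximum-principle argument of Proposition~\ref{prop: Griff_Kob_negative_equiv}; the rest is bookkeeping with homogeneity and the curvature identity $\omega = i\Theta(h)$.
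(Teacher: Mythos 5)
Your proposal is correct and follows essentially the same route as the paper: choose the local section $s_k^*$ as in \eqref{eq: s_k_def}, use homogeneity to factor $\hat s_k^*(x)=h^k(s,s)^{1/2}\hat e(x)$ with $\hat e$ holomorphic, absorb the local potential $-\tfrac1k\log h^k(s,s)$ of $\omega$, and invoke the (ii)$\Rightarrow$(iii) maximum-principle argument of Proposition~\ref{prop: Griff_Kob_negative_equiv} to get plurisubharmonicity of $\log\Lambda\circ\hat e$. The paper simply cites that proposition where you transcribe its argument explicitly; the content is identical.
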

\begin{proof} We need to show that $\omega+i\partial\bar{\partial}FS_k^*(\Lambda) \geq 0$. Pick $x \in X$, for the same choice of $s_k^* \in \Gamma(U,(L^k)^*)$ as in \eqref{eq: s_k_def}, it is enough to show this inequality on $U$. 

Notice that with this choice,
$$\hat{s}^*_k(x)(\sigma) = \frac{\sigma(x)}{s(x)} \cdot {h^k(s,s)^{\frac{1}{2}}}, \ \ x \in U.$$
Using \eqref{eq: FS_k_def_general} on $U$ 
$$\omega|_U+i\partial\bar{\partial}FS_k^*(\Lambda)|_U= -\frac{i}{k}\ddbar \log h^k(s,s) + i\partial\bar{\partial}FS_k^*(\Lambda)|_U=\frac{2i}{k}\partial\bar{\partial}\log \big[ \Lambda(\hat{s}_k^*(x)h^k(s,s)^{-\frac{1}{2}}) \big].$$
This last quantity is positive on $U$ by Proposition \ref{prop: Griff_Kob_negative_equiv} since $\hat{s}_k^*(x)h^k(s,s)^{-\frac{1}{2}}$ is holomorphic and $\Lambda \in \mathcal M^*_k$. 
\end{proof}

Recall that $D \ni z \to U_z \in \mathcal M_k^*$ is \emph{Griffiths negative} if $U$ is psh on $D  \times H^0(X,L^k)^*$. Given $v \in C(\partial D \times X)$ such that $v_z \in \textup{PSH}(X,\omega), \ z \in \partial D$, we consider the following families 
$$F_v^{\mathcal N,k}:= \{D \ni z \to U_z \in \mathcal N_k^* \textup{ is Griffiths negative and } \limsup_{z \to \partial D} U_z \leq H_k^*(v)\},$$
$$F_v^{\mathcal M,k}:= \{D \ni z \to U_z  \in \mathcal M_k^*\textup{ is Griffiths negative and } \limsup_{z \to \partial D} U_z \leq H_k^*(v)\},$$
where $H^*_k(\cdot)$ is the dual Hilbert map from \eqref{eq: Hilb_k_usual_dual}. Naturally we consider the following Perron type envelopes:
\begin{equation}\label{eq: Griff_extr_metric_norm_Perron_def}
U^{\mathcal M,k}:= \sup_{V \in F^{\mathcal M,k}_v} V, \ \ \ \ \ U^{\mathcal N,k}:= \sup_{V \in F^{\mathcal N,k}_v} V. \ \  
\end{equation}
From Theorems \ref{thm: extremal_PDE} and \ref{thm: main_PDE_noncompact} it follows that $U^{\mathcal M,k}$ is continuous on the total space and assumes $H^*_k(v)$ on the boundary.

Since the members in $F_v^{\mathcal N,k}$ are norms in the fiber direction, so is the envelope $U^{\mathcal N,k}$. Therefore, for $z\in D$ and $s,t\in H^0(X,L^k)^*$, we have
$$U^{\mathcal N,k}_z(s+t)\leq \textup{usc}(U^{\mathcal N,k})_z(s)+\textup{usc}(U^{\mathcal N,k})_z(t),$$
and by taking $\limsup$ on this inequality we see that $\textup{usc}(U^{\mathcal N,k})$ is a norm in the fiber direction. Since   $U^{\mathcal N,k} \leq \textup{usc}(U^{\mathcal N,k}) \leq U^{\mathcal M,k}$ and $\textup{usc}(U^{\mathcal N,k})$ is a norm in the fiber direction, it follows automatically that $U^{\mathcal N,k} \in F_v^{\mathcal N,k}$. More precisely, we now prove that $U^{\mathcal N,k}$ assumes the correct boundary value as well, following closely ideas of Slodkowski and Coifman--Semmes \cite{Sl1,CS93}:

\begin{prop}\label{prop: bdry data}
Both envelopes $U^{\mathcal M,k}$ and $U^{\mathcal N,k}$ assume the boundary data $H^*_k(v)$.
\end{prop}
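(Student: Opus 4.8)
The plan is to prove both boundary claims by the standard barrier method, exploiting that we already know $U^{\mathcal M,k}$ attains $H^*_k(v)$ on $\partial D$ (from Theorems \ref{thm: extremal_PDE} and \ref{thm: main_PDE_noncompact}), and then transferring the statement to $U^{\mathcal N,k}$ by trapping it between two quantities that already have the right boundary behavior. Since $U^{\mathcal N,k} \leq U^{\mathcal M,k}$ and $U^{\mathcal M,k}$ attains $H^*_k(v)$ from below on the boundary (it is continuous up to $\partial D$ and equals $H^*_k(v)$ there), it suffices to produce, for each boundary point $(z_0,\xi) \in E|_{\partial D}$ with $\xi \in H^0(X,L^k)^*$, a lower bound showing $\liminf U^{\mathcal N,k}(z',\xi') \geq H^*_k(v)(z_0)(\xi)$ as $(z',\xi') \to (z_0,\xi)$. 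For that, the idea is to exhibit a single competitor $W \in F_v^{\mathcal N,k}$ whose value at $(z_0,\xi)$ is as close to $H^*_k(v)(z_0)(\xi)$ as we please.

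The construction of this competitor is the heart of the matter, and here is where the Wiener--Masani/Coifman--Semmes philosophy enters. The boundary data $z \mapsto H^*_k(v_z)$ is a continuous family of \emph{Hermitian} (hence Finsler) norms on $H^0(X,L^k)^*$ over $\partial D$; by continuity, near a fixed $z_0 \in \partial D$ it is close to the constant Hermitian norm $H^*_k(v_{z_0})$. The first step is thus a \emph{local} statement: over a small boundary coordinate ball, decompose the Hermitian boundary data à la Wiener--Masani so that it can be written (up to small error) as $|\langle F(z)\cdot, \cdot\rangle|$-type expression with $F$ holomorphic, producing a genuine Griffiths negative \emph{norm} that matches the boundary data up to $\varepsilon$. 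Equivalently and more robustly: first take the Hermitian--Yang--Mills solution $h^g_{YM}$ of Lemma \ref{lem: YM_supersol} applied to the boundary data $H^*_k(v)$ pushed to the projectivization, which is a \emph{continuous Hermitian} metric on $E|_{\overline{D}}$ attaining the boundary values; this is itself Griffiths negative (by the HYM equation and the fact that $\alpha + i\ddbar\phi \geq 0$), hence $h^g_{YM} \in F_v^{\mathcal N,k}$. Since $h^g_{YM}$ is continuous up to $\partial D$ and equals $H^*_k(v)$ there, we immediately get $U^{\mathcal N,k} \geq h^g_{YM}$, and therefore $\liminf_{(z',\xi')\to(z_0,\xi)} U^{\mathcal N,k}(z',\xi') \geq h^g_{YM}(z_0,\xi) = H^*_k(v)(z_0)(\xi)$. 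Combined with the upper bound $\limsup U^{\mathcal N,k} \leq \limsup U^{\mathcal M,k} = H^*_k(v)$, this forces $\lim U^{\mathcal N,k} = H^*_k(v)$ on $\partial D$, and the same sandwiching gives the claim for $U^{\mathcal M,k}$ as well (the latter already being known).

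The step I expect to be the main obstacle is verifying that $h^g_{YM}$ — or whichever Hermitian barrier one uses — genuinely lies in $F_v^{\mathcal N,k}$, i.e., that it is Griffiths negative \emph{as a Finsler metric} on the trivial bundle $D \times H^0(X,L^k)^*$ in the sense required. The HYM metric is built on $L(E) \to \mathbb P(E)|_D$ and its negativity is the content of Lemma \ref{lem: YM_supersol} only at the level of the comparison with $\textup{PSH}_{\phi_g}$; one must check that the solution $\phi$ of \eqref{eq: YM_phi_eq} satisfies $\alpha + i\ddbar \phi \geq 0$ (not merely traceless), which is where the constructions in Theorem \ref{thm: extremal_PDE} — approximating by $\psi^k$ that are genuinely $\alpha$-psh, or invoking that the traceless harmonic-type solution of a linear equation with $\alpha$-psh boundary data stays $\alpha$-psh by the maximum principle applied fiberwise — need to be carefully assembled. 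The remaining subtlety is purely notational: translating between the total-space psh formulation of Griffiths negativity (Proposition \ref{prop: Griff_Kob_negative_equiv}) and the fiberwise-norm formulation, and handling the zero section exactly as in Lemma \ref{lem: bijection_metric_psh}. Once those are in place, the boundary-value statement is immediate from the sandwich.
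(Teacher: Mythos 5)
Your overall skeleton (upper bound from $U^{\mathcal N,k}\leq U^{\mathcal M,k}$ together with the already established boundary behavior of $U^{\mathcal M,k}$, lower bound by producing, for each fixed boundary point, a Griffiths negative \emph{norm} competitor whose value approaches the boundary data) is exactly the paper's strategy. However, the competitor you propose does not work, and this is a genuine gap rather than a technicality. The Hermitian--Yang--Mills metric $h^g_{YM}=h_Le^{\phi}$ of \eqref{eq: YM_phi_eq} is \emph{not} Griffiths negative, and the verification you flag as ``the main obstacle'' --- showing $\alpha+i\ddbar\phi\geq 0$ --- is impossible: since $\alpha$ is K\"ahler, $\textup{Tr}_\alpha(\alpha+i\ddbar\phi)=0$ together with $\alpha+i\ddbar\phi\geq 0$ would force $\alpha+i\ddbar\phi\equiv 0$, i.e.\ $\alpha$ exact on $\Bbb P(E)|_D$; restricting to a compact fiber $\Bbb P(E_z)\cong \CP^{N_k-1}$ (with $N_k=\dim H^0(X,L^k)\geq 2$) and integrating $\alpha^{N_k-1}$ gives a contradiction. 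Indeed, in the paper $h^g_{YM}$ plays the opposite role: by Lemma \ref{lem: YM_supersol} it dominates every $\alpha$-psh candidate, so it is an upper barrier, not a member of $F_v^{\mathcal N,k}$. Your fallback suggestion, a Wiener--Masani type factorization of the boundary data, is also unavailable here: as the paper notes after Theorem \ref{thm: C_0_quantization_main}, that decomposition is special to $\dim D=1$, whereas the proposition is needed for arbitrary $m$.

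The correct lower barrier (the paper's) is much more elementary: fix $\xi_0\in\partial D$, set $w_0(\xi):=\inf_{s\neq 0} H^*_k(v_\xi)(s,s)^{1/2}/H^*_k(v_{\xi_0})(s,s)^{1/2}>0$ (continuous on $\partial D$), solve a Dirichlet problem to get $w\in \textup{PSH}(D)\cap C(\overline D)$ with $w=\log w_0$ on $\partial D$, and take the competitor $z\mapsto e^{w(z)}H^*_k(v_{\xi_0})(\cdot,\cdot)^{1/2}$. This is a genuine norm in the fibers, it is Griffiths negative by Proposition \ref{prop: Griff_Kob_negative_equiv} (its logarithm is $w(z)$ plus a $z$-independent psh function), and by the definition of $w_0$ it lies below $H^*_k(v)$ on $\partial D$, so it belongs to $F_v^{\mathcal N,k}$; since $w_0(\xi_0)=1$, letting $z\to\xi_0$ yields the desired liminf bound. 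Replacing your HYM/Wiener--Masani step by this frozen-and-rescaled Hermitian barrier repairs the argument.
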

\begin{proof} As discussed previously, Theorem \ref{thm: main_PDE_noncompact} implies the result for $U^{\mathcal M,k}$. We argue the statement for $U^{\mathcal N,k}$. By the comments before the proposition, we have $\limsup_{z\to \partial D} U^{\mathcal N,k}_z\leq H^*_k(v)$. So, for a fixed $\xi_0\in \partial D$, it is enough to show that 
\begin{equation}\label{eq: liminf_desired}
\liminf_{z\to \xi_0}  U^{\mathcal N,k}_z \geq H^*_k(v_{\xi_0}).  
\end{equation}
For $\xi \in \partial D$, we introduce 
$$w_0(\xi):=\inf_{H^0(X,L^k)^* \ni s\neq 0}\frac{H^*_k(v_{\xi})(s,s)^{\frac{1}{2}}}{H^*_k(v_{\xi_0})(s,s)^{\frac{1}{2}}}.$$
We claim that $w_0 \in C(\partial D)$; this is a special case of \cite[Lemma 11.5]{CS93} and we will be brief on the argument. Let us denote $$H^*_k(v_{\xi})(s,s)^{\frac{1}{2}}$$ by $p_\xi (s)$ where $\xi \in \partial D$ and $s\in H^0(X,L^k)^*$. Notice that for $\xi\in \partial D$, $p_\xi (\cdot)$ is a norm on $H^0(X,L^k)^*$. Meanwhile, for fixed $s$, $\partial D \ni \xi \mapsto p_\xi(s) $ is continuous because $v\in C(\partial D\times X)$ and the dual metric of a continuous Hermitian metric is also continuous. Such $p_\xi(s)$ is called a continuous norm-valued function. Let $s_1,...,s_l$ be a basis for $H^0(X,L^k)^*$. Identify $H^0(X,L^k)^*$ with $\mathbb{C}^l$ and let $\|\cdot\|$ be the Euclidean norm, namely, $$\| \sum_{i=1}^l a_i s_i   \| = \big(  \sum_{i=1}^l |a_i|^2       \big)^{\frac{1}{2}} \textup{ where $a_i\in \mathbb{C}$ for $i=1\sim l$.  }   $$ 
Using the fact $p_\xi(s)$ is a continuous norm-valued function, one can deduce the following inequalities. For a compact set $K\subset \partial D$, there exists a constant $C(K)$ such that $$p_\xi (s)\leq C(K) \|s\| \textup{ for $\xi\in K $ and $s\in H^0(X,L^k)^*$.    }   $$
Moreover, 
\begin{equation}\label{ineq 1}
  |  p_\xi(s)-p_\xi(s') |\leq p_\xi (s-s')\leq C(K)\| s-s' \|    \textup{ for $\xi\in K $ and $s,s'\in H^0(X,L^k)^*$.    }
\end{equation}
Fixing a metric $d$ on $\partial D$, we deduce from inequality (\ref{ineq 1}): given a compact set $K\subset \partial D$ and $\varepsilon>0$, there exists $\delta>0$ such that for $\xi_1,\xi_2\in K$ and $d(\xi_1,\xi_2)<\delta$, we have 
\begin{equation}\label{ineq 2}
    |p_{\xi_1}(s)-p_{\xi_2}(s)   |<\varepsilon\|s\|.
\end{equation}
From (\ref{ineq 1}) and (\ref{ineq 2}), $p_\xi(s)$ is continuous in $(\xi,s)$ jointly. By this joint continuity and the fact $p_\xi(\cdot)$ is a norm, we get: for a compact set $K\subset \partial D$, there exists $a>0$ such that
\begin{equation}\label{ineq 3}
p_\xi(s) \geq a\|s\|   \text{ for $\xi\in K$.  }
\end{equation}
We also notice that for $\xi\in \partial D$, the map $s\mapsto p_\xi(s)/p_{\xi_0}(s)$ is continuous on $\| s \|=1$, so
\begin{equation}\label{ineq 4}
  \inf_{s\neq 0 }\frac{p_\xi(s)}{p_{\xi_0}(s)}=\min_{\|s\|=1}\frac{p_\xi(s)}{p_{\xi_0}(s)}. 
\end{equation}
Using (\ref{ineq 2}), (\ref{ineq 3}), and (\ref{ineq 4}), the map 
$$ \partial D \ni \xi \mapsto  \inf_{s\neq 0 }\frac{p_\xi(s)}{p_{\xi_0}(s)}=w_0(\xi)  $$
is continuous as claimed. During the proof, one sees that $w_0(\xi)>0$ because the $\inf$ in defining $w_0$ is attained.

Let $w\in \textup{PSH}(D)\cap C(\overline{D})$ be such that $w= \log w_0$ on $\partial D$ (such a $w$ can be found as a solution to a complex Monge--Amp\`ere equation  
\begin{equation*}
    \begin{cases}
(i\partial \bar{\partial } w)^m=0\\
w|_{\partial D}=\log w_0
    \end{cases}
\end{equation*}
since $D$ is  strongly pseudoconvex). As $\xi_0\in \partial D$ is fixed, $H_k^*(v_{\xi_0})^{1/2}$ is a fixed norm on $H^0(X,L^k)^*$. We consider the Finsler norm
$$D \ni z \mapsto e^{w(z)}  H_k^*(v_{\xi_0})^{\frac{1}{2}}\in \mathcal{N}_k^*;      $$
this Finsler norm is Griffiths negative due to Proposition \ref{prop: Griff_Kob_negative_equiv} and the fact the map 
$$ D\times H^0(X,L^k)^* \ni (z,s) \mapsto w(z)+\log H_k^*(v_{\xi_0})(s,s)^{\frac{1}{2}}    $$ 
is psh. Furthermore, with the inequality
 $$ w_0(\xi)H^*_k(v_{\xi_0})(s,s)^{\frac{1}{2}}\leq H^*_k(v_{\xi})(s,s)^{\frac{1}{2}}, \ \xi \in \partial D,$$ we see that the Finsler norm $e^{w(z)}  H_k^*(v_{\xi_0})^{\frac{1}{2}}$ is in $F_v^{\mathcal N,k}$. As a result, $e^{w(z)}H^*_k(v_{\xi_0})(s,s)^{\frac{1}{2}}\leq U^{\mathcal N,k}_z(s)$ for $ z \in D$ since $U^{\mathcal N,k}$ is a Perron envelope. Letting $z\to \xi_0$ in this last estimate, we obtain \eqref{eq: liminf_desired}, as desired.
\end{proof}

\begin{remark}
Although both envelopes $U^{\mathcal M,k}$ and $U^{\mathcal N,k}$ have the same boundary values according to the above result, an example of Slodkowski \cite[Corollary 6.8]{Sl2} shows that the two envelopes are in general not the same!
\end{remark}
There exists a connection between Griffiths negativity and plurisubharmonicity in the quantum formalism, with the argument almost identical to the one in Lemma \ref{lem: FS_k_PSH}:
\begin{prop}\label{prop: semi_classic_max_princ} If $D \ni z \to V_z \in \mathcal M^*_k$ is Griffiths negative then $FS_k^*(V_z) \in \textup{PSH}(D \times X, \pi^* \omega)$.
\end{prop}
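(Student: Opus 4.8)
The plan is to reduce the statement to the fiberwise computation already carried out in Lemma~\ref{lem: FS_k_PSH}, now performed with holomorphic dependence on the base variable $z \in D$. First I would recall that, by definition, $z \to V_z$ being Griffiths negative means that the function $(z,\Lambda) \to V_z(\Lambda)$ is psh on the total space $D \times H^0(X,L^k)^*$. Using Proposition~\ref{prop: Griff_Kob_negative_equiv}, this is equivalent to $(z,\Lambda) \to \log V_z(\Lambda)$ being psh on $D \times H^0(X,L^k)^*$. This logarithmic form is exactly what the Fubini--Study formula \eqref{eq: FS_k_def_general} feeds on, since $FS_k^*(V_z)(x) = \tfrac{2}{k}\log V_z(\hat s_k^*(x))$.

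Next I would fix a point $(z_0,x_0) \in D \times X$ and work on a product neighborhood $W \times U$, where $U \subset X$ is a coordinate patch on which $L^k$ has a non-vanishing holomorphic section $s$, and I choose $s_k^*$ as in \eqref{eq: s_k_def}. Just as in Lemma~\ref{lem: FS_k_PSH}, on $W \times U$ one has
\begin{equation*}
\pi^*\omega + i\partial\bar\partial FS_k^*(V)\big|_{W \times U} = \frac{2i}{k}\,\partial\bar\partial \log\big[ V_z\big(\hat s_k^*(x)\,h^k(s,s)^{-1/2}\big)\big],
\end{equation*}
because the term $-\tfrac{i}{k}\ddbar \log h^k(s,s)$ coming from $\pi^*\omega$ is independent of $z$ and cancels against the $h^k(s,s)^{-1/2}$ correction exactly as before. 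Now the key point: the map $(z,x) \mapsto \hat s_k^*(x)\,h^k(s,s)^{-1/2} \in H^0(X,L^k)^*$ is holomorphic on $W \times U$ (it equals $\sigma \mapsto \sigma(x)/s(x)$, which is holomorphic in $x$ and constant in $z$), so $(z,x) \mapsto (z, \hat s_k^*(x)h^k(s,s)^{-1/2})$ is a holomorphic map from $W \times U$ into $D \times H^0(X,L^k)^*$. Composing the psh function $(z,\Lambda) \to \log V_z(\Lambda)$ with this holomorphic map yields a psh function on $W \times U$, hence the right-hand side above is $\geq 0$ there.

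Since $(z_0,x_0)$ was arbitrary and the inequality $\pi^*\omega + i\partial\bar\partial FS_k^*(V) \geq 0$ is local, we conclude $FS_k^*(V_\cdot) \in \textup{PSH}(D \times X, \pi^*\omega)$; here I should also note that $FS_k^*(V_z)$ is finite-valued and upper semicontinuous jointly in $(z,x)$, which follows from $V$ being a continuous (or at least locally bounded psh, hence usc) Finsler metric on the total space and the evaluation map $\hat s_k^*$ being holomorphic, so no degeneracy of $V_z$ occurs at nonzero vectors. The main (and essentially only) subtlety is bookkeeping the independence-of-$z$ of the background potential $\log h^k(s,s)$ so that the cancellation producing a pure $\partial\bar\partial\log[\cdots]$ term still goes through with the base variable present; once that is checked, the argument is a verbatim upgrade of Lemma~\ref{lem: FS_k_PSH} with an extra holomorphic variable, invoking Proposition~\ref{prop: Griff_Kob_negative_equiv} to pass between $V_z$ and $\log V_z$.
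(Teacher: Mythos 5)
Your proposal is correct and follows essentially the same route as the paper's proof: the same local choice of $s_k^*$ from \eqref{eq: s_k_def}, the same cancellation producing $\frac{2i}{k}\partial\bar\partial \log\big[V_z(\hat s_k^*(x)h^k(s,s)^{-1/2})\big]$, and positivity via holomorphicity of $\hat s_k^*(x)h^k(s,s)^{-1/2}$, Griffiths negativity of $z \to V_z$, and Proposition~\ref{prop: Griff_Kob_negative_equiv}. Your added remarks on upper semicontinuity and the $z$-independence of the background potential are just explicit bookkeeping of steps the paper leaves implicit.
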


\begin{proof} We need to show that $\pi^* \omega+i\partial\bar{\partial}FS_k^*(V_z) \geq 0$. Pick $x \in X$, for the same choice of $s_k^* \in \Gamma(U,(L^k)^*)$ as in \eqref{eq: s_k_def}, it is enough to show this inequality on $D \times U$. 
Again, with this choice we have that
$$\hat{s}^*_k(x)(\sigma) = \frac{\sigma(x)}{s(x)} \cdot {h^k(s,s)^{\frac{1}{2}}}, \ \ x \in U.$$
Using \eqref{eq: FS_k_def_general} for $x \in U$  we have that
\begin{flalign*}
\pi^* \omega|_{D \times U}+i\partial\bar{\partial}FS_k^*(V_z)|_{D \times U} &= -\frac{i}{k}\ddbar \log h^k(s,s) + i\partial\bar{\partial}FS_k^*(V_z)|_{D \times U}\\
&=\frac{2i}{k}\partial\bar{\partial}\log \big[ V_z(\hat{s}_k^*(x)h^k(s,s)^{-\frac{1}{2}}) \big].
\end{flalign*}
This last quantity is positive on $D \times U$, as a consequence of holomorphicity of $\hat{s}_k^*(x)h^k(s,s)^{-\frac{1}{2}}$, that $z \to V_z$ is Griffiths negative, and Proposition \ref{prop: Griff_Kob_negative_equiv}. 
\end{proof}

We will need to recall the maximum principle due to Berndtsson, and a twisted version of it, that we will use. For a positive line bundle $(E,g)\to X$, we consider $\mathcal H_{E\otimes K_X}$, the space of positive Hermitian forms on $H^0(X,E\otimes K_X)$. Let $\eta =\Theta(g)$, we define a variant of Hilbert map $\text{Hilb}_{E\otimes K_X}: \mathcal H_{\eta} \to \mathcal H_{E\otimes K_X}$ by $$\text{Hilb}_{E\otimes K_X}(v)(s,s)=\int_X g(s,s)e^{-v}.$$

\begin{prop}\label{prop: bo}
If $v\in \textup{PSH}(D\times X, \pi^*\eta) \cap L^\infty(D \times X)$, then $D\ni z\mapsto \textup{Hilb}_{E\otimes K_X}(v_z)^*$ is Griffiths negative.
\end{prop}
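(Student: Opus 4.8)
The plan is to read Proposition~\ref{prop: bo} as Berndtsson's positivity theorem for direct images \cite{Brn09}, combined with the elementary fact that a Griffiths positive Hermitian bundle has Griffiths negative dual, and with Proposition~\ref{prop: Griff_Kob_negative_equiv} to translate between curvature positivity and plurisubharmonicity on the total space.

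First I would set up the fibration picture. Let $p\colon D\times X\to D$ be the first projection, and let $\mathcal L:=\pi^*E\to D\times X$ be the line bundle equipped with the (a priori non-smooth) Hermitian metric of weight $g\,e^{-v}$; by hypothesis its curvature current is $\pi^*\eta+i\ddbar v\ge 0$. The fibration $p$ is a product, so $K_{D\times X/D}=\pi^*K_X$, and $p_*(\mathcal L\otimes K_{D\times X/D})$ is the trivial bundle $D\times H^0(X,E\otimes K_X)$ whose fibrewise $L^2$-metric sends $s$ to $\int_X g(s,s)e^{-v_z}=\textup{Hilb}_{E\otimes K_X}(v_z)(s,s)$. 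Thus $z\mapsto \textup{Hilb}_{E\otimes K_X}(v_z)$ is exactly the $L^2$ direct-image metric attached to $(\mathcal L, g\,e^{-v})$, and the claim is that its dual is Griffiths negative, i.e.\ that $(z,\xi)\mapsto \textup{Hilb}_{E\otimes K_X}(v_z)^{-1}(\xi,\xi)^{1/2}$ is plurisubharmonic on $D\times H^0(X,E\otimes K_X)^*$.

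Next, if $v$ is smooth with $\pi^*\eta+i\ddbar v>0$, Berndtsson's theorem \cite{Brn09} says this direct-image metric is Nakano (hence Griffiths) semipositive. Dualizing (the dual Chern connection reverses Griffiths positivity), $z\mapsto \textup{Hilb}_{E\otimes K_X}(v_z)^*$ is a Griffiths negative Hermitian metric, and by Proposition~\ref{prop: Griff_Kob_negative_equiv} this means precisely that its norm is plurisubharmonic on the total space, which is the assertion. For general bounded $v$ one passes to the limit: on any $D'\Subset D$, regularize $v|_{\overline{D'}\times X}$ exactly as in the proof of Theorem~\ref{thm: extremal_PDE}, namely by simultaneous Demailly approximation \cite{De94} (cf.\ \cite[Section~8.4.2]{GZ17}) in the fibres $X$ together with mollification in the $D$-directions, to obtain smooth $v^j\searrow v$ with $\pi^*\eta+i\ddbar v^j>0$. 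Since $v$ is bounded, $\textup{Hilb}_{E\otimes K_X}(v^j_z)\nearrow \textup{Hilb}_{E\otimes K_X}(v_z)$ as positive Hermitian forms, so the dual norms decrease; being a decreasing limit of plurisubharmonic functions (and finite, since the limit is a genuine Hermitian norm) the dual norm of $\textup{Hilb}_{E\otimes K_X}(v_z)$ is plurisubharmonic on $D'\times H^0(X,E\otimes K_X)^*$. Letting $D'\nearrow D$ and using that plurisubharmonicity is local finishes the proof.

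The conceptual heart is Berndtsson's theorem, which I would treat as a black box. The only genuinely technical point is the regularization step, namely producing smooth $v^j\searrow v$ with $\pi^*\eta+i\ddbar v^j>0$ over an exhaustion of $D$ so that the monotone convergence of the $L^2$-metrics can be invoked; this is routine given the fibrewise-Demailly-plus-mollification machinery already used in the paper, the one point to watch being that mollifying in the $D$-directions keeps the approximants decreasing (which holds because $\pi^*\eta$ has no component in those directions, so $v$ is in particular plurisubharmonic in the $D$-variable for each fixed point of $X$). Alternatively, one may invoke the version of Berndtsson's theorem valid for bounded, non-smooth weights directly and skip the approximation altogether.
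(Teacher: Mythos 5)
Your overall architecture is the same as the paper's: reduce to Berndtsson's direct-image theorem \cite{Brn09} for smooth semipositive weights, dualize to get Griffiths negativity (equivalently plurisubharmonicity of the dual norm on the total space, via Proposition~\ref{prop: Griff_Kob_negative_equiv}), then approximate the bounded weight $v$ by smooth decreasing weights and pass to the limit using that $\textup{Hilb}_{E\otimes K_X}(v^j_z)\nearrow \textup{Hilb}_{E\otimes K_X}(v_z)$ forces the dual norms to decrease to the dual of the limit. The limit step is fine. The genuine gap is in your regularization step. You claim that ``simultaneous Demailly approximation in the fibres $X$ together with mollification in the $D$-directions'' produces smooth $v^j\searrow v$ with $\pi^*\eta+i\ddbar v^j>0$ \emph{jointly} on $D'\times X$. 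That does not follow: fibrewise regularization controls only the complex Hessian in the $X$-directions, and mollification in $z$ controls only the $z$-directions; neither controls the mixed terms $\partial_z\bar\partial_x$, so there is no reason the approximants are $\pi^*\eta$-psh on the product. Note that where the paper uses this fibrewise-Demailly-plus-mollification device (in the proofs of Theorems~\ref{thm: extremal_PDE} and \ref{thm: deg_CMAE_cont_boundary_data}), it only needs \emph{fibrewise} positivity $\omega+i\ddbar v^k_z>0$ of boundary data, a strictly weaker conclusion than what you need here, since Berndtsson's hypothesis is semipositivity of the full curvature of the weight on the total space.

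Moreover, any genuine regularization of $\pi^*\eta$-psh functions on the product comes with a loss of positivity, because $\pi^*\eta$ is degenerate in the base directions; this loss must be absorbed before \cite[Theorem 1.2]{Brn09} can be applied. The paper handles exactly this point: it localizes ($D\subset\Bbb C^m$, legitimate since Griffiths negativity is local), invokes Blocki--Kolodziej \cite[Theorem 2]{BK07} to get smooth $v^j\searrow v$ that are only $\big(\pi^*\eta+\varepsilon_j(\omega_D+\pi^*\eta)\big)$-psh, and then, after normalizing $v\le 0$, replaces $v^j$ by $u^j:=\frac{v^j}{1+\varepsilon_j}+\frac{\varepsilon_j}{1+\varepsilon_j}|z|^2$, which is honestly $\pi^*\eta$-psh, smooth, and still decreases to $v$. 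Your proposal contains neither a correct regularization tool nor an absorption mechanism of this kind, and the strict inequality $\pi^*\eta+i\ddbar v^j>0$ you assert cannot hold in the base directions without adding a strictly psh function of $z$, which itself requires the localization you skip. Finally, the fallback ``invoke the version of Berndtsson's theorem valid for bounded, non-smooth weights directly'' is not available as a citation here: \cite{Brn09} assumes smooth metrics, and supplying the bounded-weight extension is precisely the content of this proposition, so it cannot be taken as a black box.
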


\begin{proof}
Since Griffiths negativity is a local condition, we can assume that $D\subset \Bbb C^m$. We view  $z\mapsto \textup{Hilb}_{E\otimes K_X}(v_z)$ as a metric on the trivial bundle $D\times H^0(X,E\otimes K_X) \to D$. To apply Berndtsson's result \cite[Theorem 1.2]{Brn09} we first need to approximate $v$ via \cite[Theorem 2]{BK07}. By this last result, after possibly further shrinking $D$, there exist $\varepsilon_j \searrow 0$ and $v^j\in \textup{PSH}(D\times X, \pi^*\eta+\varepsilon_j(\omega_{D}+\pi^*\eta))\cap C^\infty(D\times X)$ decreasing to $v$ in $D\times X$, where $\omega_{D}=i\ddbar |z|^2$ for $z\in D$. Without loss of generality, we can futher assume $v,v_j \leq 0$, therefore $$u^j:=\frac{v^j}{1+\varepsilon_j}+\frac{\varepsilon_j}{1+\varepsilon_j}|z|^2 \in \textup{PSH}(D\times X, \pi^*\eta)\cap C^\infty(D\times X),$$
and $u_j \searrow v$.  More importantly, according to \cite[Theorem 1.2]{Brn09}, $z\mapsto \textup{Hilb}_{E\otimes K_X}(u^j_z)$ is Griffiths positive, so the dual is Griffiths negative \cite[Chapter VII]{De12}, i.e., $\textup{Hilb}_{E\otimes K_X}(u^j_z)^*$ is psh on $D\times H^0(X,E\otimes K_X)$. Since the dual Hilbert map is monotone increasing, we get that $\textup{Hilb}_{E \otimes K_X}(v_z)^*$ is psh on $D\times H^0(X,E \otimes K_X)$, as desired.
\end{proof}

Now, we replace $(E,g)$ by $(L^k\otimes K^*_X,h^k\otimes \omega^n)$ which is positive for large $k$ since $\Theta(h^k\otimes \omega^n)=k\omega+\Ric \ \omega$ (for convenience, we will denote this latter (1,1)-form by $\eta_k$). Also, we note that for $u\in \textup{PSH}(X,\eta_k/k) \cap \textup{PSH}(X,\omega) \cap L^\infty(X)$, we have $H_k(u)=\text{Hilb}_{L^k}(ku)$. 

\begin{prop}\label{prop: quant_max_princ} Suppose that $u \in \textup{PSH}(D \times X, \pi^* \omega)\cap L^\infty(D \times X)$ and $\pi^* \omega + i\ddbar u \geq \varepsilon \pi^* \omega$ on $D \times X$ for some $\varepsilon >0$. Then there exists $k_0(\varepsilon)$ such that for all $k \geq k_0$ we have that $D \ni z \to H_k^*(u_z) \in \mathcal H^*_k$ is Griffiths negative. 
\end{prop}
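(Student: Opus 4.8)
The plan is to reduce this to Proposition~\ref{prop: bo}, applied to the positive line bundle $(E,g)=(L^k\otimes K_X^*,\,h^k\otimes\omega^n)$, whose Chern curvature is $\eta_k=k\omega+\Ric\,\omega$ and whose twist satisfies $E\otimes K_X=L^k$, so that $H^0(X,E\otimes K_X)^*=H^0(X,L^k)^*$. The bulk of the analytic work — Berndtsson's positivity theorem together with the regularization used to prove Proposition~\ref{prop: bo} — is thereby already in hand; what remains is a curvature computation that pins down the threshold $k_0(\varepsilon)$.

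Concretely, I would first use compactness of $X$ to fix $C>0$ with $\Ric\,\omega\geq -C\omega$, and set $k_0(\varepsilon):=\lceil C/\varepsilon\rceil$. Then for every $k\geq k_0$ the function $ku$ is bounded and satisfies
$$\pi^*\eta_k+i\ddbar(ku)=k\bigl(\pi^*\omega+i\ddbar u\bigr)+\pi^*\Ric\,\omega\;\geq\;(k\varepsilon-C)\,\pi^*\omega\;\geq\;0,$$
using the hypothesis $\pi^*\omega+i\ddbar u\geq\varepsilon\pi^*\omega$; hence $ku\in\textup{PSH}(D\times X,\pi^*\eta_k)\cap L^\infty(D\times X)$. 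Restricting the same inequality to a slice $\{z\}\times X$ shows that each $u_z$ lies in $\textup{PSH}(X,\eta_k/k)\cap\textup{PSH}(X,\omega)\cap L^\infty(X)$, so the identity $H_k(u_z)=\textup{Hilb}_{L^k}(ku_z)$ recorded before the statement applies.

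Finally I would invoke Proposition~\ref{prop: bo} with $v=ku$ and $\eta=\eta_k$: it yields that $D\ni z\mapsto \textup{Hilb}_{L^k}\bigl((ku)_z\bigr)^*=\textup{Hilb}_{L^k}(ku_z)^*$ is psh on $D\times H^0(X,L^k)^*$, i.e.\ Griffiths negative. By the previous paragraph this map equals $z\mapsto H_k(u_z)^*=H_k^*(u_z)$, which is therefore Griffiths negative, as desired. The only points demanding care are the twist bookkeeping — checking that the factor $K_X^*$ built into $E$ exactly cancels the $K_X$ appearing in Proposition~\ref{prop: bo}, so that one genuinely lands in the untwisted dual $H^0(X,L^k)^*$ — and verifying that $u_z$ lies in the precise intersection of $\omega$-psh classes needed to identify $H_k(u_z)$ with $\textup{Hilb}_{L^k}(ku_z)$; neither introduces any difficulty beyond what is already encapsulated in Proposition~\ref{prop: bo}.
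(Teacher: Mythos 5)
Your proposal is correct and follows essentially the same route as the paper: the same curvature computation $\pi^*\eta_k+i\ddbar(ku)\geq (k\varepsilon - C)\pi^*\omega$ with $\eta_k=k\omega+\Ric\,\omega$, the same slicewise identification $H_k(u_z)=\textup{Hilb}_{L^k}(ku_z)$, and the same application of Proposition \ref{prop: bo} to $(L^k\otimes K_X^*, h^k\otimes\omega^n)$. The only difference is that you make the threshold $k_0(\varepsilon)$ explicit, which the paper leaves implicit.
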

Note that Berndtsson works with the line bundles $L^k \otimes K_X$ in \cite{Brn09}, whereas in the above result we are dealing with $L^k$. As we will see, the condition $\pi^* \omega + i\ddbar u \geq \varepsilon \pi^* \omega$ is meant to remedy this discrepancy.

\begin{proof}
We have to show that, for large $k$, $ku_z\in \textup{PSH}(X,\eta_k)$ for $z\in D$, and $ku\in \text{PSH}(D\times X,\pi^* \eta_{k})$. 
To argue to first inclusion, $\eta_k+i\partial\bar{\partial}ku_z=k\omega +\Ric \omega +i\partial\bar{\partial}ku_z\geq k\varepsilon\omega+\Ric\omega$ which is positive for $k\geq k_0(\varepsilon)$. Similarly, to argue the second inclusion, we notice that 
$\pi^*\eta_k+i\partial\bar{\partial}ku \geq k\varepsilon\pi^*\omega+\pi^*\Ric \omega$ which is positive for $k\geq k_0(\varepsilon)$. As a result, by the discussion preceding the proposition and  Proposition \ref{prop: bo}, the map $z\mapsto H^*_k(u_z)=\text{Hilb}_{L^k}(ku_z)^*$ is Griffiths negative.
\end{proof}
For a usc function $f$ on $X$, we introduce $ P(f):=\sup\{h \in \textup{PSH}(X,\omega) \textup{ s.t. } h \leq  f\} \in \textup{PSH}(X,\omega)$ (see \cite{Brm13},\cite[Theorem A.7]{Da19}), which will be used in the proof of the next theorem.
\begin{theorem}\label{thm: C_0_quantization} Let $v \in C(\partial{D} \times X)$ such that $v_z \in \textup{PSH}(X,\omega), \ z \in \partial D$. The following hold:\\
\noindent (i) $\|FS_k^*(U^{\mathcal N,k}) - u \|_{C^0(D \times X)} \to 0$ as $k \to \infty$,\\
\noindent (ii) $\|FS_k^*(U^{\mathcal M,k}) - u \|_{C^0(D \times X)} \to 0$ as $k \to \infty$,\\
where $u$ is the solution to \eqref{eq: DR_equation}, and $U^{\mathcal N,k}/U^{\mathcal M,k}$ are the envelopes of Griffiths negative norms/metrics from \eqref{eq: Griff_extr_metric_norm_Perron_def}.
\end{theorem}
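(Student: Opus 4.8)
The plan is to prove both convergence statements by squeezing $FS_k^*$ of the two envelopes between two quantities that both converge to $u$: a lower bound coming from quantizing $u$ itself (via the Berndtsson positivity machinery of Propositions~\ref{prop: bo}--\ref{prop: quant_max_princ}), and an upper bound coming from the fact that $FS_k^*$ of \emph{any} Griffiths negative family is $\pi^*\omega$-psh (Proposition~\ref{prop: semi_classic_max_princ}) and hence dominated by the solution $u$ to the envelope problem \eqref{eq: DR_equation}. Since $U^{\mathcal N,k}\le U^{\mathcal M,k}$ and $FS_k^*$ is monotone increasing, it suffices to prove (i) the lower-bound convergence for $FS_k^*(U^{\mathcal N,k})$ and (ii) the upper-bound convergence for $FS_k^*(U^{\mathcal M,k})$; the two together pin down both envelopes.

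\textbf{Upper bound.} For any $V\in F_v^{\mathcal M,k}$, Proposition~\ref{prop: semi_classic_max_princ} gives $FS_k^*(V_z)\in\textup{PSH}(D\times X,\pi^*\omega)$. Using the classical Fubini--Study/Hilbert inequality $FS_k^*(H_k^*(w))\le w$ valid for $w\in\textup{PSH}(X,\omega)$ (the dual of the standard estimate $FS_k\circ H_k\le \mathrm{id}$, together with Lemma~\ref{lem: KRW-formula}; more precisely one uses $FS_k^*(H_k^*(w))\le w + C\log k/k$ from Tian-type expansions, but even the cruder uniform bound $FS_k^*(H_k^*(w))\le w+o(1)$ suffices), the boundary condition $\limsup_{z\to\partial D}V_z\le H_k^*(v)$ and monotonicity of $FS_k^*$ yield $\limsup_{(z',x')\to(z,x)\in\partial D\times X}FS_k^*(V_{z'})(x')\le v(z,x)+o(1)$. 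Hence $FS_k^*(V)$ is a competitor (up to an $o(1)$ shift) in the Perron family defining $u$, so $FS_k^*(U^{\mathcal M,k})\le FS_k^*(\text{-envelope})\le u+o(1)$ on $D\times X$. Combined with $FS_k^*(U^{\mathcal N,k})\le FS_k^*(U^{\mathcal M,k})$, this gives the upper half of both statements.

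\textbf{Lower bound.} Fix $\varepsilon>0$ and let $u^\varepsilon:=(1-\varepsilon)u+\varepsilon\,(\text{some fixed }\phi_0\in\mathcal H_\omega\text{-type potential})$, so that $\pi^*\omega+i\ddbar u^\varepsilon\ge \varepsilon\pi^*\omega$ on $D\times X$; by construction $u^\varepsilon\to u$ uniformly as $\varepsilon\to 0$, and after a harmless adjustment of the boundary data (replacing $v$ by a slightly smaller $v^\varepsilon$) we may assume $u^\varepsilon_z\le v_z$ on $\partial D$. Proposition~\ref{prop: quant_max_princ} then provides $k_0(\varepsilon)$ such that for $k\ge k_0$ the family $z\mapsto H_k^*(u^\varepsilon_z)$ is Griffiths negative; since $H_k^*$ is monotone increasing and $u^\varepsilon_z\le v_z$ on $\partial D$ we get $\limsup_{z\to\partial D}H_k^*(u^\varepsilon_z)\le H_k^*(v)$, so $z\mapsto H_k^*(u^\varepsilon_z)$ lies in $F_v^{\mathcal M,k}$. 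This forces $U^{\mathcal M,k}\ge H_k^*(u^\varepsilon)$, hence $FS_k^*(U^{\mathcal M,k})\ge FS_k^*(H_k^*(u^\varepsilon))\ge u^\varepsilon-C\log k/k\to u-o_\varepsilon(1)$ by the Bergman-kernel expansion (the $FS_k\circ H_k\ge \mathrm{id}-C\log k/k$ estimate of Tian--Zelditch--Catlin, valid uniformly for $\omega$-psh $u^\varepsilon$ with a uniform positivity lower bound). For the norm envelope one needs the same lower bound with $U^{\mathcal N,k}$ in place of $U^{\mathcal M,k}$: since $H_k^*(u^\varepsilon_z)$ is a genuine \emph{Hermitian} metric it is in particular a Finsler norm, so it lies in $F_v^{\mathcal N,k}$ as well, giving $U^{\mathcal N,k}\ge H_k^*(u^\varepsilon)$ and the same conclusion. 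Letting $\varepsilon\to 0$ after $k\to\infty$ completes the lower half.

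\textbf{Main obstacle.} The delicate point is the quantitative comparison between $FS_k^*\circ H_k^*$ and the identity, uniformly over the family $\{u^\varepsilon_z\}$: one needs a partial Bergman-kernel asymptotic that is uniform in the parameter $z\in\overline D$ and degrades controllably as $\varepsilon\to 0$ (the positivity lower bound $\varepsilon\pi^*\omega$ deteriorates). Handling the mere continuity (not smoothness) of $v$ requires first running the whole argument for the smooth approximants $v^k$ from the proof of Theorem~\ref{thm: deg_CMAE_cont_boundary_data}, using the uniform convergence $u^k\searrow u$ and the stability of all the Perron envelopes under uniform perturbation of boundary data, and only then passing to the limit. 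Matching the two one-sided bounds and checking that the $o(1)$ errors are genuinely uniform on $D\times X$ — in particular near $\partial D\times X$, where one must combine the boundary-value statements of Proposition~\ref{prop: bdry data} with the uniform continuity of $u$ up to the boundary from Theorem~\ref{thm: deg_CMAE_cont_boundary_data} — is the part that will take the most care.
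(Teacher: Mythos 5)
Your proposal is correct in substance and follows the same squeeze strategy as the paper: an upper bound via Proposition \ref{prop: semi_classic_max_princ} (the image under $FS_k^*$ of a Griffiths negative family is $\pi^*\omega$-psh, hence dominated by $u$ once its boundary values are controlled), and a lower bound obtained by quantizing a strictly positive perturbation of $u$ via Proposition \ref{prop: quant_max_princ}, noting that the resulting Hermitian family lies in both $F_v^{\mathcal N,k}$ and $F_v^{\mathcal M,k}$ and is therefore below both envelopes, then undoing the quantization with the Ohsawa--Takegoshi-type estimate; your reduction of (i)--(ii) to one bound for each envelope via $U^{\mathcal N,k}\le U^{\mathcal M,k}$ and monotonicity of $FS_k^*$ is also how the paper proceeds. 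The genuinely different ingredient is your construction of the perturbation: you take a convex combination $(1-\varepsilon)u+\varepsilon\phi_0$ (minus a small constant so the boundary values drop below $v$), which uses only the continuity of $u$ up to $\overline{D}\times X$ from Theorem \ref{thm: deg_CMAE_cont_boundary_data}; the paper instead fixes $\delta>1$, forms the fiberwise rooftop envelopes $v^\delta_z=P(\delta v_z)$ (whose joint continuity is Lemma \ref{lem: P_cont}), solves a second Dirichlet problem $u^\delta$, and uses $\tfrac1\delta u^\delta$ together with the candidate $v_z+(\delta-1)\inf v$ to compare with $u$. Your route avoids the operator $P$, Lemma \ref{lem: P_cont} and the auxiliary Dirichlet problem, and it does appear to work, at the modest price of the explicit constant shift; what it does not remove is the real quantitative ingredient, which you defer as the ``main obstacle'': the two one-sided comparisons between $FS_k^*\circ H_k^*$ and the identity. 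These are exactly Lemma \ref{lem: FS_k_H_k_convergence} in the paper, proved by a Cauchy/sub-mean-value estimate (giving $FS_k\circ H_k(v)\le v+C\log k/k+CM_v(1/k)$ for merely continuous $\omega$-psh $v$) and by Ohsawa--Takegoshi (giving $v-C/k\le FS_k\circ H_k(v)$ under $\omega_v\ge\delta\omega$); invoking Tian--Zelditch--Catlin expansions here is not quite right, since those require smooth, strictly positive potentials, whereas the boundary data is only continuous. Note also that the uniformity in $z$ you worry about is automatic, because the constants depend only on $(X,\omega)$, on the modulus of continuity of $v$ over the compact set $\partial D\times X$ (upper bound), and on the positivity constant $\varepsilon$ (lower bound); in particular no preliminary smoothing of $v$ as in the proof of Theorem \ref{thm: deg_CMAE_cont_boundary_data} is needed for this theorem.
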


\begin{proof} The proof of (i) and (ii) is exactly the same, so we only argue (i).
Without loss of generality we can assume that $v \leq 0$. We fix $\delta > 1$ momentarily. Given $z \in \partial D$, we denote $v^\delta_z := P(\delta v_z)\in \textup{PSH}(X,\omega)$. Lemma \ref{lem: P_cont} below implies that $(z,x) \to v^\delta(z,x):=v_z^\delta(x)$ is continuous. 

Now let $u^\delta \in \textup{PSH}(D \times X, \pi^* \omega)$ be the solution of \eqref{eq: DR_equation} with boundary data $v^\delta$. For elementary reasons, we have that $\pi^*\omega + i \partial \bar \partial \frac{1}{\delta} u^{\delta} \geq \frac{\delta -1}{\delta} \pi^* \omega$. In particular, Proposition \ref{prop: quant_max_princ} can be used to conclude that $z \to H_k^*(\frac{1}{\delta} u^\delta_z)$ is Griffiths negative for $k \geq k_0(\delta)$. By Proposition \ref{prop: bdry data} we obtain that $H_k^*(\frac{1}{\delta} u^\delta_z) \leq \tilde U^k_z, \ z \in \overline{D}$, where $z \to \tilde U^k_z$ is the Griffiths extremal norm assuming the boundary values $H_k^*(\frac{1}{\delta} v^\delta)$. Since $\frac{1}{\delta} v^\delta \leq v$, due to monotonicity of $H^*_k$, we obtain the following sequence of inequalities:
\begin{equation}\label{eq: quant_max_est}
H_k^*\Big(\frac{1}{\delta} u^\delta_z\Big) \leq \tilde U^k_z \leq U^k_z, \ z \in D. 
\end{equation}
By Lemma \ref{lem: FS_k_H_k_convergence} below, there exist $C:=C(X,\omega)>0$ and $k_0(\delta,X,\omega)$ such that, for $k\geq k_0$,  $\frac{1}{\delta} u^\delta_z- \frac{C}{k}  \leq FS^*_k \circ H_k^*\Big(\frac{1}{\delta} u^\delta_z\Big)$, which together with (\ref{eq: quant_max_est}) gives $$ u^\delta_z - \frac{C}{k} \leq \frac{1}{\delta} u^\delta_z- \frac{C}{k} \leq  FS^*_k (U^k_z),$$ where in the first estimate we used that $u^\delta \leq 0$, which is a consequence of $v^\delta \leq 0$, since $v \leq 0$. Notice that  $v_z+(\delta-1)\inf_{\partial D\times X}v$ is a candidate for $v^\delta_z = P(\delta v_z)$ for any $z \in \partial D$, so for $k\geq k_0(\delta, X,\omega)$ we have that
\begin{equation}\label{eq:lower bound}
  u_z+(\delta-1)\inf_{\partial D\times X}v- \frac{C}{k}\leq u^\delta_z- \frac{C}{k}\leq FS^*_k (U^k_z).   
\end{equation}
 
 On the other hand, $FS_k^*(U^k_z) = FS_k^* \circ  H_k^*(v_z)\leq v_z+ C \frac{\log(k)}{k} + C M_{v_z}\big(\frac{1}{k}\big)$ on $\partial D \times X$, where the second inequality follows from Lemma \ref{lem: FS_k_H_k_convergence} below.   Proposition \ref{prop: semi_classic_max_princ} now gives that  \begin{equation}\label{eq: semi_class_max_es}
FS_k^*(U^k_z) \leq u_z+ C \frac{\log(k)}{k} + C M_v\Big(\frac{1}{k}\Big), \ z \in D. 
\end{equation}
Combining (\ref{eq:lower bound}) and (\ref{eq: semi_class_max_es}), we get the desired uniform convergence. 
\end{proof}

\begin{lemma}\label{lem: P_cont} Suppose that $v \in C(\partial D \times X)$. Then $\partial D \times X \in (z,x)\mapsto P(v_z)(x)$ is continuous.
\end{lemma}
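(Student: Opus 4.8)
The plan is to establish the continuity of $(z,x) \mapsto P(v_z)(x)$ by combining uniform continuity of $v$ on the compact set $\partial D \times X$ with the monotonicity and Lipschitz-type stability of the envelope operator $P$. First I would record that since $\partial D \times X$ is compact, $v$ is uniformly continuous there; hence for every $\varepsilon > 0$ there is a neighborhood relation controlling $|v(z,x) - v(z',x')|$ when $(z,x)$ is close to $(z',x')$. In particular, for $z, z' \in \partial D$ close we get $\|v_z - v_{z'}\|_{C^0(X)} \leq \varepsilon$. The key elementary property of $P$ is order-preservation together with translation-equivariance under constants: if $f \leq g + c$ pointwise with $c$ a constant, then $P(f) \leq P(g) + c$, because $P(g) + c \in \textup{PSH}(X,\omega)$ and lies below $f$; symmetrically with the roles reversed. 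Applying this with $f = \delta v_z$, $g = \delta v_{z'}$, $c = \delta \varepsilon$ (recall the only use of this lemma is for $v^\delta_z = P(\delta v_z)$, though the statement as written is for $\delta = 1$, and the argument is identical), we obtain $\|P(v_z) - P(v_{z'})\|_{C^0(X)} \leq \|v_z - v_{z'}\|_{C^0(X)}$, i.e. $z \mapsto P(v_z) \in C(X)$ is (uniformly) continuous as a map into $C(X)$ equipped with the sup norm.

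Second, I would address joint continuity. Fix $(z_0, x_0)$ and let $(z,x) \to (z_0,x_0)$. Write
\[
|P(v_z)(x) - P(v_{z_0})(x_0)| \leq |P(v_z)(x) - P(v_{z_0})(x)| + |P(v_{z_0})(x) - P(v_{z_0})(x_0)|.
\]
The first term is at most $\|P(v_z) - P(v_{z_0})\|_{C^0(X)} \leq \|v_z - v_{z_0}\|_{C^0(X)} \to 0$ by the previous paragraph. For the second term one needs that $P(v_{z_0}) \in \textup{PSH}(X,\omega)$ is continuous on $X$; this holds because $v_{z_0} \in C(X)$, and the envelope of a continuous function with respect to a Kähler class is continuous — this is exactly the regularity statement cited in the text just before Theorem \ref{thm: C_0_quantization} (see \cite{Brm13}, \cite[Theorem A.7]{Da19}): $P(f) \in \textup{PSH}(X,\omega) \cap C(X)$ whenever $f \in C(X)$. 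Hence the second term tends to $0$ as $x \to x_0$, and joint continuity follows.

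The only genuinely non-routine input is the continuity of $P(f)$ for continuous $f$, which I am permitted to invoke as it is quoted in the excerpt; everything else reduces to the two-line comparison argument for $P$ and uniform continuity on a compact set, so I do not anticipate any real obstacle. If one wishes to be fully self-contained regarding $P(f) \in C(X)$, the standard route is to approximate $f$ uniformly by smooth functions $f_j$, note $|P(f_j) - P(f_k)| \leq \|f_j - f_k\|_{C^0}$ so $P(f_j)$ converges uniformly, and invoke that each $P(f_j)$ is continuous (indeed $C^{1,1}$) by the work of Berman–Demailly / Chu–Zhou, but citing \cite{Brm13} suffices here.
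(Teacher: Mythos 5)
Your proposal is correct and follows essentially the same route as the paper: establish continuity of $P(\chi)$ for continuous $\chi$ via the $C^{1,\bar 1}$ regularity of \cite{Brm13} plus uniform approximation, use the contraction property $\|P(v_z)-P(v_{z'})\|_{C^0(X)} \leq \|v_z - v_{z'}\|_{C^0(X)}$ coming from monotonicity and translation by constants, and conclude joint continuity by a triangle-inequality splitting. The only difference is cosmetic — you spell out the comparison argument behind the contraction estimate, which the paper leaves implicit.
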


\begin{proof} For $\chi \in C^2(X)$, it is well known that $P(\chi) \in C^{1,\bar 1}(X)$ \cite{Brm13} (see \cite[Theorem A.7]{Da19} for a self contained survey). Using uniform approximation by monotone smooth functions, one concludes that $P(\chi) \in C(X)$ if $\chi \in C(X)$. This implies that $P(v_z) \in C(X)$ for all $z \in \partial D$. 

Let $(z,x) \in \partial D \times X$ and let $\varepsilon > 0$ arbitary. For $\delta >0$ small enough we have that $|v_z - v_{z'}| \leq \varepsilon$ for $|z-z'| \leq \delta$. This immediately implies that $|P(v_z) - P(v_{z'})| \leq \varepsilon$. Also, after possibly shrinking $\delta$ (in a coordinate neighborhood of $x$) we have that $|x - x'| \leq \delta$ implies $|P(v)(z,x)-P(v)(z,x')| \leq \varepsilon$, due to continuity of $P(v_z)$. Putting everything together we obtain that
\begin{flalign*}
|P(v)(z,x) - P(v)(z',x')| \leq |P(v)(z,x) - P(v)(z,x')| + |P(v)(z,x') - P(v)(z',x')|\leq 2 \varepsilon,
\end{flalign*}
proving continuity of $P(v)$.
\end{proof}

Given $v \in C(X)$, let $M_v(r)= \sup\{|v(x)-v(y)|, \ x,y \in X \textup{ s.t. } d(x,y) \leq r\}$ be the ``modulus of continuity" for $v$, where $d(\cdot,\cdot)$ is the Riemannian distance associated to $\omega$.

\begin{lemma}\label{lem: FS_k_H_k_convergence} Suppose that $v \in \textup{PSH}(X,\omega) \cap C(X)$. Then there exists $C=C(X,\omega) >0$ and $k_0=k_0(X,\omega)$ such that 
$$FS_k \circ H_k(v) \leq v + C \frac{\log(k)}{k} + C M_v\Big(\frac{1}{k}\Big), \ \ k \geq k_0.$$
If in addition $\omega_v \geq \delta \omega$ for some $\delta >0$, then there exists $k_0=k_0(X,\omega,\delta)$ such that 
$$v - \frac{C}{k} \leq FS_k \circ H_k(v), \ \ k \geq k_0.$$
\end{lemma}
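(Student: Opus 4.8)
The plan is to reduce both inequalities to pointwise bounds on the Bergman function of the weighted space $\big(H^0(X,L^k),H_k(v)\big)$. Regard $h^k e^{-kv}$ as a (continuous, possibly non‑smooth) singular Hermitian metric on $L^k$, with curvature current $k\omega_v=k(\omega+i\ddbar v)\ge0$, and set
\[
B^v_k(x):=\sup\Big\{\,|s(x)|^2_{h^k e^{-kv}}\ :\ s\in H^0(X,L^k),\ \int_X|s|^2_{h^k e^{-kv}}\,\omega^n\le 1\,\Big\}.
\]
Since $h^k(s,s)=e^{kv}\,|s|^2_{h^k e^{-kv}}$, and the constraint $H_k(v)(s)\le1$ appearing in the definition of $FS_k$ is precisely $\int_X|s|^2_{h^k e^{-kv}}\omega^n\le1$, factoring $e^{kv(x)}$ out of the supremum defining $FS_k\circ H_k(v)(x)$ gives the identity $FS_k\circ H_k(v)(x)=v(x)+\tfrac1k\log B^v_k(x)$, $x\in X$. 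Thus the first inequality is equivalent to the upper bound $\tfrac1k\log B^v_k(x)\le C\tfrac{\log k}{k}+CM_v(1/k)$, and the second, under the extra hypothesis $\omega_v\ge\delta\omega$, is equivalent to the uniform lower bound $B^v_k(x)\ge e^{-C}$ on $X$.

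For the upper bound (which needs no curvature assumption) I would use the sub‑mean value inequality on coordinate balls of radius $1/k$. Fix a finite cover of $X$ by charts carrying smooth local potentials $\psi_\alpha$ of $h$, so that $i\ddbar\psi_\alpha=\omega$ and $\psi_\alpha+v$ is psh; let $\Lambda$ be a uniform Lipschitz bound for the $\psi_\alpha$, $\kappa$ a uniform comparison constant between the coordinate and Riemannian distances, and $k_0$ a degree beyond which $B(x,1/k)$ lies in a single chart for every $x$. In a trivialisation a competitor $s$ is represented by a holomorphic function $f$ with $|s|^2_{h^k e^{-kv}}=|f|^2e^{-k(\psi_\alpha+v)}$, and plurisubharmonicity of $|f|^2$ gives, with $r=1/k$,
\[
|s(x)|^2_{h^k e^{-kv}}\ \le\ \frac{C_n}{r^{2n}}\,e^{\,k\,[\,\sup_{B(x,r)}(\psi_\alpha+v)-(\psi_\alpha+v)(x)\,]}\int_{B(x,r)}|s|^2_{h^k e^{-kv}}\,\omega^n,
\]
where $C_n$ absorbs the comparison of Lebesgue measure with $\omega^n$. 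Since the integral is at most $1$ and $\sup_{B(x,r)}(\psi_\alpha+v)-(\psi_\alpha+v)(x)\le\Lambda r+M_v(\kappa r)$, taking the supremum over $s$ yields $B^v_k(x)\le C k^{2n}e^{kM_v(\kappa/k)}$; applying $\tfrac1k\log(\cdot)$ and the subadditivity $M_v(\kappa r)\le\lceil\kappa\rceil M_v(r)$ of the modulus of continuity on the geodesic space $(X,\omega)$ gives the first inequality, with $C,k_0$ depending only on $(X,\omega)$.

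For the lower bound, the hypothesis $\omega_v\ge\delta\omega$ means $h^k e^{-kv}$ has curvature current $k\omega_v\ge k\delta\omega$, which dominates $\omega$ once $k\ge1/\delta$. I would then invoke the Ohsawa--Takegoshi $L^2$-extension theorem, in the form valid for singular metrics with strictly positive curvature current and zero Lelong numbers (the local potential $k(\psi_\alpha+v)$ having zero Lelong numbers because $v$ is continuous), to extend at each $x_0\in X$ a $h^k e^{-kv}$-unit vector of $(L^k)_{x_0}$ to a section $s\in H^0(X,L^k)$ with $\int_X|s|^2_{h^k e^{-kv}}\,\omega^n\le C_0$, where $C_0$ depends only on $n$, on the curvature lower bound (which is $\ge\omega$), and on the geometry of $X$. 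Concretely $s=\chi\cdot(\text{local frame})-u$, with $u$ the $L^2$-minimal solution of $\dbar u=(\dbar\chi)\cdot(\text{local frame})$ for the Demailly--Ohsawa twisted weight, which near $x_0$ exceeds $k(\psi_\alpha+v)+2n\log|z-x_0|$; this simultaneously forces $u(x_0)=0$ and produces the constant $C_0$. Hence $B^v_k(x_0)\ge C_0^{-1}$ uniformly in $x_0$ and in $k\ge1/\delta$, so $\tfrac1k\log B^v_k(x_0)\ge-C/k$ with $C=\log C_0$, and the second inequality follows with $k_0=k_0(X,\omega,\delta)$. The main obstacle is exactly this uniformity: one must check that the Ohsawa--Takegoshi constant does not degenerate as $x_0$ and $k$ vary, nor because $v$ is merely continuous — the point being that only the curvature inequality $k\omega_v\ge\omega$ (valid for $k\ge1/\delta$) is used, which is stable under the mollification the $L^2$-estimate requires; by contrast, the upper bound is a soft sub‑mean value estimate presenting no essential difficulty.
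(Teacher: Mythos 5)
Your proposal is correct and follows essentially the same route as the paper: the upper bound via a sub-mean value (Cauchy-type) estimate on coordinate balls of radius $1/k$ together with the modulus of continuity of $v$, and the lower bound via a uniform Ohsawa--Takegoshi extension at an arbitrary point, using $k\omega_v\geq k\delta\omega$. The only cosmetic difference is that you phrase things through the Bergman function and sketch the uniformity of the extension constant directly, whereas the paper cites the ready-made uniform statement from \cite{DLR18}.
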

\begin{proof} 
For the first inequality, we adapt the argument of \cite[Proposition 4.2(iii)]{DLR18}. We fix $s \in H^0(X,L^k)$.  
We pick  $x \in X$, a coordinate neighborhood $B(x,\frac{2}{k})$, and a trivialization for $L$ on $B(x,\frac{2}{k})$. For $k \geq k_0(X,\omega)$, this is certainly possible near all $x \in X$. 
Using Cauchy's estimate we can start writing:
$$|s(x)|^2 \leq C k^{2n}\int_{B(x,\frac{1}{k})} |s(z)|^2,$$
for some absolute constant $C$. 
On $B(x,\frac{2}{k_0})$ we denote our Hermitian metric $h = e^{-\varphi}$ for some $\varphi \in C^\infty(B(x,\frac{2}{k_0}))$, and we note that there exists $C= C(X,\omega)$ such that $\sup_{B(x,\frac{1}{k})} \varphi - \varphi(x) \leq \frac{C}{k}, \ x \in X.$ Indeed, $C$ only depends on the Lipschitz constant of $\varphi$. 

Using the above estimate we can continue:
\begin{flalign*}
h^k(s(x),s(x))&=|s(x)|^2 e^{-k\varphi(x)} \leq C k^{2n} \frac{ e^{\sup_{B(x,\frac{1}{k})}k\varphi}}{ e^{k\varphi(x)}}\int_{B(x,\frac{1}{k})} h^k(s,s) \\
&\leq C k^{2n} e^{k \big(\sup_{B(x,\frac{1}{k})} v+ \sup_{B(x,\frac{1}{k})} \varphi - \varphi(x)\big)} \int_{X} h^k(s,s)e^{-kv}\omega^n. \\
&\leq C k^{2n} e^{k \big(v(x) + C M_v(\frac{1}{k}) + \frac{C}{k}\big)} \int_{X} h^k(s,s)e^{-kv}\omega^n.
\end{flalign*}
Consequently, the definition \eqref{eq: FS_k_usual} implies that $FS_k \circ H_k (v)(x) \leq v(x) + C \frac{\log k}{k} + C M_v\big(\frac{1}{k}\big)$, what we desired prove.

The second inequality is just a direct consequence of the Ohsawa--Takegoshi extension theorem \cite{OT87}. Indeed, fixing $x \in X$, by the version of this result in \cite[Theorem 2.11]{DLR18}, we have that for all $k \geq k_0(X,\omega,\delta)$ there exists $s \in H^0(X,L^k)$ such that $\int_X h^k(s,s) e^{-kv} \omega^n \leq C h^k(s,s)(x) e^{-kv(x)}.$ Naturally,  using \eqref{eq: FS_k_usual}, 
this implies that
$$v(x) \leq \frac{1}{k} \log \frac{h^k(s,s)}{\int_X h^k(s,s) e^{-kv} \omega^n}+ \frac{C}{k} \leq FS_k \circ H_k(v) + \frac{C}{k}.$$

\end{proof}

\footnotesize
\let\OLDthebibliography\thebibliography 
\renewcommand\thebibliography[1]{
  \OLDthebibliography{#1}
  \setlength{\parskip}{1pt}
  \setlength{\itemsep}{1pt}
}

\normalsize
\noindent{\sc University of Maryland}\\
{\tt tdarvas@math.umd.edu}\vspace{0.1in}\\
\noindent{\sc Purdue University}\\
{\tt wu739@purdue.edu}

\end{document}